\pgfplotsset{compat=1.18}
\newtheorem{theorem}{Theorem}[section]
\newtheorem{lemma}{Lemma}[section]
\newtheorem{definition}{Definition}
\newcommand{\Rmnum}[1]{\expandafter\@slowromancap\romannumeral #1@}
\journal{Discrete \& Computational Geometry}
\begin{document}
	
	\begin{frontmatter}

		\title{A High-Dimensional Extension of Wagner's Theorem and the Geometrization of Hypergraphs \\}
		%
		%
		%
		
		\author{
			Qiming Fang$^{a}$,
			Sihong Shao$^{b}$\\[6pt]
			{\small $^{a}$ Beijing International Center for Mathematical Research, Peking University, Beijing 100871, China}\\
			{\small $^{b}$ CAPT, LMAM and School of Mathematical Sciences, Peking University, Beijing 100871, China}
		}

		\begin{abstract}
			This paper introduces a geometric representation of hypergraphs by representing hyperedges as simplices. Building on this framework, we employ homotopy groups to analyze the topological structure of hypergraphs embedded in high-dimensional Euclidean spaces. 
				Under the assumptions of the triangulation and that all $i$-th homotopy groups are trivial for $i \leq d-2$, we provide  a necessary and sufficient condition for a $d$-uniform hypergraph to be embeddable in $\mathbb{R}^d$,
				which can be regarded as a kind of high-dimensional  extension of Wagner's Theorem for planar graphs.
			Specifically, we establish that a triangulated $d$-uniform topological hypergraph embeds into $\mathbb{R}^d$ if and only if it contains neither $K_{d+3}^d$ nor $K_{3,d+1}^d$ as a minor. Here, a triangulated $d$-uniform topological hypergraph constitutes a geometrized form of a $d$-uniform hypergraph, while $K_{d+3}^d$ and $K_{3,d+1}^d$ are the high-dimensional generalizations of the complete graph $K_5$ and the complete bipartite graph $K_{3,3}$ in $\mathbb{R}^d$, respectively.
		\end{abstract}

		\begin{keyword}
			Wagner's theorem \sep embedding \sep hypergraph \sep homotopy group \sep minor \sep the Hadwiger conjecture
		\end{keyword}
		
		
		

	\end{frontmatter}
	\noindent\textbf{MSC (2020):} 05C10

	
	\section{Introduction}
	
		The aim of this work is to establish a systematic correspondence between $d$-uniform hypergraphs and CW complexes, and to investigate the embedding problem of $d$-uniform hypergraphs in $\mathbb{R}^d$.  Section~\ref{1.1} presents the \emph{main result and motivation}, Section~\ref{1.2} provides a \emph{comparison with previous research}, and Section~\ref{1.3} outlines the \emph{organization of the paper}. Unlike previous studies most of which aim to identify as many forbidden minors as possible for the embedding, this work emphasizes the structures of hypergraphs, tries to keep the structure of the corresponding CW complexes as simple as possible
		and thus introduces two constraints: homotopy groups and triangulation.

	\subsection{Main result and motivation}\label{1.1}
	
	Wagner's theorem establishes that a finite graph is planar if and only if it contains neither $K_5$ nor $K_{3,3}$ as a minor, thereby characterizing the embeddability of graphs into $\mathbb{R}^2$. This raises a natural question: Does an analogous characterization exist for higher-dimensional Euclidean spaces?
	However, the complexity and non-intuitive nature of higher dimensions make it challenging to define appropriate generalizations of concepts such as planarity, complete graphs, and complete bipartite graphs. To address this, we first establish a correspondence between hyperedges and simplices, thereby endowing hypergraphs with a geometric structure.  
	For example, each edge in a graph (that is, a $2$-uniform hypergraph) is homeomorphic to a $1$-simplex. Similarly, we may regard each hyperedge of a $d$-uniform hypergraph as a $(d-1)$-simplex. 
	When studying the planar embedding problem of graphs, each edge is allowed to undergo topological deformation, provided that it does not intersect itself and that any two edges intersect only at their common vertices. Analogously, if we view each hyperedge in a $d$-uniform hypergraph as a $(d-1)$-simplex and investigate its embedding in $\mathbb{R}^d$, we should likewise allow each hyperedge to deform topologically under homeomorphism, as long as it does not self-intersect and any two hyperedges intersect only along their common $i$-faces (where $i \leq d-2$).


	Then, we utilize homotopy groups that characterize the topological structure of hypergraphs embedded in higher-dimensional spaces and define a class of hypergraphs embeddable into $\mathbb{R}^d$, termed $\mathbb{R}^d$-hypergraphs. Evidently, an $\mathbb{R}^d$-hypergraph serves as a higher-dimensional generalization of a planar graph. Correspondingly, we generalize the complete graph $K_5$ and the complete bipartite graph $K_{3,3}$ to $\mathbb{R}^d$, denoted by $K_{d+3}^d$ and $K_{3,d+1}^d$ (see Definition \ref{complete}-\ref{bipartite}), respectively. 
		Building on these definitions, we establish a necessary and sufficient condition for a $d$-uniform hypergraph to be embeddable in $\mathbb{R}^d$ under the assumptions of the triangulation and that all $i$-th homotopy groups are trivial for $i \leq d-2$. 
	Here, a triangulated $d$-uniform topological hypergraph refers to a geometrized $d$-uniform hypergraph, formal definitions are provided in Section~\ref{section5}.
	
	\begin{theorem}\label{anti-minor}
		A triangulated $d$-uniform topological hypergraph embeds into $\mathbb{R}^d$ if and only if it contains neither $K_{d+3}^d$ nor $K_{3,d+1}^d$ as a minor.
	\end{theorem}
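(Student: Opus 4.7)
The plan is to prove the ``only if'' direction by establishing closure of embeddability under minor operations, and the ``if'' direction by induction using the homotopy-group machinery set up in Sections~\ref{section3}--\ref{section5}. For the necessity direction, I would first show that the class of $d$-uniform topological hypergraphs embeddable in $\mathbb{R}^d$ is closed under the three minor operations (vertex deletion, hyperedge deletion, hyperedge contraction). Deletions are immediate from any given embedding; contraction requires shrinking a $d$-simplex to a point inside a small Euclidean neighborhood while perturbing its neighbors, which is where the triangulated hypothesis is essential. It then suffices to show that $K_{d+3}^d$ and $K_{3,d+1}^d$ themselves fail to embed in $\mathbb{R}^d$. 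For this I would follow a van Kampen--Flores style obstruction argument: construct an auxiliary map from the deleted product of the hypergraph to $S^{d-1}$, compute the induced class in $\pi_{d-1}(S^{d-1})\cong\mathbb{Z}$ via a linking-number or intersection count, and show that this class is nonzero --- the two forbidden hypergraphs being precisely the minimal configurations where this invariant is odd, directly generalizing the Euler-formula obstruction to $K_5$ and $K_{3,3}$ in the plane.

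The sufficiency direction is the main obstacle, and I would attack it by contradiction. Let $H$ be a minor-minimal counterexample: a triangulated $d$-uniform topological hypergraph avoiding both forbidden minors that does not embed in $\mathbb{R}^d$. I would first argue that $H$ must be suitably highly connected, since otherwise it splits along a low-dimensional simplicial separator into smaller pieces, each of which embeds by minor-minimality, and these partial embeddings can be amalgamated geometrically in $\mathbb{R}^d$ to produce an embedding of $H$. The remaining highly-connected case is handled by an incremental embedding procedure: fix an embedded $(d-1)$-dimensional skeleton lying in a hyperplane of $\mathbb{R}^d$, and attach the top-dimensional hyperedges one by one. At each stage, the minor-freeness assumption is meant to force the relevant obstruction class in the homotopy group of the complement to vanish, so that the next hyperedge can be realized as a geometric $d$-simplex without creating self-intersections.

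The principal difficulty is to verify that this incremental attachment actually terminates and never gets stuck, since classical Wagner relies on a concrete structural decomposition of $K_5$-minor-free graphs (into planar pieces and the Wagner graph $V_8$), which has no known high-dimensional analogue. My plan is to bypass this lack of structural classification by reducing each potential obstruction directly to one of the two forbidden minors: if the attachment of a hyperedge is blocked by a nonvanishing homotopy obstruction, I would trace back the topological configuration causing the block and perform a sequence of hyperedge contractions to expose either a $K_{d+3}^d$-minor or a $K_{3,d+1}^d$-minor, contradicting the hypothesis. Making this reduction exhaustive and algorithmic --- showing that \emph{every} possible obstruction can be traced to exactly one of these two minimal configurations --- is where the geometric framework of the paper does the real work and is the step most likely to require genuinely new ideas.
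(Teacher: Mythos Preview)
Your ``only if'' direction is workable, though the paper takes a more elementary route: rather than a van Kampen--Flores obstruction, it simply invokes the Jordan--Brouwer separation theorem to show directly that $K_{d+3}^d$ and $K_{3,d+1}^d$ cannot sit in $\mathbb{R}^d$ (Lemmas~\ref{comp} and~\ref{bipa}). Minor-closure then finishes as you say.

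The ``if'' direction, however, has a genuine gap, and the paper's argument is structurally quite different from your incremental-attachment plan. Your proposal to embed a $(d-1)$-skeleton in a hyperplane and attach top cells one at a time has no mechanism for the crucial step you yourself flag: showing that \emph{every} homotopy obstruction to attaching the next cell can be contracted down to one of exactly two local configurations. You acknowledge this as ``the step most likely to require genuinely new ideas,'' and indeed the paper does not attempt anything like it.

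Instead, the paper runs a direct high-dimensional analogue of Thomassen's proof of Wagner's theorem. After the reduction to the $(d+1)$-connected case (which you correctly anticipate, via the marked $S$-component machinery of Lemmas~\ref{10.33}--\ref{10.34}), the key tool is a contractible-edge lemma (Lemma~\ref{connected}): any $(d+1)$-connected graph on at least $d+2$ vertices has an edge $e=xy$ whose contraction preserves $(d+1)$-connectivity. One contracts $e$, applies induction to embed $H=G/e$, and then examines the contracted vertex $z$: its neighbours lie on a $(d-1)$-sphere $S^{d-1}$ bounding a face of $H'-z$ (Lemma~\ref{sphere}). The bridges $B_x$, $B_y$ of this sphere containing $x$ and $y$ must overlap (this is where ``triangulated'' is used), and a bridge-overlap dichotomy (Lemma~\ref{bridges}) forces them to be either skew or equivalent $(d+1)$-bridges. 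The first case produces a $K_{3,d+1}^d$-minor, the second a $K_{d+3}^d$-minor. No obstruction theory, no cell-by-cell attachment --- the two forbidden minors fall out of the two branches of a bridge classification, exactly as $K_5$ and $K_{3,3}$ do in the classical planar proof.
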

	
	Theorem~\ref{anti-minor} characterizes the embeddability of triangulated $d$-uniform hypergraphs into $\mathbb{R}^d$, reducing the problem of determining higher-dimensional embeddability to verifying the absence of specific minors. 
	It should be noted that, due to the requirement of triangulation, Theorem~\ref{anti-minor} cannot be regarded as a direct generalization of Wagner's theorem for planar graphs. In fact, it is obvious that Theorem~\ref{anti-minor} is weaker than Wagner's theorem in $\mathbb{R}^2$.
	

		Our idea originates from the proof of the Hadwiger conjecture for $t=5$. First, Wagner’s theorem established the connection between the Hadwiger conjecture (for $t=5$) and the Four Color Theorem. Then, Heesch introduced the \textit{Discharging Method} based on planar graphs, and finally, Appel and Haken proved the Four Color Theorem using this method, thereby directly implying the case $t=5$ of the Hadwiger conjecture.  It can be seen that the key to proving the Hadwiger conjecture for $t=5$ lies in the Discharging Method. Accordingly, the central idea of this paper is to generalize the Discharging Method to $\mathbb{R}^d$, thereby developing a new tool for studying the Hadwiger conjecture in higher dimensions. We observed that the reason for a planar graph to admit the Discharging Method is that each face of the planar graph is homeomorphic to a disk. If one can find similar structures in $\mathbb{R}^d$, the Discharging Method can be extended naturally to $\mathbb{R}^d$. This constitutes the main reason why we require $i$-th homotopy groups are trivial for $i \leq d-2$. To be more specfic, we focus on the geometrization of $d$-uniform hypergraphs with an additional triangulation condition. This additional requirement also arises from the observation that, under the triangulation, the number of forbidden minors becomes significantly smaller (since triangulation destroys the structure of many complicated minors \cite{carmesin2023embedding}). In a word, our primary motivation stems from the profound connection between hypergraph embeddings in high dimensions and the Hadwiger Conjecture. This connection, which constitutes a key direction for our future work, will be briefly discussed in the final section.

	In order to yield an intuitive structural understanding of hypergraphs embedded in higher-dimensional spaces before presenting the proof of Theorem~\ref{anti-minor}, 
	we introduce key concepts including simplices, CW complexes, skeleton, homotopy, fundamental groups, and the $n$-th homotopy group $\pi_n(X)$. Standard definitions and notations not explicitly stated here follow \cite{armstrong2013basic,bondy2008graph,hatcher2002algebraic}. Note: A topological space $X$ is $n$-connected if $\pi_i(X) = 0$ for all $i \leq n$. In contrast, a graph is $k$-connected if its vertex connectivity is at least $k$. These represent fundamentally distinct concepts. To avoid ambiguity, we will explicitly specify whether $n$-connected refers to a space or a graph in all subsequent usage. To provide intuitive insight, we briefly describe the structure of $d$-uniform hypergraphs embedded in $\mathbb{R}^d$, drawing an analogy to planar graphs. Ignoring cut edges, any planar graph admits an ear decomposition; equivalently, every $2$-connected planar graph can be viewed as a cycle with attached paths. Generalizing this structure to higher dimensions: Interpret each $(d-1)$-simplex as a hyperedge of a $d$-uniform hypergraph. View an embedding of such a hypergraph into $\mathbb{R}^d$ as a union of: A single $(d-1)$-dimensional sphere, and a collection of $(d-1)$-dimensional disks, intersecting only along their boundaries.

		\subsection{Comparison with previous research}\label{1.2}
		In order to compare the above main result with existing ones, below we first review related research results concerning the embedding problems of graphs in two-dimensional closed surfaces and of polytope and simplicial complexes in higher-dimensional Euclidean spaces. 
		
		\subsubsection{Two-dimensional closed surfaces}

	In two-dimensional closed surfaces the Ringel-Youngs theorem \cite{mohar2001graphs} states that $\chi(G)\leq \frac{1}{2} (7+\sqrt{49-24c'})$ for any graph $G$ embeddable in a surface $\Sigma$ of Euler characteristic $c'$.
	In 1979, Filotti et al. \cite{filotti1979determining} devised a polynomial-time algorithm with complexity $O(n^{\alpha k+\beta})$ to determine embeddability on orientable surfaces. This algorithm was subsequently optimized, culminating in Mohar's $O(n)$-time solution \cite{mohar1996embedding,mohar1999linear} in 1996. 
	
	For three-dimensional space, Bothe introduced the concept of linkless embedding \cite{sachs2006spatial} in 1973: an embedding of an undirected graph into $\mathbb{R}^3$ where no two cycles are linked. The related concept of knotless embedding \cite{conway1983knots} is defined analogously. 
	Cohen et al. \cite{cohen1997three} proved in 1995 that every finite graph embeds into $\mathbb{R}^3$. Since graphs are $1$-dimensional CW complexes, this result implies that studying graph embeddings into $\mathbb{R}^4$ or higher dimensions is trivial. 
	
	Consequently, investigating embedding problems for graphs or hypergraphs in higher-dimensional Euclidean spaces requires first representing them geometrically as higher-dimensional CW complexes. Motivated by this perspective, we initiate our study by geometrizing hypergraphs and subsequently establishing definitions and theorems for their embeddings in high-dimensional spaces.

		\subsubsection{Higher-dimensional Euclidean spaces}
		
		In higher-dimensional Euclidean spaces, previous research can be broadly divided into two main categories:  
		(i) the embedding problem of $n$-dimensional polyhedra in $\mathbb{R}^{2n}$;
		(ii) the embedding problem of $n$-dimensional polyhedra in $\mathbb{R}^{m}$ when $n+1 \leq m \leq 2n-1$.

		For example, Wagner presented a central conjecture in the field of embeddings of simplicial complexes in \cite{wagner2012minors}: If a $k$-dimensional simplicial complex $X$ embeds topologically into $\mathbb{R}^{2k}$, denoted $X \to \mathbb{R}^{2k}$, then 
		$f_k(X) \leq C_k \cdot f_{k-1}(X)$,
		where $C_k$ is a constant that depends only on $k$. 
		

		Melikhov showed in \cite{melikhov2011combinatorics} that every dichotomial cell complex is PL homeomorphic to a sphere. There exist exactly two 3-dimensional dichotomial cell complexes, whose 1-skeleta are $K_5$ and $K_{3,3}$, and exactly six 4-dimensional ones, whose 1-skeleta --- except one --- belong to the Petersen family. 
		Sarkaria showed in \cite{sarkaria1991one} that there exists a step-by-step embedding of $K^1$ into $\mathbb{R}^2$ if $o(K^1) = 0$. 
		In \cite{weber1967plongements}, Weber proved the following result: If $2m \ge 3(n + 1)$, then an $n$-dimensional polyhedron $K$ can be embedded in $\mathbb{R}^m$ if and only if there exists an equivariant map from the deleted product $K^*$ to the sphere $S^{m-1}$.  
		As a consequence, he showed that within the same range of dimensions, an $n$-dimensional polyhedron embeds in $\mathbb{R}^m$ if and only if it quasi-embeds in $\mathbb{R}^m$.  
		Furthermore, Segal et al. showed in \cite{segal1998embeddings} that for $m \ge \max(4, n)$, the dimensional restrictions in Weber's results are indeed necessary in all cases.  
		This leaves only two open cases, namely $m = 3$ with $n = 2$ or $3$, in the corresponding embedding problems.
		%
		In \cite{carmesin2023embedding}, Carmesin mainly studied the embedding problem of $2$-complexes in $3$-space, and proved that a simply connected, locally $3$-connected, $2$-dimensional simplicial complex $C$ embeds in $3$-space if and only if $C$ has no space restriction from an explicit list $\mathcal{Z}$. The list $\mathcal{Z}$ consists of cones over subdivisions of $K_5$ and $K_{3,3}$, five similar constructions to which we refer to as combined cones, and M\"{o}bius obstructions.

		\subsubsection{The difference between our result and existing ones}
		
		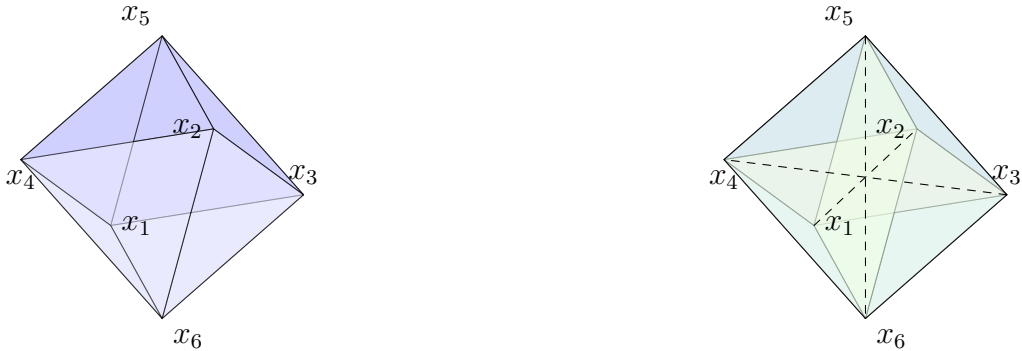
\begin{figure}[htbp]
			\centering
			\begin{subfigure}[t]{0.45\textwidth}
				\centering
				\tdplotsetmaincoords{70}{110}
				\begin{tikzpicture}[tdplot_main_coords, scale=2, line join=round, line cap=round]
					\coordinate (x1) at (1,0,0);
					\coordinate (x2) at (-1,0,0);
					\coordinate (x3) at (0,1,0);
					\coordinate (x4) at (0,-1,0);
					\coordinate (x5) at (0,0,1);
					\coordinate (x6) at (0,0,-1);
					
					\draw[fill=blue!20,opacity=0.7] (x1)--(x3)--(x5)--cycle;
					\draw[fill=blue!20,opacity=0.7] (x3)--(x2)--(x5)--cycle;
					\draw[fill=blue!20,opacity=0.7] (x2)--(x4)--(x5)--cycle;
					\draw[fill=blue!20,opacity=0.7] (x4)--(x1)--(x5)--cycle;
					
					\draw[fill=blue!10,opacity=0.6] (x1)--(x3)--(x6)--cycle;
					\draw[fill=blue!10,opacity=0.6] (x3)--(x2)--(x6)--cycle;
					\draw[fill=blue!10,opacity=0.6] (x2)--(x4)--(x6)--cycle;
					\draw[fill=blue!10,opacity=0.6] (x4)--(x1)--(x6)--cycle;
					
					\node[anchor=west] at (x1) {$x_1$};
					\node[anchor=east] at (x2) {$x_2$};
					\node[anchor=south] at (x3) {$x_3$};
					\node[anchor=north] at (x4) {$x_4$};
					\node[anchor=south east] at (x5) {$x_5$};
					\node[anchor=north west] at (x6) {$x_6$};
				\end{tikzpicture}
				\caption{Octahedron $P_1$ with faces $x_1x_3x_5$, $x_1x_4x_5$, $x_1x_3x_6$, $x_1x_4x_6$, $x_2x_3x_5$, $x_2x_4x_5$, $x_2x_3x_6$, and $x_2x_4x_6$.}
				\label{fig:p1}
			\end{subfigure}
			\hfill
			\begin{subfigure}[t]{0.45\textwidth}
				\centering
				\tdplotsetmaincoords{70}{110}
				\begin{tikzpicture}[tdplot_main_coords, scale=2, line join=round, line cap=round]
					\coordinate (x1) at (1,0,0);
					\coordinate (x2) at (-1,0,0);
					\coordinate (x3) at (0,1,0);
					\coordinate (x4) at (0,-1,0);
					\coordinate (x5) at (0,0,1);
					\coordinate (x6) at (0,0,-1);
					
					\draw[fill=blue!20,opacity=0.7] (x1)--(x3)--(x5)--cycle;
					\draw[fill=blue!20,opacity=0.7] (x3)--(x2)--(x5)--cycle;
					\draw[fill=blue!20,opacity=0.7] (x2)--(x4)--(x5)--cycle;
					\draw[fill=blue!20,opacity=0.7] (x4)--(x1)--(x5)--cycle;
					
					\draw[fill=blue!10,opacity=0.6] (x1)--(x3)--(x6)--cycle;
					\draw[fill=blue!10,opacity=0.6] (x3)--(x2)--(x6)--cycle;
					\draw[fill=blue!10,opacity=0.6] (x2)--(x4)--(x6)--cycle;
					\draw[fill=blue!10,opacity=0.6] (x4)--(x1)--(x6)--cycle;
					
					\draw[fill=red!10,opacity=0.6] (x1)--(x3)--(x2)--(x4)--cycle;
					\draw[fill=yellow!10,opacity=0.6] (x1)--(x5)--(x2)--(x6)--cycle;
					\draw[fill=green!10,opacity=0.6] (x3)--(x5)--(x4)--(x6)--cycle;
					
					\draw[dashed] (x1)--(x2);
					\draw[dashed] (x3)--(x4);
					\draw[dashed] (x5)--(x6);
					
					\node[anchor=west] at (x1) {$x_1$};
					\node[anchor=east] at (x2) {$x_2$};
					\node[anchor=south] at (x3) {$x_3$};
					\node[anchor=north] at (x4) {$x_4$};
					\node[anchor=south east] at (x5) {$x_5$};
					\node[anchor=north west] at (x6) {$x_6$};
				\end{tikzpicture}
				\caption{Polyhedron $P_2$, obtained by adding three squares ($x_1x_3x_2x_4$, $x_1x_5x_2x_6$, $x_3x_5x_4x_6$) to $P_1$.}
				\label{fig:p2}
			\end{subfigure}
			
			\caption{The key difference between this paper and previous studies lies in the treatment of $P_2$.
				Although $P_1$ and $P_2$ share the same $1$-skeleton, namely the complete tripartite graph $K_{2,2,2}$, $P_1$ can be embedded in $\mathbb{R}^3$, while $P_2$ cannot \cite{carmesin2023embedding}.
				Thus previous studies regard $P_2$ as a forbidden minor in $\mathbb{R}^3$. Below we are able to show that the triangulation of $P_2$ yields $K_6^3$: 
				First, each of the three squares $x_1x_3x_2x_4$, $x_1x_5x_2x_6$, and $x_3x_5x_4x_6$ must be subdivided into $2$-simplices.  
				Since we focus on simple hypergraphs --- where multiple edges are not exist --- the triangulation must be carried out as follows:
				(i) Add the edge $x_1x_2$ within $x_1x_3x_2x_4$, subdividing it into two $2$-simplices: $x_1x_2x_3$ and $x_1x_2x_4$;
				(ii) Add the edge $x_5x_6$ within $x_1x_5x_2x_6$, subdividing it into two $2$-simplices: $x_1x_5x_6$ and $x_2x_5x_6$;
				(iii) Add the edge $x_3x_4$ within $x_3x_5x_4x_6$, subdividing it into two $2$-simplices: $x_3x_4x_5$ and $x_3x_4x_6$.
				Next, we further add the $2$-simplices $x_1x_2x_5$, $x_1x_2x_6$, $x_3x_5x_6$, $x_4x_5x_6$, $x_1x_3x_4$, and $x_2x_3x_4$ to $P_2$.  
				Let $P_2^\Delta$ denote the resulting triangulated complex. It is straightforward to verify that every polyhedral $3$-cell in $P_2^\Delta$ is a tetrahedron.  
				Moreover, for any three vertices $\{x_i, x_j, x_k\}\subseteq \{x_1, x_2, x_3, x_4, x_5, x_6\}$, there exists a $2$-simplex $x_ix_jx_k$ in $P_2^\Delta$.  
				Therefore, $P_2^\Delta$ corresponds precisely to $K_6^3$ as described in Theorem~\ref{anti-minor}. }
			\label{fig:comparison}
		\end{figure}

		Most previous studies were conducted from the perspective of complexes or polytope, focusing primarily on their classification and structural properties and aiming to identify as many forbidden minors as possible for the embedding. To achieve this, they usually considered the worst case by adding as many faces as possible while keeping the $1$-skeleton unchanged (see e.g. \cite{carmesin2023embedding}) and hardly raised any restrictions on the homotopy groups. However, 
		this paper tris to keep the structure of the CW complexes as simple as possible and thus introduces two structural constraints --- homotopy groups (Definition~\ref{constraint}) and triangulation (Definition~\ref{triangulated-topological-hypergraph}) --- to simplify the CW complexes. We indeed show that, under the triangulation condition, all forbidden minors in $\mathbb{R}^d$ are transformed into either $K_{d+3}^d$ or $K_{3,d+1}^d$. Within this framework, we establish a necessary and sufficient condition for determining whether a triangulated $d$-uniform topological hypergraph can be embedded into $\mathbb{R}^d$. Figure~\ref{fig:comparison} gives an example in $\mathbb{R}^3$ and shows how a forbidden minor in previous studies can be transformed into $K_6^3$ through triangulation. In higher-dimensional settings, forbidden minors similarly reduce to this simple form after triangulation. Furthermore, the use of homotopy group constraints and triangulation may reduce the number of forbidden minors, which aids in solving graph coloring problems. Our subsequent research will focus on bounding the chromatic number via high dimensional embedding.

	\subsection{Organization of the paper}\label{1.3}
	
	The rest of this paper is organized as follows. Section~\ref{section3} mainly talks about the definitions of hypergraphs that can be embedded in $\mathbb{R}^d$. In Section~\ref{section5}, we give the definitions of closed $\mathbb{R}^d$-hypergraph and triangulated $\mathbb{R}^d$-hypergraph. In Sections~\ref{section-Br} and \ref{section-ED}, we prepare the groundwork for proving Theorem~\ref{anti-minor}, and the proof is laid out in Section~\ref{recognizing}. Section~\ref{FW} briefs the significance of this work and outlines some exploring directions.

	%
	%
	%
	%

	\section{$\mathbb{R}^d$-hypergraph: Definition and property}
	\label{section3}
	
	Topologically, a planar graph is well understood as a finite union of $1$-dimensional spheres (cycles) and $1$-dimensional balls (paths), forming a structure embeddable in $\mathbb{R}^2$. A natural generalization involves considering a finite union of $(d-1)$-dimensional spheres and $(d-1)$-dimensional balls. If such a structure embeds into $\mathbb{R}^d$, it constitutes a higher-dimensional analogue of a planar graph. To formalize this, we introduce algebraic topology tools for concise description. By interpreting $(d-1)$-dimensional simplices as hyperedges of a $d$-uniform hypergraph, we extend planar graphs to higher dimensions. Just as line segments ($1$-simplices) in planar graphs permit topological deformation, we allow analogous flexibility for hyperedges embedded in $\mathbb{R}^d$.

		\begin{definition}[faces of a $k$-simplex]
			Let $\sigma = [v_0, v_1, \dots, v_k]$ be a $k$-simplex, that is, the convex hull of $k+1$ affinely independent points in $\mathbb{R}^d$. 
			For any nonempty subset $I \subseteq \{0,1,\dots,k\}$, the convex hull of $\{v_i \mid i \in I\}$ is called an \emph{$r$-face} of $\sigma$, where $r = |I|-1$. 
			
			In particular,
			\begin{itemize}
				\item $0$-faces are called \emph{vertices};
				\item $1$-faces are called \emph{edges};
				\item $2$-faces are called \emph{triangular faces} (or simply \emph{faces});
				\item $(k-1)$-faces are called \emph{facets}.
			\end{itemize}
			The simplex $\sigma$ itself is regarded as its unique $k$-face.
		\end{definition}

	Building upon the aforementioned definitions, we proceed to geometrize $d$-uniform hypergraphs in order to examine their embedding properties in high-dimensional spaces. 
	
	\begin{definition}[{\it $d$-uniform-topological hypergraph}]\label{topological-hypergraph}
		Let $H = (X,E)$ be a $d$-uniform hypergraph. If each hyperedge $e$ of $H$ is regarded as a $(d-1)$-simplex, each vertex $v\in V(e)$ is regarded as a $(d-2)$-face of $e$, we refer to such a hypergraph as a {\it $d$-uniform-topological hypergraph}. 
		
	\end{definition}
	
	Since every $(d-1)$-simplex contains $d$ $(d-2)$-faces, each of which is a $(d-2)$-simplex, it follows that any $d$-uniform hypergraph can be geometrized. 
	According to the above definition, after geometrization, each hyperedge $e\in E(H)$ becomes a $(d-1)$-simplex, while each vertex $v\in V(e)$ becomes a $(d-2)$-face of $e$. 
	
	It should be noted that a vertex in a $d$-uniform hypergraph corresponds to a $(d-2)$-simplex in the associated $d$-uniform topological hypergraph.  
		In contrast, a vertex in a $d$-uniform topological hypergraph actually represents a $0$-face.  
		To avoid ambiguity, throughout the remainder of this paper, all vertices mentioned refer exclusively to $0$-faces.

		By analogy with the definition of an induced subhypergraph, we define an \emph{induced sub-$d$-uniform-topological hypergraph} as follows. 
		
		\begin{definition}[induced sub-$d$-uniform-topological hypergraph]
			Let $H = (V, E)$ be a $d$-uniform-topological hypergraph, where $V$ is the vertex set and $E \subseteq 2^V$ is the set of hyperedges. 
			For a subset of vertices $U \subseteq V$, the \emph{induced sub-$d$-uniform-topological hypergraph} of $H$ on $U$, denoted by $H[U]$, is defined as $H[U] = (U, E_U)$, where $E_U = \{\, e \in E \mid e \subseteq U \,\}$.
			In other words, $H[U]$ consists of all hyperedges of $H$ that are entirely contained in $U$. 
		\end{definition}

	\begin{definition}[homotopy group constraint]\label{constraint}
		Let $T_1, T_2, ...,T_m$ be $(d-1)$-simplices, $K= \bigcup \limits_{i = 1}^m T_i$ be a $d$-uniform-topological hypergraph. If $K$ satisfies the following conditions, we say that $K$ satisfies the \emph{homotopy group constraint}.  
		
		\begin{itemize}
			\item If $T_i$ and $T_j$ are $(d-1)$-simplices in $\{T_1, T_2, ...,T_m\}$ and $T' = T_i\cap T_j \neq \emptyset$, then $T'$ must be a face of both $T_i$ and $T_j$. 
			\item The $i$-th homotopy group of $K$ is trivial for $i\in \{1,2,3,...,d-2\}$. 
		\end{itemize}
		
	\end{definition}
	
		{\bf Remark.} Unless otherwise stated, all $d$-uniform topological hypergraphs considered in this paper are assumed to satisfy the homotopy group constraint. 
	
	In planar graphs, each face is homeomorphic to a $2$-ball.  
	Similarly, a $d$-uniform topological hypergraph that satisfies the homotopy group constraint exhibits an analogous structure, we will prove it can be regarded, up to isomorphism, as a union of internally disjoint $(d-1)$-spheres and $(d-1)$-balls in the following part. 
	Note that {\it internally disjoint} means that their interiors do not overlap and they intersect only at their boundaries. (An {\it $i$-sphere} $S^i$ is a topological space that is homeomorphic to a standard $i$-sphere. The space enclosed by an $i$-sphere is called an {\it $(i+1)$-ball} or {\it $(i+1)$-disc} $B^{i+1}$.)

	\begin{definition}[{\it $\mathbb{R}^d$-hypergraph} and {\it non-$\mathbb{R}^d$-hypergraph}] \label{special0}
		If a $d$-uniform-topological hypergraph $K$ can be embedded in $\mathbb{R}^d$, then $K$ is called an {\it $\mathbb{R}^d$-hypergraph}; otherwise, $K$ is called a {\it non-$\mathbb{R}^d$-hypergraph}. 
	\end{definition}
	
	According to Definition \ref{topological-hypergraph} and Definition \ref{special0}, we know that an $\mathbb{R}^d$-hypergraph is a special type of $d$-uniform-topological hypergraph, a $d$-uniform-topological hypergraph may not necessarily be embeddable in $\mathbb{R}^d$. 
	Note that the generalized Poincar\'{e} conjecture ensures that the $\mathbb{R}^d$-hypergraph can always be embedded in the $d$-sphere.

	

	In the process of triangulating a planar graph, it is often necessary to add some edges. However, in higher dimensions, such operations become much less intuitive. It is easy to observe that contracting simplices in an $\mathbb{R}^d$-hypergraph does not alter its homotopy groups. Nevertheless, when adding or removing simplices from an $\mathbb{R}^d$-hypergraph, one must impose certain constraints to ensure that the homotopy type remains unchanged. Therefore, we need to establish the following lemma.

	\begin{lemma} [{\it Construction of $\mathbb{R}^d$-hypergraph}] \label{special}
		Every $\mathbb{R}^d$-hypergraph can be constructed by the following procedure. 
		
		{\bf Procedure \uppercase\expandafter{\romannumeral10}:} $T_1, T_2, ...,T_m$ are $(d-1)$-simplices in $\mathbb{R}^d$. Starting from $T_0$, add $T_1, T_2, ..., T_m$ in $\mathbb{R}^d$ one by one, and this procedure satisfies the following conditions: 
		
		\begin{itemize}
			\item 1. Let $T_i$ and $T_j$ be arbitrary simplex in $\{T_1, T_2, ...,T_m\}$ and $T' = T_i\cap T_j\neq \emptyset$, then $T'$ is a face of both $T_i$ and $T_j$.
			
			\item 2. Suppose the procedure is at step $i$ ($T_i$ has been added in $\mathbb{R}^d$). 
			Let $K_i= \bigcup \limits_{j = 0}^i T_j$, 
			then $T_{i+1}$ satisfies the following condition when adding $T_{i+1}$ to $\mathbb{R}^d$: $T_{i+1} \cap K_i \cong D^{d-1}$ or $T_{i+1} \cap K_i \cong S^{d-1}$. 
			
		\end{itemize}

	\end{lemma}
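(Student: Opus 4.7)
The plan is to proceed by strong induction on the number $m+1$ of simplexoids in $K = \bigcup_{i=0}^m T_i$. The base case $m=0$ is trivial, since a single simplexoid $T_0$ satisfies Procedure I vacuously. For the inductive step, given an $\mathbb{R}^d$-hypergraph $K$ with $m+1 \geq 2$ simplexoids, I would isolate a \emph{terminal} simplexoid $T^\ast \in \{T_0, \ldots, T_m\}$ such that (a) $K' := K \setminus \operatorname{int}(T^\ast)$, suitably interpreted, is again an $\mathbb{R}^d$-hypergraph; and (b) the intersection $T^\ast \cap K'$ is homeomorphic to $D^{d-1}$ or to $S^{d-1}$. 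After finding such a $T^\ast$, I would relabel it as $T_m$, apply the induction hypothesis to $K'$ to obtain a valid ordering $T_0, T_1, \ldots, T_{m-1}$, and append $T^\ast$ at the end to complete Procedure I for $K$.

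The crux is constructing $T^\ast$. My strategy is to exploit the promised structural description of an $\mathbb{R}^d$-hypergraph as a union of internally disjoint $(d-1)$-dimensional spheres and $(d-1)$-dimensional discs (announced between Definitions~\ref{special0} and \ref{special}). Using this decomposition together with the embedding of $K$ in $\mathbb{R}^d$, I would argue that there exists a component $C$ of the decomposition containing at least one simplexoid whose removal detaches it cleanly. Concretely: if $C$ is a $(d-1)$-disc attached to the remainder along its $(d-2)$-sphere boundary, I peel off a simplexoid meeting the rest of $K$ in a sub-simplexoid, whose shape is forced by Condition~1 to be a face, giving the $D^{d-1}$ alternative; if $C$ is a $(d-1)$-sphere and $T^\ast$ is the last simplexoid completing it, then $T^\ast \cap K'$ bounds the open face of $T^\ast$, which supplies the $S^{d-1}$ alternative.

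The homotopy-group hypothesis is essential for verifying step (a): one must confirm that $\pi_i(K') = 0$ for $1 \leq i \leq d-2$. Since attaching $T^\ast$ to $K'$ glues a $(d-1)$-ball along $D^{d-1}$ or $S^{d-1}$, a van Kampen / Mayer--Vietoris calculation (or equivalently, a cofibration argument on the pair $(K, K')$) shows that removing $T^\ast$ does not create new nontrivial homotopy below dimension $d-1$, so $K'$ inherits the required vanishing from $K$. Condition~1 for $K'$ is automatic, inherited from $K$.

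The main obstacle I anticipate is Step (b): rigorously guaranteeing that among the $m+1$ simplexoids some $T^\ast$ has intersection with the rest of exactly one of the two prescribed topological types. The delicate point is the interplay between the combinatorial restriction (intersections are sub-simplexoids, forcing a CW-style gluing) and the topological restriction (the intersection has the homotopy type of a $(d-1)$-ball or $(d-1)$-sphere). I expect to need a general-position or piecewise-linear topology argument, leveraging that $K$ lives as a codimension-one subset of $\mathbb{R}^d$ and that the bounded complementary regions are topological $d$-balls by a Jordan--Brouwer-type separation property. This, combined with the fact that a $(d-1)$-sphere in $\mathbb{R}^d$ is forced to bound a $d$-ball, should funnel the possible intersection types down to exactly $D^{d-1}$ and $S^{d-1}$, completing the induction.
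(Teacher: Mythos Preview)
Your route differs substantially from the paper's. The paper's proof is a single sentence: it observes that attaching a $(d-1)$-simplexoid amounts to gluing a cell along a ball or sphere, and then invokes Lemmas~\ref{attach} and~\ref{attach2}, which use the long exact sequence of the pair $(X\cup Y,X)$ to show that such attachments preserve $\pi_k=0$ for $k\le d-2$. In effect the paper verifies only that Procedure~X \emph{outputs} objects with the required homotopy vanishing; it does not separately argue that \emph{every} $\mathbb{R}^d$-hypergraph admits such an ordering of its simplexoids. Your inductive peeling argument is aimed squarely at that harder direction, and the homotopy step~(a) you outline is essentially the same long-exact-sequence computation as in Lemmas~\ref{attach}--\ref{attach2}, run in reverse.

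That said, there is a genuine gap at step~(b). You plan to locate the terminal simplexoid $T^\ast$ by appealing to the ``union of internally disjoint $(d-1)$-spheres and $(d-1)$-discs'' description of an $\mathbb{R}^d$-hypergraph, but in the paper that description is only \emph{announced} in the paragraph preceding Lemma~\ref{special}; its justification (for the closed case) comes afterwards via Lemma~\ref{structure} and the hyper-ear machinery of Section~\ref{section-ED}. Invoking it here is circular. Even if one grants the decomposition, your claim that a simplexoid peeled from a disc component meets the rest of $K$ in a \emph{single} sub-simplexoid is not correct in general: a simplexoid lying in the interior of such a disc meets several neighbours, and Condition~1 constrains only \emph{pairwise} intersections $T_i\cap T_j$, not the union $T^\ast\cap K'$, so the topological type of that union is not forced to be a ball. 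This is precisely the obstacle you flag at the end, and it is not resolved; without it the induction does not close.
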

	
	\begin{proof}
		
		Lemma \ref{attach}-\ref{attach2} directly implies Lemma \ref{special}.
		
	\end{proof}

	\begin{lemma}\label{attach}
		Let $X$ be a $d$-dimensional CW complex satisfying $\pi_k(X) = 0$ for all $k \leq d-1$. Let Y be a $d$-dimensional simplex and $X \cap Y \cong S^{d-1}$, i.e., it is homeomorphic to the $(d-1)$-dimensional sphere. 
		Then: $\pi_k(X \cup Y) = 0$ for all $k \leq d-1.$
		
	\end{lemma}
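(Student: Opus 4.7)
The plan is to view $X\cup Y$ as the adjunction space obtained from $X$ by attaching a single $d$-cell. Since $Y$ is a $d$-simplex it is homeomorphic to $D^d$ with boundary $S^{d-1}$, and the hypothesis $X\cap Y\cong S^{d-1}$ is to be read as saying that this intersection is precisely $\partial Y$. Consequently $X\cup Y = X\cup_{\varphi}D^d$, where $\varphi\colon S^{d-1}\to X$ is the inclusion of $\partial Y$. Replacing $\varphi$ by a cellular approximation (which does not change the homotopy type of the adjunction space, because $S^{d-1}\hookrightarrow D^d$ is a cofibration) promotes $X$ to a subcomplex of $X\cup Y$ and adds only one new cell, of dimension $d$; in particular the $k$-skeleton of $X\cup Y$ equals the $k$-skeleton of $X$ for every $k\le d-1$.

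With this CW structure in place, I would take an arbitrary map $f\colon S^k\to X\cup Y$ with $k\le d-1$ and invoke the cellular approximation theorem to homotope $f$ into the $k$-skeleton, which is contained in $X$. Since $\pi_k(X)=0$ by hypothesis, $f$ is null-homotopic inside $X$, hence inside $X\cup Y$. The same argument applied to null-homotopies (of dimension at most $d-1<d$) shows that no new relations on $\pi_k$ are created by the attaching map for $k\le d-2$, while for $k=d-1$ the attaching class $[\varphi]\in\pi_{d-1}(X)$ is already trivial. This yields $\pi_k(X\cup Y)=0$ for all $k\le d-1$.

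The main obstacle is the technical justification that cellular approximation applies here: a priori $\partial Y$ need not sit inside $X$ as a subcomplex, so one must either replace $\varphi$ by a cellular approximation as above, or bypass CW structures entirely by running the argument through the long exact homotopy sequence of the pair $(X\cup Y,X)$. In the latter approach, excision together with the Hurewicz theorem applied to $(D^d, S^{d-1})$ shows that $(X\cup Y,X)$ is $(d-1)$-connected, and combining this with $\pi_k(X)=0$ for $k\le d-1$ gives the conclusion directly. Either route is routine; the only delicate point is confirming that the hypothesis $X\cap Y\cong S^{d-1}$ forces the intersection to be $\partial Y$, so that the cofibration $S^{d-1}\hookrightarrow D^d$ genuinely governs the adjunction and the standard cell-attachment machinery becomes available.
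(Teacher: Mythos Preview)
Your proposal is correct. The paper in fact takes the second of the two routes you sketch: it runs the long exact sequence of the pair $(X\cup Y,X)$ and identifies $\pi_k(X\cup Y,X)\cong\pi_k(D^d,S^{d-1})=0$ for $k\le d-1$ (the paper phrases the identification as a deformation retraction of pairs rather than via excision/Hurewicz, but the content is the same). Your primary argument via cellular approximation is a genuinely different route: after replacing the attaching map by a cellular one you obtain a CW structure on $X\cup Y$ whose $(d-1)$-skeleton coincides with that of $X$, and then any $S^k\to X\cup Y$ with $k\le d-1$ can be pushed into $X$ where it dies by hypothesis. This avoids relative homotopy groups and the long exact sequence entirely, at the cost of the technical step of justifying the homotopy equivalence $X\cup_\varphi D^d\simeq X\cup_{\varphi'}D^d$ via the cofibration property. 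The paper's approach is shorter and more conceptual; yours is more elementary in that it uses only the cellular approximation theorem and the absolute groups $\pi_k(X)$. Your caveat that one must read $X\cap Y\cong S^{d-1}$ as $X\cap Y=\partial Y$ is well taken; the paper simply asserts this identification at the outset of its proof.
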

	
	\begin{proof}
		Since $Y \cong D^d$ and $X \cap Y \cong S^{d-1}$ is the boundary of $Y$, the pair $(X \cup Y, X)$ deformation retracts onto the pair $(Y, S^{d-1})$. Therefore,
		\[
		\pi_k(X \cup Y, X) \cong \pi_k(D^d, S^{d-1}).
		\]
		
		It is a standard fact that the relative homotopy groups satisfy
		\[
		\pi_k(D^d, S^{d-1}) = 0 \quad \text{for all } k \leq d - 1.
		\]
		
		Consider the long exact sequence \cite{switzer2017algebraic} of homotopy groups for the pair $(X \cup Y, X)$:
		\[
		\cdots \to \pi_k(X) \xrightarrow{i_*} \pi_k(X \cup Y) \xrightarrow{\partial} \pi_k(X \cup Y, X) \xrightarrow{\delta} \pi_{k-1}(X) \to \cdots
		\]
		Since $\pi_k(X \cup Y, X) = 0$ for $k \leq d - 1$, and $\pi_k(X) = \pi_{k-1}(X) = 0$ for $k \leq d - 1$ by assumption, the sequence reduces to:
		\[
		0 \to \pi_k(X \cup Y) \to 0,
		\]
		which implies:
		\[
		\pi_k(X \cup Y) = 0 \quad \text{for all } k \leq d - 1.
		\]
		
	\end{proof}
	
	Using a similar approach, we can also prove the following lemma.
	
	\begin{lemma}\label{attach2}
		Let $X$ be a $d$-dimensional CW complex satisfying $\pi_k(X) = 0$ for all $k \leq d-1$. Let Y be a $d$-dimensional simplex and $X \cap Y \cong D^{d-1}$, i.e., it is homeomorphic to the $(d-1)$-dimensional sphere. 
		Then: $\pi_k(X \cup Y) = 0$ for all $k \leq d-1.$
	\end{lemma}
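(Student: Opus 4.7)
The plan is to mirror the proof of Lemma~\ref{attach}, changing only the computation of the relative homotopy groups of the attaching pair. First I would observe that since $Y \cong D^d$ and $X \cap Y \cong D^{d-1}$ must sit inside the boundary sphere $\partial Y \cong S^{d-1}$ as a closed subcomplex (a ``hemispherical'' subdisc), the pair $(X \cup Y, X)$ deformation retracts onto $(Y, X \cap Y) \cong (D^d, D^{d-1})$. This gives
\[
\pi_k(X \cup Y, X) \cong \pi_k(D^d, D^{d-1}).
\]

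Next I would compute $\pi_k(D^d, D^{d-1}) = 0$ for all $k \geq 1$. This follows immediately from the long exact sequence of the pair $(D^d, D^{d-1})$ together with the contractibility of both $D^d$ and $D^{d-1}$: every absolute homotopy group appearing in the sequence vanishes, so the relative groups must vanish as well. This replaces the fact $\pi_k(D^d, S^{d-1}) = 0$ for $k \leq d-1$ used in Lemma~\ref{attach}; note that here one actually obtains vanishing in all degrees, not merely up to $d-1$.

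With these facts in hand, substituting into the long exact sequence of $(X \cup Y, X)$,
\[
\cdots \to \pi_k(X) \to \pi_k(X \cup Y) \to \pi_k(X \cup Y, X) \to \pi_{k-1}(X) \to \cdots,
\]
and using $\pi_k(X) = \pi_{k-1}(X) = 0$ for $k \leq d-1$ from the hypothesis together with $\pi_k(X \cup Y, X) = 0$, one reads off $\pi_k(X \cup Y) = 0$ for all $k \leq d-1$, as required.

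The main obstacle, as I see it, is justifying the opening deformation retract: one needs $X \cap Y$ to be a ``nice'' subcomplex of $\partial Y$ so that $(Y, X \cap Y)$ is an NDR pair and the deformation retract of $Y$ onto $X \cap Y$ extends to $X \cup Y$. In the CW setting this is automatic, and the sub-simplexoid intersection condition in Definition~\ref{general} is exactly what enforces the required regularity. An alternative route that sidesteps the relative homotopy group calculation entirely is to note that $X \cap Y \hookrightarrow Y$ is a cofibration between contractible spaces, hence $Y$ deformation retracts onto $X \cap Y$; extending by the identity on $X$ yields a deformation retract of $X \cup Y$ onto $X$, so $X \cup Y \simeq X$ and the conclusion follows at once.
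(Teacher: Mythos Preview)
Your proposal is correct and follows exactly the approach the paper intends: the paper's own proof is just the sentence ``Using a similar approach, we can also prove the following lemma,'' and you have supplied precisely that similar approach, swapping the computation $\pi_k(D^d,S^{d-1})=0$ for $k\le d-1$ with $\pi_k(D^d,D^{d-1})=0$ for all $k$ and otherwise repeating the long exact sequence argument verbatim. Your closing alternative---observing directly that $Y$ deformation retracts onto the contractible $X\cap Y$, so $X\cup Y\simeq X$---is a clean shortcut that bypasses the relative homotopy groups entirely, but the main line of your argument is the one the paper has in mind.
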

	

	In high-dimensional spaces, some common definitions can be generalized as follows. 
	
	
	
	\begin{definition}[{\it multiple simplices}]\label{mul}
		Let $T_1$ and $T_2$ be $i$-simplices of a $d$-uniform-topological hypergraph $G$. If $V(T_1)= V(T_2)$, then $T_1$ and $T_2$ are called $i$-dimensional {\it multiple simplices}. 
	\end{definition}

	\begin{definition}[{\it $\mathbb{R}^d$-loops}]\label{loop}
		Let $T_1$ be an $i$-simplex of a $d$-uniform-topological hypergraph. $V(T_1)=\{u_0, u_1, ..., u_k\}$. If there exists $u_i, u_j \in V(T_1)$ ($i\neq j$) such that $u_i= u_j$, then $T_1$ is called an {\it $\mathbb{R}^d$-loop}. 
	\end{definition}
	
	Since higher-dimensional simplices have more than two vertices, the definition of the loops in higher-dimensional manifolds differs slightly from that in planar graphs. As long as two vertices of a simplices overlap, we consider it as an $\mathbb{R}^d$-loop.

	\begin{definition}[{\it simple $\mathbb{R}^d$-hypergraph}]\label{simple} 
		If a $d$-uniform-topological hypergraph (or $\mathbb{R}^d$-hypergraph) $G$ does not contain multiple simplices and $\mathbb{R}^d$-loop, then $G$ is called a {\it simple $d$-uniform-topological hypergraph (or simple $\mathbb{R}^d$-hypergraph)}. 
	\end{definition}

	Unless otherwise specified, all $d$-uniform-topological hypergraphs mentioned hereafter will be simple. Analogous to the definition of incident and adjacent in graph theory, we can define the notions of incident and adjacent in $d$-uniform-topological hypergraphs. 
	
	\begin{definition}[{\it neighbour}]
		Let $G$ be an $\mathbb{R}^d$-hypergraph, and an $i$-face of $G$ is denoted by $a_i$, and the set of $i$-face of $G$ is denoted by 
		$$A_i(G)= \{a_i| a_i \ is \ \emph{an} \ i\emph{-face of} \ G \}.$$ 
		For convenience, we use $V(G)$ to denote the vertex set of $G$, use $V(a_i)$ to denote the vertex set of $a_i$.

	\end{definition}

	\begin{definition}[{\it incident and adjacent}] \label{def-incident}
		Let $u$ and $v$ be vertices, we say $u$ is adjacent to $v$ if there exists $a_{d-1} \in A_{d-1}(G)$ such that $u, v \in V(a_{d-1})$. 
		The set of all vertices that adjacent to $u$ is denoted by $N_G(u)$, the degree of $u$ is denoted by $d_G(u)= |N_G(u)|$. 
		
		Let $a_i$ be an $i$-face, and $a_j$ be a $j$-face ($i>0$ and $i\leq j$). We say $a_i$ is incident to $a_j$ (or $a_j$ is incident to $a_i$) if $i< j$ and $a_i\cap a_j= a_i$; we say $a_i$ is adjacent to $a_j$ if $i= j$ and $a_i\cap a_j$ is an $(i-1)$-face. 
		The set of all $j$-face incident (adjacent) to $a_i$ is denoted by $N_{Gj}(a_i)$. We say $d_{Gj}(a_i)= |N_{Gj}(a_i)|$ is the {\it $j$-dimensional degree} of $a_i$.

	\end{definition}

		We now introduce the definition of \emph{minor} in the framework of $d$-uniform topological hypergraphs,
		and start with defining the operations of \emph{deletion}, \emph{merging}, as well as \emph{contraction} of simplices.
		
		\vspace{0.5em}
		
		\begin{definition}[{\it simplex deletion}]\label{deletion}
			Given a $d$-uniform topological hypergraph $G$, there are two natural ways of deriving smaller hypergraphs from $G$. 
			If $e$ is a $(d-1)$-simplex of $G$, we may obtain a hypergraph with $m-1$ $(d-1)$-simplices by deleting $e$ from $G$ but leaving the vertices and the remaining simplices intact. The resulting hypergraph is denoted by $G\backslash e$. Similarly, if $v$ is an $i$-simplex of $G$ with $i<d-1$, we may obtain a hypergraph by deleting from $G$ the simplex $v$ together with all the $(d-1)$-simplices incident with $v$. The resulting hypergraph is denoted by $G - v$ or $G\backslash v$. 
			
		\end{definition}

		Intuitively, a \emph{simplex contraction} can be understood as continuously shrinking a simplex into a single point.  
		We shall give a rigorous definition of this process by means of the concept of quotient space.
		
		
		\begin{definition}[simplex contraction]
			Let $P$ be a $d$-uniform topological hypergraph (or a regular CW-complex), and let $e \subset P$ be an $i$-simplex.  
			The \emph{contraction of $e$ in $P$} is the quotient space
			\[
			P/e := P/{\sim_e},
			\]
			where the equivalence relation $\sim_e$ is defined by
			\[
			x \sim_e y \iff x = y \text{ or } x, y \in e.
			\]
			That is, all points of $e$ are identified to a single vertex, while all other points of $P$ remain distinct.  
			The resulting space $P/e$ is called the $d$-uniform topological hypergraph obtained from $P$ by \emph{contracting the simplex $e$ to a vertex}. 
		\end{definition}


		It should be noted that a simplex contraction may produce multiple simplices (see Definition~\ref{mul}).  
		To ensure that the resulting $d$-uniform topological hypergraph remains simple, these multiple simplices must be removed.  
		In the case of graphs, multiple edges can simply be reduced by deleting the redundant ones.  
		However, in higher dimensions, two distinct cases must be considered:
		\begin{itemize}
			\item For multiple $(d-1)$-simplices, it suffices to delete one of them;
			\item For multiple $i$-simplices with $i < d-1$, according to Definition~\ref{deletion}, deleting one of them directly would also remove all $(d-1)$-simplices adjacent to it.  
			Therefore, we need to define the \emph{merging of multiple simplices}.
		\end{itemize}
		
		Intuitively, \emph{merging} means combining two multiple simplices into a single simplex.  
		A rigorous definition of this process will also be given using the notion of a quotient space.
		
		\begin{definition}[merging of two $i$-simplices]
			Let $P$ be a $d$-uniform topological hypergraph (or a regular CW-complex). 
			Let $f_1,f_2 \subset P$ be two closed $i$-simplices which are homeomorphic, and suppose their sets of $0$-simplices coincide:
			\[
			V(f_1) = V(f_2).
			\]
			Assume there exists an $(i+1)$-face $F$ of $P$ such that 
			\[
			\partial F = f_1 \cup f_2,
			\]
			and a homeomorphism 
			\[
			\varphi: f_1 \xrightarrow{\;\cong\;} f_2
			\]
			that restricts to the identity on vertices, i.e. $\varphi|_{V(f_1)} = \mathrm{id}_{V(f_1)}$.
			
			Define an equivalence relation $\sim_{\varphi}$ on $P$ by
			\[
			x \sim_{\varphi} \varphi(x) \quad \text{for all } x \in f_1.
			\]
			The \emph{merging} of $f_1$ and $f_2$ is the quotient space
			\[
			P/(f_1 \sim_{\varphi} f_2) := P/{\sim_{\varphi}},
			\]
			the resulting space $P/(f_1 \sim_{\varphi} f_2)$ is called the $d$-uniform topological hypergraph obtained from $P$ by \emph{merging $f_1$ and $f_2$}. 
		\end{definition}

		%
		%

		\begin{definition}[minor]
			Let $G$ be a finite, simple $d$-uniform topological hypergraph. 
			$H$ is called a minor of $G$ if $H$ can be formed from $G$ by deleting, merging and contracting simplices.  
			
		\end{definition}

	\begin{definition}[{\it $\mathbb{R}^d$-embedding}]
		Let $G$ be a $d$-uniform-topological hypergraph, an $\mathbb{R}^d$-embedding $G'$ of $G$ can be regarded as a hypergraph isomorphic to $G$ and is embedded in $\mathbb{R}^d$. 
	\end{definition}

	We conclude this section with a brief summary. The $\mathbb{R}^d$-hypergraph can be considered as an extension of the definition of the planar graph into higher-dimensional space or as a special type of CW complex. Whether it is an $\mathbb{R}^d$-hypergraph, or a non-$\mathbb{R}^d$-hypergraph, they are essentially special cases of CW complex. 
	
	A general $\mathbb{R}^d$-hypergraph requires that this CW complex has a trivial $i$-th homotopy group for $i\in \{1,2,3,...,d-2\}$. An $\mathbb{R}^d$-hypergraph requires that this CW complex has a trivial $i$-th homotopy group for $i\in \{1,2,3,...,d-2\}$, and can be embedded in $\mathbb{R}^d$. A non-$\mathbb{R}^d$-hypergraph requires that this CW complex has a trivial $i$-th homotopy group for $i\in \{1,2,3,...,d-2\}$ and cannot be embedded in $\mathbb{R}^d$. A thorough understanding of these definitions lays a solid foundation for subsequent proofs.

	\section{Closed $\mathbb{R}^d$-hypergraph and triangulated $\mathbb{R}^d$-hypergraph ($d\geq 3$)}\label{section5}

	A simple connected plane graph where all faces have degree three is called a plane triangulation, or simply a triangulation. This section extends the concept of triangulation to higher-dimensional spaces. It is well known that for any planar graph, edges can be added to eliminate all pendant edges. Without pendant edges, a planar graph admits an ear decomposition. As illustrated in Figure~\ref{pl}, the original graph can be reconstructed by starting from a cycle $v_1v_2v_3v_4v_5v_1$ and iteratively attaching paths $v_1v_6v_7v_8v_3$ and $v_8v_9v_{10}v_{11}v_{12}v_4$. As shown in Figure \ref{s3}, if we generalize the initial cycle in the ear decomposition of a planar graph to a $d$-dimensional sphere $S^d$, and generalize the paths to $d$-dimensional balls, the associated concepts can be extended to higher-dimensional spaces.

	Note that every planar graph admits a triangulation; that is, we can add edges such that each face of the graph becomes a triangle. Analogously, in higher dimensions, we can add simplices to an $\mathbb{R}^d$-hypergraph $G$ so that each maximal connected component in the set $\mathbb{R}^d\backslash G$ is homeomorphic to a $d$-dimensional simplex. The simplices in $\mathbb{R}^d$-hypergraphs can be divided into two categories which are similar to the {\it pendant edge} and {\it non-pendant edge} in planar graphs. Next, we will extend the concept of pendant edges to higher-dimensional spaces.

	\begin{figure}
		\centering
		\begin{tikzpicture}[scale=1, every node/.style={circle, draw, minimum size=6mm, inner sep=1pt}]
			\node (v1) at (0, 0) {$v_1$};
			\node (v2) at (2, 0) {$v_2$};
			\node (v3) at (4, 0) {$v_3$};
			\node (v4) at (6, 0) {$v_4$};
			\node (v5) at (3, -1) {$v_5$};
			\node (v6) at (0, 1.5) {$v_6$};
			\node (v7) at (0, 3) {$v_7$};
			\node (v8) at (2, 3) {$v_8$};
			\node (v9) at (3, 3) {$v_9$};
			\node (v10) at (4, 3) {$v_{10}$};
			\node (v11) at (5, 3) {$v_{11}$};
			\node (v12) at (6, 3) {$v_{12}$};
			
			\draw (v1) -- (v2);
			\draw (v2) -- (v3);
			\draw (v3) -- (v4);
			\draw (v4) -- (v5);
			\draw (v5) -- (v1);
			
			\draw (v1) -- (v6);
			\draw (v6) -- (v7);
			\draw (v7) -- (v8);
			\draw (v8) -- (v3);
			
			\draw (v8) -- (v9);
			\draw (v9) -- (v10);
			\draw (v10) -- (v11);
			\draw (v11) -- (v12);
			\draw (v12) -- (v4);
		\end{tikzpicture}
		\caption{Ear decomposition.}
		\label{pl}
	\end{figure}
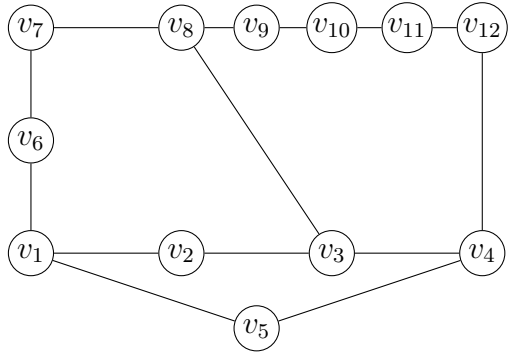

	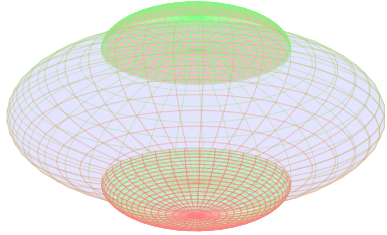
\begin{figure}
		\centering
		\begin{tikzpicture}
			\centering
			\begin{axis}[
				view={60}{30},
				hide axis,
				colormap={bluered}{color(0cm)=(red!50) color(1cm)=(green!50)},
				]
				\addplot3[
				surf,
				shader=flat,
				fill=blue!20,
				draw=none,
				opacity=0.3,
				domain=0:360,      
				y domain=0:180,    
				samples=40,
				samples y=20
				]
				({2*cos(x)*sin(y)}, {2*sin(x)*sin(y)}, {2*cos(y)});
				
				\addplot3[
				surf,
				shader=flat,
				fill=red!40,
				draw=none,
				opacity=0.5,
				domain=0:360,
				y domain=0:90,
				samples=40,
				samples y=20
				]
				({1*cos(x)*sin(y)}, {1*sin(x)*sin(y)}, {2 + 1*cos(y)});
				
				\addplot3[
				surf,
				shader=flat,
				fill=green!40,
				draw=none,
				opacity=0.5,
				domain=0:360,
				y domain=90:180,
				samples=40,
				samples y=20
				]
				({1*cos(x)*sin(y)}, {1*sin(x)*sin(y)}, {-2 + 1*cos(y)});
			\end{axis}
		\end{tikzpicture}
		\caption{An $\mathbb{R}^3$-hypergraph without pendant simplex.}
		\label{s3}
	\end{figure}



	\begin{definition}[{\it pendant simplex}]\label{pendant simplex}
		Let $T$ be a $(d-1)$-simplex of a $\mathbb{R}^d$-hypergraph $K$, $N_{K(d-2)}(T)$ represent the set of all $(d-2)$-simplex that are incident to $T$. If $\forall J \in N_{K(d-2)}(T)$, there exists a $(d-1)$-simplex $T' \subseteq K$ such that $J= T \cap T'$, then $T$ is called {\it non-pendant simplex}, if not $T$ is called {\it pendant simplex}. 
	\end{definition}
	
	Similarly, following the definitions of $2$-connected planar graphs and triangulated planar graphs, we can introduce the notions of closed $\mathbb{R}^d$-hypergraphs and triangulated $\mathbb{R}^d$-hypergraphs. 
	
	\begin{definition}[{\it closed $d$-uniform topological hypergraph}]\label{closed topological-hypergraph}
		The {\it $d$-uniform-topological hypergraph} without {\it pendant simplex} is called the {\it closed $d$-uniform topological hypergraph}. 
		
	\end{definition}

	\begin{definition}[{\it closed $\mathbb{R}^d$-hypergraph}]\label{closed}
		Let $K$ be an $\mathbb{R}^d$-hypergraph, then $K$ is called a {\it closed $\mathbb{R}^d$-hypergraph} if $K$ contains no pendant simplex. 
	\end{definition}

	\begin{definition}[{\it triangulated $d$-uniform topological hypergraph}]\label{triangulated-topological-hypergraph}
		The $(d-1)$-skeleton of a closed $\mathbb{R}^{d+1}$-hypergraph is referred to as a triangulated $d$-uniform topological hypergraph. 
	\end{definition}
	
		We now explain the meaning of the term “triangulated” in Definition~\ref{triangulated-topological-hypergraph}.  
		According to Definition~\ref{special0}, an $\mathbb{R}^{d+1}$-hypergraph $G$ consists of a collection of $d$-simplices, denoted by $\{T_1, T_2, \dots, T_m\}$. The $(d-1)$-skeleton of $G$ is therefore formed by the boundaries $\partial(T_i)$ of these simplices.  
		In other words, a triangulated $d$-uniform topological hypergraph $G'$ which is the $(d-1)$-skeleton of $G$ can be regarded as being composed of $\partial(T_i)$. 

Note that the triangulated $d$-uniform topological hypergraph is a special case of the closed $d$-uniform topological hypergraph since there is no pendant simplex in it. 

\begin{definition}[{\it triangulated $\mathbb{R}^d$-hypergraph}]\label{triangulate-graph}
	Let $K$ be a {\it triangulated $d$-uniform topological hypergraph}, then $K$ is called the {\it triangulated $\mathbb{R}^d$-hypergraph} if $K$ can be embedded in $\mathbb{R}^d$. 
	
\end{definition}

\begin{lemma}\label{structure}
	Let $K$ be a closed $\mathbb{R}^d$-hypergraph. 
	Then every connected component of the complement \(S^d\setminus K\) is homeomorphic to the open $d$-ball.
\end{lemma}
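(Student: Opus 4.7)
The plan is to combine Alexander duality with the generalized Schoenflies theorem, using the non-pendant hypothesis as the bridge between homological and topological conclusions. First, I would work inside $S^d$, viewing $K$ as a closed CW subcomplex (via one-point compactification if needed, and using the embeddability built into the definition of an $\mathbb{R}^d$-hypergraph). Since $\pi_i(K)=0$ for $1\le i\le d-2$ by definition and $K$ is connected by the Procedure~I construction in Lemma~\ref{special}, the Hurewicz theorem gives $\tilde H_i(K)=0$ for $0\le i\le d-2$. Alexander duality in $S^d$ then yields $\tilde H_i(S^d\setminus K)\cong \check H^{d-i-1}(K)=0$ for every $i\ge 1$, so every connected component $C$ of $S^d\setminus K$ is acyclic.

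Next, I would analyze the topological frontier $\partial\bar C\subseteq K$ of each component and show it is a $(d-1)$-manifold homeomorphic to $S^{d-1}$. The local picture is the essential ingredient: near an interior point of a $(d-1)$-simplexoid the complement has exactly two sides, and near an interior point of a $(d-2)$-simplexoid $J$, the set of $(d-1)$-simplexoids containing $J$ partitions a small transverse $2$-disk into finitely many cyclic sectors, each sector lying in a single component. The non-pendant hypothesis (Definition~\ref{pendant simplexoid}) guarantees that no sector has a "free" $(d-2)$-edge on $\partial\bar C$, so the incident sectors belonging to $C$ can be paired consistently. This gives $\partial\bar C$ the structure of a closed, connected, orientable $(d-1)$-manifold. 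Combining this with the acyclicity of $\bar C$ obtained from step one (via the long exact sequence of the pair $(S^d,K)$ or directly from $H_*(C)$) forces $\partial\bar C$ to be a homology sphere, and then the generalized Poincaré conjecture in the PL category upgrades it to a genuine $(d-1)$-sphere.

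Finally, I would invoke the generalized Schoenflies theorem: a locally flat (here PL-embedded) $(d-1)$-sphere in $S^d$ bounds a closed $d$-ball on each side. Since $\bar C$ is one of the two closed regions cut out by $\partial\bar C$, this gives $\bar C\cong D^d$ and hence $C$ is homeomorphic to the open $d$-ball, as claimed.

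The main obstacle is the middle step: Alexander duality gives only a homological conclusion, and in dimension $d\ge 4$ a closed homology $(d-1)$-sphere need not be a true sphere, so one cannot jump directly from duality to Schoenflies. The non-pendant hypothesis must be used carefully to first promote $\partial\bar C$ from a pseudomanifold to a genuine closed manifold. The cleanest fallback, if the local pairing argument becomes unwieldy, would be an induction along the attaching steps of Procedure~I from Lemma~\ref{special}: start from the base simplicial $(d-1)$-sphere bounding a pair of $d$-balls, and check that each admissible attachment along $D^{d-2}$ or $S^{d-2}$ subdivides an existing open $d$-ball into two open $d$-balls (or leaves it alone), with the no-pendant condition ensuring the final configuration has no "half-open" regions left behind.
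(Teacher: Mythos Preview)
Your approach is considerably more elaborate than the paper's. The paper argues by direct contradiction in a few lines: if some component $D_i$ of $S^d\setminus K$ were not an open $d$-ball, then by the generalized Schoenflies theorem its frontier $\partial D_i$ could not be $S^{d-1}$; the paper then asserts that $\partial D_i$ fails to be $(d-2)$-connected, so it carries a $j$-sphere ($j\le d-2$) that does not contract, and since $\partial D_i\subseteq K$ this contradicts $\pi_i(K)=0$ for $i\le d-2$. No Alexander duality, no Hurewicz, and no explicit manifold analysis of the frontier are invoked.

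Your main line has a genuine gap at the Poincar\'e step. Alexander duality together with Hurewicz gives only that each component $C$ is acyclic and hence (granting your local pairing argument) that $\partial\bar C$ is a closed \emph{homology} $(d-1)$-sphere. But the generalized Poincar\'e conjecture characterizes \emph{homotopy} spheres, not homology spheres; for $d-1\ge 3$ there exist homology spheres with nontrivial fundamental group (the Poincar\'e dodecahedral space is the prototype), so the upgrade you claim does not follow. Nothing in your argument pins down $\pi_1(\partial\bar C)$: it cannot be read off from Alexander duality, nor inherited from $\pi_1(K)=0$ (subspaces do not inherit simple connectivity), nor obtained by general position, since a $2$-disk and a $(d-1)$-complex in $S^d$ have expected intersection dimension $1$, not negative. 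Your local pairing argument also only treats interior points of $(d-1)$- and $(d-2)$-simplexoids; verifying that $\partial\bar C$ is a genuine manifold at the lower-dimensional strata needs more. Your fallback---inducting along the attachments of Lemma~\ref{special} and checking that each admissible attachment splits an existing open ball into open balls---is the sounder route here, and is essentially how the paper handles the closely related Lemma~\ref{E1} via the hyper ear decomposition.
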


\begin{proof}
	Note that the generalized Schoenflies theorem (Lemma \ref{Schoenflies}) guarantees that no pathological cases like \emph{Alexander's horned sphere} can arise in this proof.
	
	Since $K$ can be viewed as a CW complex, the complement \(S^d\setminus K\) is partitioned into $d$-dimensional components $D_1,D_2,...,D_x$. If there exists a $D_i\in \{D_1,D_2,...,D_x\}$ such that $D_i$ is not homeomorphic to the open $d$-ball, then the boundary of $D_i$ (denoted by $\partial(D_i)$) is not $S^{d-1}$ by Lemma \ref{Schoenflies}, furthermore, $\partial(D_i)$ is not $(d-2)-connected$.  Therefore, there exists a $j$-dimensional sphere $S^j\subseteq \partial(D_i)$ ($j\leq d-2$) such that $S^j$ cannot continuously deform into a base point. On the other hand, it is clear that $S^j \subseteq K$, which implies that $K$ is not $(d-2)$-connected, a contradiction. 
	
\end{proof}

Lemma \ref{structure} implies that a closed $\mathbb{R}^d$-hypergraph can be regarded as a union of internally disjoint $(d-1)$-dimensional spheres.

\begin{lemma}[generalized Schoenflies theorem \cite{brown1961proof}]\label{Schoenflies}
	Let $\varphi\colon S^{n-1} \hookrightarrow S^n$ be a topological embedding in a locally flat way (that is, the embedding extends to that of a thickened sphere) with $n \geq 2$, and let $A$ be the closure of a component of $S^n \setminus \varphi(S^{n-1})$, then $A$ is homeomorphic to the closed $n$-dimensional ball $D^n$.
\end{lemma}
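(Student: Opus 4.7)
The plan is to follow Morton Brown's original argument, combining a bicollar with a shrinking swindle. The local flatness hypothesis supplies a bicollar: an extension of $\varphi$ to a topological embedding $\Phi: S^{n-1} \times [-1,1] \hookrightarrow S^n$ with $\Phi(x,0) = \varphi(x)$. The Jordan--Brouwer separation theorem then gives exactly two path components of $S^n \setminus \varphi(S^{n-1})$; let $A$ and $B$ denote their closures, so $A \cup B = S^n$ and $A \cap B = \varphi(S^{n-1})$. By symmetry it suffices to prove $A \cong D^n$.

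The crux is Brown's shrinking argument. Using the product coordinate of the bicollar, I would construct a sequence of self-homeomorphisms $h_k : S^n \to S^n$, each supported in a strictly thinner collar than its predecessor, that iteratively compress $B$ toward a fixed interior point $p^\ast \in \mathrm{int}(B)$. With the compression amounts chosen so that successive displacements form a summable geometric series, the $h_k$ converge uniformly to a continuous surjection $h : S^n \to S^n$ that collapses $B$ to $\{p^\ast\}$ and is a homeomorphism on $S^n \setminus B$. This realizes $S^n / B \cong S^n$. Restricted to $A$, the map $h$ factors as the quotient $A \to A / \partial A$, and because $\partial A = \varphi(S^{n-1})$ is bicollared in $A$, the bicollar near $\partial A$ becomes an open $n$-ball around $[p^\ast]$ after collapsing, while the complementary closed piece of $A$ maps homeomorphically onto the complement of that open ball in $S^n$, which is manifestly a closed $n$-ball. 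Hence $A \cong D^n$.

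The main obstacle is the quantitative control inside the swindle: each $h_k$ must shrink $B$ enough to force $\mathrm{diam}(h_k(B)) \to 0$, while perturbing $h_{k-1}$ little enough on $S^n \setminus B$ that the sequence converges uniformly and retains injectivity off $B$. Brown handles this by iterating a fixed collar-compression on a geometrically shrinking family of collar neighborhoods, so that both the supports and the per-step displacements are summable; the limit is then continuous, genuinely collapses $B$ to a single point (rather than to a larger set), and is injective everywhere it needs to be. A modern packaging is Bing's shrinking criterion, which reduces the entire task to producing exactly such a shrinking sequence of homeomorphisms --- precisely what the bicollar supplies.
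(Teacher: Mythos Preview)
The paper does not prove this lemma at all: it is stated with a citation to Brown's 1961 paper and used as a black box. Your proposal is a faithful outline of Brown's original shrinking argument, so in that sense you are reproducing exactly what the citation points to rather than diverging from the paper.

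As a sketch of Brown's proof the proposal is essentially sound, but one step deserves a sharper statement. After the swindle you have $S^n/B \cong S^n$, and restricting to $A$ gives $A/\partial A \cong S^n$. The passage from $A/\partial A \cong S^n$ to $A \cong D^n$ is not automatic for an arbitrary compactum with boundary $S^{n-1}$; it really uses the \emph{inner} half of the bicollar. The clean way to say it is: the collar $\Phi(S^{n-1}\times[0,1])$ lies in $A$, and $A$ is homeomorphic to $A' := A \setminus \Phi(S^{n-1}\times[0,1))$ by absorbing the collar. Since $A'$ is disjoint from $B$, the limit map $h$ restricts to a homeomorphism of $A'$ onto $S^n$ minus an open set that you can check is an open $n$-ball (it is the $h$-image of $B$ together with the inner open collar, and the collar coordinate makes this explicit). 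Hence $A' \cong D^n$, so $A \cong D^n$. Your paragraph gestures at this but conflates two pieces; making the role of the inner collar explicit closes the gap.
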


\section{Bridges}\label{section-Br}
In this section, we aim to establish some lemmas of {\it bridge} in higher-dimensional spaces. 
Let $H$ be a proper subgraph of a connected $\mathbb{R}^d$-hypergraph $G$. The set $A_{d-1}(G) \backslash A_{d-1}(H)$ may be partitioned into classes as follows. 
For each component $F$ of $G[V(G)-V(H)]$, there is a class consisting of the {\color{blue} $d$-simplices} of $F$ together with the {\color{blue} $d$-simplices} linking $F$ to $H$. 
Each remaining $d$-simplex $e$ defines a singleton class $\{e\}$. 
The subgraphs of $G$ induced by these classes are the bridges of $H$ in $G$. 
It follows immediately from this definition that bridges of $H$ can intersect only in $i$-simplices of $H$ with $i\leq d-2$, and that any two vertices of a bridge of $H$ are connected by a path in the bridge that is internally disjoint from $H$. 

For a bridge $B$ of $H$, the {\it projection} of $B$ is denoted by $p(B)= B \cap H$; 
the elements of $V(B \cap H)$ are called its {\it vertices of attachment} to $H$, the remaining vertices of $B$ are its internal vertices. 
A bridge is trivial if it has no internal vertices. 
A bridge with $k$ vertices of attachment is called a $k$-bridge. Two bridges with the same vertices of attachment and same projection are equivalent bridges.

We are concerned here with bridges of $(d-1)$-sphere, and all bridges are understood to be bridges of a given $(d-1)$-sphere $S^{d-1}$. Thus, to avoid repetition, we abbreviate `bridge of $S^{d-1}$' to `bridge' in the coming discussion. 

\begin{lemma}\label{projection-connect}
	Let $G$ be an $\mathbb{R}^d$-hypergraph, $B$ be a bridge of $(d-1)$-sphere $S'$, the projection of $B$ which is denoted by $p(B)= B \cap S^{d-1}$ is a connected $\mathbb{R}^{d-1}$-hypergraph. 
\end{lemma}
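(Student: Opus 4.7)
The plan is to establish the two assertions of the lemma separately: first that $p(B)$ is connected, and then that it satisfies the $\mathbb{R}^{d-1}$-hypergraph conditions. I work throughout in the case $d\geq 3$, since the homotopy-type hypotheses that define an $\mathbb{R}^d$-hypergraph become substantive only then.

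For connectedness, I will argue by contradiction. Suppose $p(B)$ decomposes into $k\geq 2$ disjoint connected components $P_1,\ldots,P_k$. Since $B$ is connected by construction (it arises as one connected component $F$ of $G[V(G)\setminus V(H)]$ together with the $(d-1)$-simplexoids linking $F$ to $H$), there is a path $\gamma$ in $B$ from a point of $P_1$ to a point of $P_2$; because the two endpoints lie in different components of $p(B)$, the path $\gamma$ must leave $S^{d-1}$ and enter the interior of $B$. Concatenating $\gamma$ with a path along $S^{d-1}$ joining its endpoints produces a loop $L\subseteq B\cup S^{d-1}$. Applying the Mayer--Vietoris sequence to the decomposition $B\cup S^{d-1}$ with intersection $p(B)$, the disconnectedness of $p(B)$ contributes a free summand of rank $k-1$ to $H_1(B\cup S^{d-1})$, represented by the loop $L$ (here I use that $B$ and $S^{d-1}$ are both connected with vanishing $H_1$, the latter since $d\geq 3$).

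The next step converts this non-triviality into a contradiction with $\pi_1(G)=0$. Here the embedding $G\hookrightarrow\mathbb{R}^d$ is essential: $S^{d-1}$ separates $S^d$ into two open $d$-balls $D_1,D_2$ by Lemma~\ref{Schoenflies}, and each bridge of $S^{d-1}$ lies entirely in one of $\bar{D}_1,\bar{D}_2$ because its interior (the part off $S^{d-1}$) is connected and disjoint from $S^{d-1}$. Without loss of generality $B\subseteq \bar{D}_1$. Any other bridge lying in $\bar{D}_1$ intersects $B$ only in sub-simplexoids of $S^{d-1}$, and the maximality built into the bridge definition prevents any such bridge from reconnecting the components $P_1,\ldots,P_k$ of $p(B)$ inside $\bar{D}_1$. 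Consequently any disc in $G$ contracting $L$ would be forced to lie inside $B\cup S^{d-1}$, which is impossible by the rank computation above. This contradicts $\pi_1(G)=0$ and therefore forces $k=1$.

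Once connectedness is established, the $\mathbb{R}^{d-1}$-hypergraph property follows comparatively easily. The sub-simplexoids of $S^{d-1}$ that make up $p(B)$ have dimension at most $d-2$, and since $S^{d-1}$ with any interior point removed is homeomorphic to $\mathbb{R}^{d-1}$, choosing such a point outside $p(B)$ exhibits an embedding of $p(B)$ into $\mathbb{R}^{d-1}$. The vanishing of $\pi_i(p(B))$ for $1\leq i\leq d-3$ will be read off from the same Mayer--Vietoris long exact sequence, using the vanishing of $\pi_i(S^{d-1})$ for $i\leq d-2$ together with the $(d-2)$-connectedness of $B$ inherited from the cell-attachment procedure of Lemma~\ref{special}. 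The hard part of the argument is the middle step: rigorously justifying that no combination of other bridges and portions of $S^{d-1}$ can contract the loop $L$ inside $G$. This amounts to a careful exploitation of the Jordan--Brouwer separation together with the maximality inherent in the definition of bridge, and it is where all the structural hypotheses on $G$ genuinely enter.
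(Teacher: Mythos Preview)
Your approach diverges substantially from the paper's. Rather than invoking Mayer--Vietoris or the global vanishing $\pi_1(G)=0$, the paper applies Lemma~\ref{structure} to $S'\cup B$, writing it as $S_1\cup\cdots\cup S_x$ with each $S_i$ a $(d-1)$-sphere and $S_1=S'$. Each $D_i:=S_i\cap S'$ is then a $(d-1)$-ball, and one recovers $p(B)=\bigcup_{i\ge 2}\partial D_i$ as an explicit union of $(d-2)$-spheres meeting pairwise in contractible disks; the vanishing of $\pi_k(p(B))$ for $k\le d-3$ follows directly from the attachment Lemmas~\ref{attach}--\ref{attach2}. This is more constructive: it exhibits the structure of $p(B)$ rather than trying to back out its invariants from those of the surrounding spaces.

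Your route has genuine gaps. The assertion that $B$ is $(d-2)$-connected does not follow from Lemma~\ref{special}: that lemma builds all of $G$ by successive attachments, but a bridge is only a particular sub-collection of those simplexoids and need not inherit any homotopy vanishing. Without this you lose the input $H_1(B)=0$ for your rank computation, and you also never compute $H_*(B\cup S^{d-1})$, which would be required to run Mayer--Vietoris backwards and extract $H_*(p(B))$. Even granting those, Mayer--Vietoris yields homology, and converting to $\pi_i(p(B))=0$ via Hurewicz for $d\ge 4$ presupposes the simple-connectivity you are trying to prove. Finally, the ``hard part'' does not go through as sketched: another bridge $B'\subseteq\bar D_1$ meets $B$ only inside $S^{d-1}$, but nothing in the definition of bridge prevents a null-homotopy of $L$ from exiting $B$ through $p(B)$, passing through $S^{d-1}$ and $B'$, and returning; the maximality of $B$ says nothing about that. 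The paper's explicit sphere decomposition of $S'\cup B$ sidesteps all of these difficulties at once.
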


\begin{proof}
	
	By Definition \ref{special0}, we only need to prove that the $i$-th homotopy group of $p(B)$ is trivial for $i\in \{1,2,3,...,d-3\}$. 
	
	Lemma \ref{structure} implies that every closed $\mathbb{R}^d$-hypergraph can be regarded as a union of $(d-1)$-dimensional spheres $S^{d-1}$. Let $S' \cup B = S_1 \cup S_2 \cup \dots \cup S_x$, where each $S_i$ $(i \in \{1, 2, \dots, x\})$ is a $(d-1)$-dimensional sphere. Without loss of generality, we assume that $S' = S_1$. 
	
	It is easy to observe that for each $S_i \in \{ S_2, S_3, \dots, S_x \}$, the intersection $D_i = S_i \cap S'$ is either empty or an $i$-dimensional ball ($i\leq d-1$). We only need to consider the case when $i= d-1$, otherwise there exists $\{S_{x_1}, S_{x_2}, ...,S_{x_s}\}\subseteq \{ S_2, S_3, \dots, S_x \}$ such that for all $S_j\in \{S_{x_1}, S_{x_2}, ...,S_{x_s}\}$, $S_j \cap S'$ is a $(d-1)$-dimensional ball and $S_i \cap S' \subseteq \bigcup_{S_j\in \{S_{x_1}, S_{x_2}, ...,S_{x_s}\}}(S_j \cap S')$. 
	
	In case when $D_i = S_i \cap S'$ is a $(d-1)$-dimensional ball, the boundary of $D_i$, denoted $\partial(D_i)$, is a $(d-2)$-dimensional sphere, note that $p(B) = \bigcup_{i \in \{2, 3, \dots, x\}} \partial(D_i)$.
	
	Since $p(B)$ can be viewed as a union of finitely many $(d-2)$-dimensional spheres, and each pair of spheres intersects only along their boundaries (i.e., for all $i, j \in \{2, 3, \dots, x\}$, the intersection $D_i \cap D_j$ is either empty or a contractible disk), it follows from Lemma 3 and Lemma 4 that $\pi_k(p(B)) \cong \pi_k(S^{d-2})$ for all $k\leq d-3$. Therefore, the $i$-th homotopy group of $p(B)$ is trivial for $i\in \{1,2,3,...,d-3\}$. 
	
\end{proof}

The projection of a $k$-bridge $B$ with $k\geq d-1$ effects a partition of $S^{d-1}$ into $r$ disjoint segments, called the segments of $B$. Two bridges avoid each other if all the vertices of attachment of one bridge lie in a single segment of the other bridge; otherwise, they overlap.

Two bridges $B$ and $B'$ are skew if $p(B)$ contains a $(d-2)$-sphere $C(B)$ as a subgraph which effects a partition of $S^{d-1}$ into two disjoint segments $\{R_1, R_2\}$, and there are distinct vertices $u,v$ in vertices of attachment of $B'$ such that $u$ and $v$ are in different segment of $\{R_1, R_2\}$, note that there is a $uv$-path $P(u,v)$ in $S^{d-1}$ such that $P(u,v)\cap C(B) \neq \phi$ by Lemma \ref{projection-connect} and Jordan-Brouwer Separation Theorem (Lemma \ref{jordan-Brouwer}). 

We give an example of skew for $S^2$. As shown in Figure \ref{new10}, both $u$ and $v$ are in the inner region of $S^2$, the bridge induced by $\{u,u_1, u_2,...,u_5\}$ is denoted by $B_1$, the bridge induced by $\{v,v_1, v_2\}$ is denoted by $B_2$. It is obvious that $B_1$ effects a partition of $S^2$ into two disjoint segments (two hemispheres), and $v_1$ and $v_2$ lie in different segments. 

\begin{figure}
	\centering     
	\includegraphics[width=0.65\linewidth]{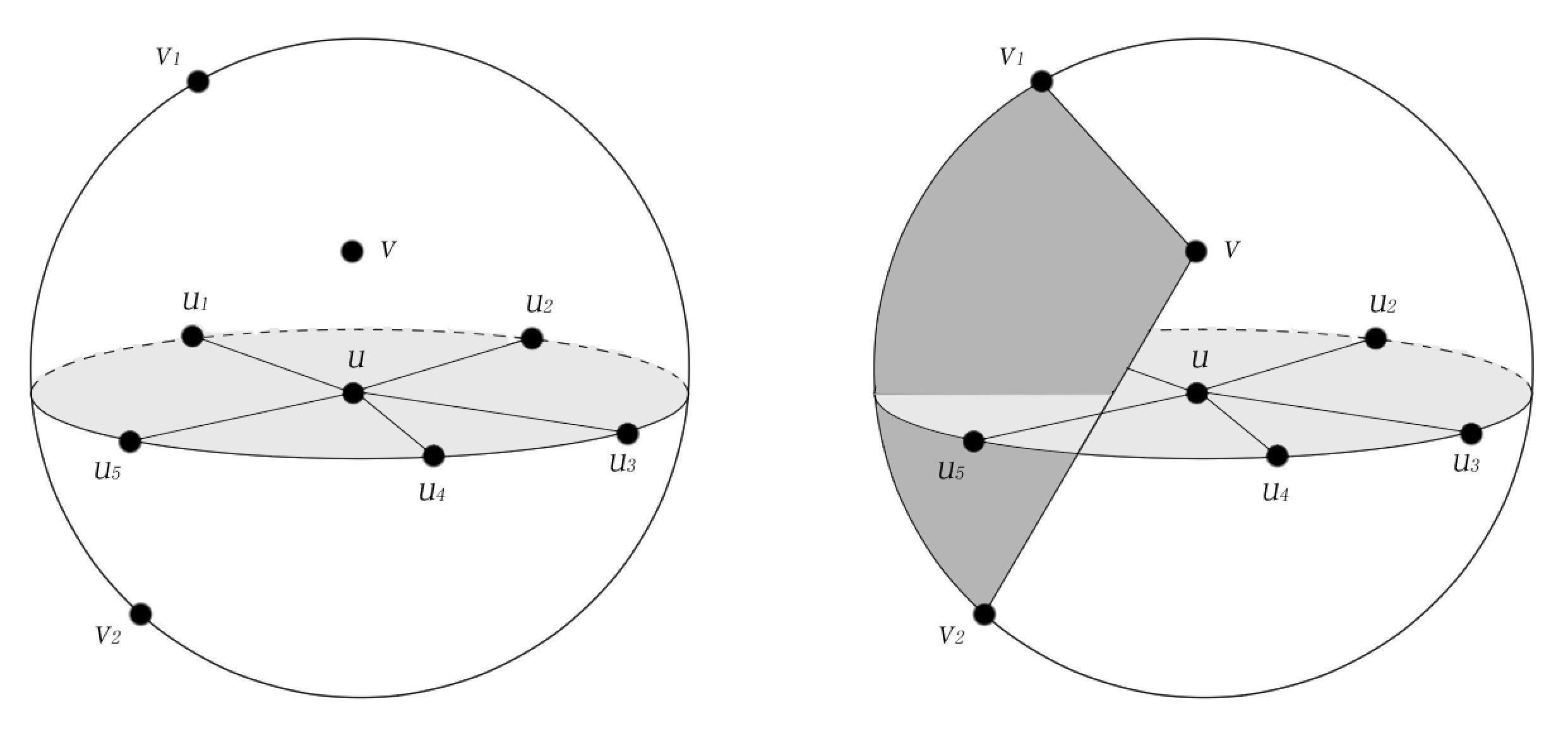}
	\caption{$B_1$ and $B_2$ are skew of $S^2$.}
	\label{new10}
\end{figure}

\begin{lemma}\label{bridges}
	Overlapping bridges of a closed $\mathbb{R}^d$-hypergraph are either skew or else equivalent $(d+1)$-bridges.
\end{lemma}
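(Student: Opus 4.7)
The plan is to adapt the classical plane-graph argument (overlapping bridges of a cycle are skew or equivalent $3$-bridges), replacing cyclic-order reasoning with the Jordan--Brouwer separation theorem in $S^{d-1}$. Let $B, B'$ be overlapping bridges of $S^{d-1}$. By Lemma~\ref{projection-connect}, $p(B)$ is a connected $\mathbb{R}^{d-1}$-hypergraph embedded in $S^{d-1}$, and invoking the structural result of Lemma~\ref{structure} one dimension lower shows that each segment $\sigma$ of $B$ (a component of $S^{d-1}\setminus p(B)$) is an open $(d-1)$-ball whose boundary $\partial\sigma$ is a $(d-2)$-sphere contained in $p(B)$.

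The first reduction is to show that, if $B$ and $B'$ are not skew, every attachment vertex of $B'$ already lies in $V(p(B))$. Indeed, if some attachment $v'$ of $B'$ lay in the open interior of a segment $\sigma$ of $B$, then because $B, B'$ overlap there is another attachment $w'$ of $B'$ outside $\sigma$; the sphere $C := \partial\sigma \subseteq p(B)$ separates $v'$ from $w'$ in $S^{d-1}$, which by definition witnesses skewness. By symmetry the two attachment sets then coincide; call this common set $A$ with $|A| = k$.

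It then remains to pin down $k = d+1$ and to identify $p(B) = p(B')$. For $k \leq d$ one observes that a $(d-2)$-sphere requires at least $d$ vertices, so either no separating sphere sits in $p(B)$ (forcing a single segment, incompatible with overlap) or all $k$ attachments lie on one minimal separating sphere, in which case every attachment belongs to the closure of each of the two segments, again yielding avoidance rather than overlap. For $k \geq d+2$ the goal is to locate a $(d-2)$-sphere $C \subseteq p(B)$ whose two complementary open regions each carry at least one vertex of $A\setminus V(C)$, which would contradict non-skewness. Once $k = d+1$ is forced, modeling the $d+1$ attachments on $\partial\Delta^{d}$ shows that the only projection compatible with non-skewness is the full $(d-2)$-skeleton of that boundary, so $p(B) = p(B')$ and equivalence follows.

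The main obstacle I anticipate is precisely the case $k \geq d+2$. The planar analog uses only the elementary fact that four points on a circle admit a pair of arcs each containing one of them, but in dimension $d-1$ one must show that any closed $\mathbb{R}^{d-1}$-hypergraph on more than $d+1$ attachment vertices always contains an embedded $(d-2)$-sphere splitting $A\setminus V(C)$ nontrivially. I expect to handle this by induction on $k$: contract a well-chosen simplexoid of $p(B)$ whose removal preserves connectivity and the relevant separation structure, apply the statement in the lower-$k$ case, and lift the resulting sphere back to $S^{d-1}$ via Jordan--Brouwer.
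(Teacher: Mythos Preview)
Your overall framework is essentially the paper's: use Jordan--Brouwer on segment boundaries to show that non-skew overlapping bridges must share a common attachment set $A$, rule out $|A|\le d$, and then split on $|A|=d+1$ versus $|A|\ge d+2$. The substantive divergence is precisely at the step you flag as the obstacle.

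For $k\ge d+2$ the paper does \emph{not} induct on $k$ via contraction. It isolates a separate statement (Lemma~\ref{B1}): if $G$ is an $\mathbb{R}^d$-hypergraph homeomorphic to $S^{d-1}$ with $|V(G)|\ge d+2$, then its $(d-2)$-skeleton contains a $(d-2)$-sphere separating two vertices of $G$. The proof is a single vertex-link step. Since such a $G$ is not $K_{d+1}^d$, some vertex $v'$ satisfies $d_{G0}(v')\le |V(G)|-2$; by Lemmas~\ref{sphere} and~\ref{sphere2} the induced subgraph $G[N_{G0}(v')]$ is homeomorphic to $S^{d-2}$, and this link-sphere separates $v'$ from any $u'\in V(G)\setminus(N_{G0}(v')\cup\{v'\})$, which exists by the degree bound. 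Applied in the bridges setting this immediately produces the separating $(d-2)$-sphere witnessing skewness, with no induction needed.

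Your contraction scheme might be salvageable, but as written it is vague on which simplexoid to contract, why the quotient stays homeomorphic to a sphere of the right dimension, and how the separating sphere lifts back without passing through the contracted vertex. The link argument avoids all of this bookkeeping. One further small gap: your first reduction assumes each segment of $B$ is bounded by a $(d-2)$-sphere, i.e.\ that $p(B)$ is a \emph{closed} $\mathbb{R}^{d-1}$-hypergraph. Lemma~\ref{projection-connect} as stated only gives connectedness; its proof does exhibit $p(B)$ as a union of $(d-2)$-spheres, which yields closedness, but you should make that deduction explicit before invoking Lemma~\ref{structure} one dimension down.
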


\begin{proof}
	Suppose that bridges $B$ and $B'$ overlap. Clearly, each of them must have at least $d$ vertices of attachment. If either $B$ or $B'$ is a $d$-bridge, it is easily verified that they must be skew (two equivalent $d$-bridges cannot overlap). We may therefore assume that both $B$ and $B'$ have at least $(d+1)$ vertices of attachment. 
	
	If $B$ and $B'$ are not equivalent bridges, then all the vertices of attachment of one bridge cannot lie in a single segment of the other bridge. Without loss of generality, let $C(B)\subseteq p(B)$ be a $(d-2)$-sphere which effects a partition of $S^{d-1}$ into $2$ disjoint segments $\{R_1, R_2\}$, then there exist distinct vertices $u',v'$ in vertices of attachment of $B'$ such that $u'$ and $v'$ are in different segment of $\{R_1, R_2\}$.  It follows that $B$ and $B'$ are skew. 
	
	If $B$ and $B'$ are equivalent $k$-bridges, then $k\geq d+1$. If $k\geq d+2$, $B$ and $B'$ are skew by Lemma \ref{B1}; if $k=d+1$, they are equivalent $(d+1)$-bridges.

\end{proof}

\begin{lemma}\label{B1}
	Let $G$ be an $\mathbb{R}^d$-hypergraph which is homeomorphic to $S^{d-1}$ with $|V(G)|\geq d+2$, then there is a subgraph $C$ of the $(d-2)$-skeleton of $G$ which is homeomorphic to $(d-2)$-sphere that effects a partition of $S^{d-1}$ into $2$ disjoint segments $\{R_1, R_2\}$, and there are distinct vertices $u',v'$ in vertices of attachment of $G$ such that $u'$ and $v'$ are in different segment of $\{R_1, R_2\}$. 
\end{lemma}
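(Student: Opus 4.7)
The plan is to take $C$ to be the boundary of a suitable $(d-1)$-sub-disk of $G$, obtained as the closed star of a carefully chosen $[0]$-vertex (possibly enlarged by absorbing more simplexoids).

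First I would fix any $[0]$-vertex $v \in V(G)$ and form the closed star $\overline{\mathrm{st}(v)}$, namely the union of all $(d-1)$-simplexoids of $G$ containing $v$. Because $G$ is a triangulated topological $(d-1)$-sphere, every $(d-2)$-simplexoid of $G$ is incident to exactly two $(d-1)$-simplexoids (the pseudomanifold property), and this standard fact forces $\overline{\mathrm{st}(v)}$ to be a topological $(d-1)$-disk whose boundary $\mathrm{lk}(v)$ is a subcomplex of the $(d-2)$-skeleton of $G$ homeomorphic to $S^{d-2}$. By the generalized Schoenflies theorem (Lemma~\ref{Schoenflies}), $\mathrm{lk}(v)$ cuts $G \cong S^{d-1}$ into two open $(d-1)$-balls $R_1$ (the interior of the star, containing $v$) and $R_2$ (the interior of the antistar).

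The core step is to produce a second $[0]$-vertex inside $R_2$. A vertex $u \neq v$ lies in $R_2$ if and only if $u$ shares no common $(d-1)$-simplexoid with $v$; if such a $u$ exists we set $C := \mathrm{lk}(v)$, $u' := v$, $v' := u$, and we are done. Otherwise every $[0]$-vertex of $G$ is adjacent to $v$, and I would enlarge the candidate disk by successively attaching to $\overline{\mathrm{st}(v)}$ additional $(d-1)$-simplexoids along shared $(d-2)$-disks; by Lemmas~\ref{attach} and~\ref{attach2} each attachment again produces a $(d-1)$-disk whose boundary remains a $(d-2)$-sphere inside the $(d-2)$-skeleton. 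Since $G$ is a sphere rather than a disk, this enlargement must halt at some proper sub-disk $D \subsetneq G$, and the hypothesis $|V(G)| \geq d+2$---which exceeds the $d+1$ vertices of the minimal triangulation $\partial \Delta^{d}$---leaves enough room to ensure that at the halting step both $\mathrm{int}(D)$ and $\mathrm{int}(G \setminus D)$ contain a $[0]$-vertex of $G$. Taking $C := \partial D$ and letting $u', v'$ be any two such interior $[0]$-vertices completes the construction.

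The main obstacle lies in the enlargement step when $G$ is highly neighborly: one has to maintain simultaneously that (a) $\partial D$ remains a genuine $(d-2)$-sphere rather than degenerating into some non-spherical $(d-2)$-cycle, and (b) the process halts before one of the two sides runs out of $[0]$-vertices. Invariant~(a) is controlled by attaching only along $(d-2)$-disks, since Schoenflies then keeps both $D$ and $G \setminus D$ as topological $(d-1)$-balls; invariant~(b) is precisely where the strict inequality $|V(G)| \geq d+2$ is used, as with only $d+1$ $[0]$-vertices $G$ would be forced to coincide with $\partial \Delta^d$ and no such separation could exist.
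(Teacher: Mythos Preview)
Your Case~1 is exactly the paper's argument: take $C = \mathrm{lk}(v')$ for a vertex $v'$ that has a non-neighbour $u'$, so that $v' \in R_1$ and $u' \in R_2$. The difference is that the paper never enters your Case~2 at all. Instead of fixing an arbitrary vertex and then repairing the situation by enlarging the star, the paper first argues that \emph{some} vertex $v'$ must satisfy $d_{G0}(v') \le |V(G)| - 2$: if every vertex were adjacent to every other, $G$ would be a complete $d$-uniform topological hypergraph, which cannot be homeomorphic to $S^{d-1}$ once $|V(G)| \ge d+2$. Having secured such a $v'$, any $u' \notin N_{G0}(v') \cup \{v'\}$ automatically lies in $R_2$, and the proof is finished with no further work.

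Your Case~2, as written, has a genuine gap. You assert that the enlargement ``must halt at some proper sub-disk $D \subsetneq G$'' and that $|V(G)| \ge d+2$ ``leaves enough room to ensure that at the halting step both $\mathrm{int}(D)$ and $\mathrm{int}(G \setminus D)$ contain a $[0]$-vertex'', but you supply no halting criterion and no argument for either claim. Nothing prevents the enlargement from absorbing simplexoids until the complement is a single $(d-1)$-simplexoid with no interior vertex. A concrete obstruction in $d=3$: take the triangulated $S^2$ on $\{v,a,b,c,d\}$ with triangles $vab,\,vbc,\,vcd,\,vda,\,abc,\,acd$. Here $v$ is adjacent to all four other vertices, so you land in Case~2; but enlarging $\overline{\mathrm{st}(v)}$ by either of the two remaining triangles leaves the complement a single triangle with no interior $[0]$-vertex, and your invariant~(b) fails. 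The paper's remedy in this example---pick $b$ (or $d$), which has the non-neighbour $d$ (resp.\ $b$)---is both simpler and what you should do: replace the enlargement step by the observation that a vertex of degree at most $|V(G)|-2$ always exists, so Case~2 can be avoided from the outset.
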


\begin{proof}
	Firstly, we prove that there exists a vertex $v'\in V(G)$ such that $d_{G0}(v')\leq |V(G)|-2$, if not, we know that $d_{G0}(v)=|V(G)|-1$ for every vertex $v\in V(G)$, which implies that $G$ is a complete $d$-uniform-topological hypergraph (see Definition \ref{complete}). It is impossible since $G$ is homeomorphic to $S^{d-1}$. 
	
	Let $N_{G0}(v')$ be the neighbors of $v'$, then $G[N_{G0}(v')]$ is homeomorphic to a $(d-2)$-sphere by Lemma \ref{sphere} and \ref{sphere2}, note that $G[N_{G0}(v')]$ effects a partition of $S^{d-1}$ into $2$ disjoint segments $\{R_1, R_2\}$, without loss of generality, let $v'\in R_1$. 
	
	It is easy to verify that $V(G)\backslash [N_{G0}(v') \cup \{v'\}] \neq \phi$ since $|V(G)|\geq d+2$, let $u'\in V(G)\backslash [N_{G0}(v') \cup \{v'\}]$, it follows that $u' \in R_2$. 
	
\end{proof}

\begin{lemma}\label{sphere2}
	In a closed $\mathbb{R}^d$-hypergraph $G$, if $G$ is homeomorphic to $S^{d-1}$, the $1$-skeleton of $G$ is $d$-connected. 
\end{lemma}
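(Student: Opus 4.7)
The plan is to argue by contradiction, converting a combinatorial separator of the $1$-skeleton into a topological separator of $S^{d-1}$, and then ruling the latter out via Alexander duality. Suppose the $1$-skeleton of $G$ is not $d$-connected, so there exist $S\subseteq V(G)$ with $|S|\leq d-1$ and a partition $V(G)\setminus S=A\sqcup B$ into nonempty pieces admitting no $1$-skeleton edge between $A$ and $B$. By Definition~\ref{def-incident}, any two vertices lying in a common $(d-1)$-simplexoid are adjacent in the $1$-skeleton, so every top-dimensional simplexoid of $G$ has all of its non-$S$ vertices in either $A$ or $B$ alone. I would denote by $G_A$, $G_B$ the subcomplexes spanned by the simplexoids of the two types, and by $K_S$ the subcomplex spanned by $S$ alone; then $|G_A|\cup|G_B|=|G|$ and $|G_A|\cap|G_B|\subseteq|K_S|$.

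Next I would show that $|K_S|$ topologically separates $|G|\cong S^{d-1}$. For each $v\in A$, every simplexoid containing $v$ lies in $G_A$ but not in $K_S$, so the open star $\mathrm{st}(v)\subseteq|G|$ is disjoint from $|K_S|$. Setting $U_A=\bigcup_{v\in A}\mathrm{st}(v)$ and $U_B=\bigcup_{v\in B}\mathrm{st}(v)$, the fact that no simplexoid can contain vertices of both $A$ and $B$ (lest an $A$-$B$ edge appear in the $1$-skeleton) forces $U_A\cap U_B=\emptyset$, while every point in the interior of a simplexoid not belonging to $K_S$ automatically lies in $U_A\cup U_B$. Hence $|G|\setminus|K_S|=U_A\sqcup U_B$ is a decomposition into two nonempty disjoint open sets.

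The main obstacle is then to derive a contradiction by proving that $|K_S|$ cannot actually disconnect $S^{d-1}$. Because $K_S$ has at most $d-1$ vertices, it is a subcomplex of the abstract $(d-2)$-simplex $\Delta$ on $S$. Either $K_S$ contains the top face of $\Delta$, forcing $K_S=\Delta$ and hence $|K_S|$ contractible; or $K_S$ omits it, forcing $\dim|K_S|\leq d-3$. In both cases $\tilde{H}^{d-2}(|K_S|)=0$. Alexander duality \cite{hatcher2002algebraic} applied to the compact polyhedron $|K_S|\subseteq S^{d-1}$ then yields
\[
\tilde{H}_0\bigl(S^{d-1}\setminus|K_S|\bigr)\;\cong\;\tilde{H}^{d-2}(|K_S|)\;=\;0,
\]
so $S^{d-1}\setminus|K_S|$ is path-connected, contradicting the decomposition above and completing the proof.
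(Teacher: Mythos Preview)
Your argument is correct and follows a genuinely different route from the paper's. The paper proceeds by contracting each of the two components $X$ and $Y$ of the $1$-skeleton minus the putative $(d-1)$-cut $\{v_1,\dots,v_{d-1}\}$ to single vertices $x$ and $y$, obtaining a complex $G'$ on the $d+1$ vertices $\{x,y,v_1,\dots,v_{d-1}\}$; it then asserts that because $G$ is built from $(d-1)$-simplexoids and is homeomorphic to $S^{d-1}$, the $1$-skeleton of $G'$ must be $K_{d+1}$, contradicting the persistence of the cut. Your approach instead converts the combinatorial separator directly into a topological one: the full subcomplex $K_S$ on the cut set disconnects $|G|\cong S^{d-1}$ via the open-star decomposition, and Alexander duality then rules this out, since a subcomplex on at most $d-1$ vertices is either contractible or of dimension at most $d-3$, forcing $\tilde{H}^{d-2}(|K_S|)=0$. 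Your argument buys rigor and transparency on the topological side---it does not depend on how simplexoid contraction interacts with the homeomorphism type, nor on any implicit appeal to the uniqueness of the minimal triangulation of $S^{d-1}$---at the cost of importing Alexander duality. The paper's argument is shorter and remains within its combinatorial contraction framework, but the key step that the $1$-skeleton of $G'$ is forced to be $K_{d+1}$ is asserted without elaboration.
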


\begin{proof}
	Suppose that the 1-skeleton of $G$ (denoted by $G_0$) is not $d$-connected. Then there exists a $(d-1)$-cut $\{v_1, v_2, \ldots, v_{d-1}\}$ such that removing these vertices from $G_0$ disconnects the graph. Let $X$ and $Y$ be two connected components of the resulting graph. 
	
	Contract $X$ into a single vertex $x$, and contract $Y$ into a single vertex $y$. Denote the resulting graph after contraction by $G'$. Since $G'$ becomes disconnected after removing the set $\{v_1, v_2, \ldots, v_{d-1}\}$, its connectivity is strictly less than $d$.
	
	On the other hand, because $G_0$ is composed of $(d-1)$-simplices and is homeomorphic to $S^{d-1}$, the 1-skeleton of $G'$ must be the complete graph $K_{d+1}$. It follows that the connectivity of $G'$ is $d$, leading to a contradiction.
	
\end{proof}

\begin{lemma}\label{sphere3}
	In a closed $\mathbb{R}^d$-hypergraph $G$, the $1$-skeleton of $G$ is $d$-connected. 
\end{lemma}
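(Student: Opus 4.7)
The plan is to combine the decomposition from Lemma~\ref{structure} with the single-sphere case (Lemma~\ref{sphere2}) through an inductive gluing argument. First, Lemma~\ref{structure} lets me write $S^d\setminus G=D_1\sqcup\cdots\sqcup D_m$ with each $D_i$ an open $d$-ball, and correspondingly $G=T_1\cup\cdots\cup T_m$ where $T_i=\partial D_i$ is a $(d-1)$-sphere. Each $T_i$ is itself a closed $\mathbb{R}^d$-hypergraph homeomorphic to $S^{d-1}$, because $\partial D_i$ is locally a $(d-1)$-manifold (via Lemma~\ref{Schoenflies} applied to the closure of $D_i$), so every $(d-2)$-face of a $(d-1)$-simplexoid in $T_i$ is shared with exactly one other $(d-1)$-simplexoid of $T_i$. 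Therefore Lemma~\ref{sphere2} gives that each $T_i^{(1)}$ is $d$-connected.

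The core tool is the following gluing claim: \emph{if $H_1,H_2$ are graphs that are both $d$-connected and satisfy $|V(H_1)\cap V(H_2)|\geq d$, then $H_1\cup H_2$ is also $d$-connected}. To see this, pick any $C\subseteq V(H_1\cup H_2)$ with $|C|\leq d-1$; then each $H_i-C$ is connected by $d$-connectedness, and since $|V(H_1)\cap V(H_2)\setminus C|\geq d-(d-1)=1$, the subgraphs $H_1-C$ and $H_2-C$ share at least one vertex, so their union is connected.

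Finally I would induct using the dual face-adjacency graph on $\{D_1,\ldots,D_m\}$, where $D_i$ and $D_j$ are adjacent precisely when they share a $(d-1)$-simplexoid $\sigma$ on their boundaries; such adjacency yields $\sigma\subseteq T_i\cap T_j$ and hence $|V(T_i)\cap V(T_j)|\geq d$. This dual graph is connected because $S^d$ minus the $(d-2)$-skeleton of $G$ is connected (a codimension-$2$ subcomplex cannot disconnect $S^d$) and any arc joining two $d$-balls in that complement must pass through an open $(d-1)$-simplexoid. Ordering the spheres along a spanning tree of this dual graph and applying the gluing claim at each addition shows inductively that $(T_1\cup\cdots\cup T_i)^{(1)}$ is $d$-connected for all $i$, and taking $i=m$ completes the proof.

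The main obstacle I anticipate is the first step: making precise that each boundary sphere $T_i$ really does qualify as a closed $\mathbb{R}^d$-hypergraph satisfying the hypotheses of Lemma~\ref{sphere2}. This requires confirming the local manifold structure of $\partial D_i$ around each $(d-2)$-face and verifying the homotopy and pendant-freeness conditions for $T_i$ as a subcomplex of $G$.
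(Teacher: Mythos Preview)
Your approach is genuinely different from the paper's. The paper argues in one line: given any two vertices $u,v$, pick a $(d-1)$-sphere in $G$ containing both and apply Lemma~\ref{sphere2} to obtain $d$ internally disjoint $u$--$v$ paths. You instead decompose $G$ into the frontiers $T_i=\partial D_i$, apply Lemma~\ref{sphere2} to each, and glue their $1$-skeletons along a spanning tree of the face-adjacency dual graph. Your gluing claim and the dual-graph connectivity argument are both correct.

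There is, however, a genuine gap, and it sits exactly where you suspected but runs deeper than you describe. The passage from ``$D_i$ is an open $d$-ball'' (which is all Lemma~\ref{structure} provides) to ``$T_i=\partial D_i$ is a $(d-1)$-sphere'' is not justified, and Lemma~\ref{Schoenflies} does not supply it: Schoenflies goes in the opposite direction (embedded sphere $\Rightarrow$ complementary components are balls), not the direction you need. In fact the frontier of an open $d$-ball in $S^d$ can fail to be a sphere. For $d=3$, let $G$ be two tetrahedral boundaries sharing a single vertex $p$; this is a closed $\mathbb{R}^3$-hypergraph (no pendant simplexoid, $\pi_1=0$, embeds in $\mathbb{R}^3$). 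The exterior region is an open $3$-ball---inversion at $p$ sends the two solid tetrahedra to disjoint half-spaces, so the exterior becomes an open slab---but its frontier is $S^2\vee S^2$, and Lemma~\ref{sphere2} cannot be applied to it. Worse, the $1$-skeleton of this $G$ is two copies of $K_4$ joined at $p$, hence not even $2$-connected; so the obstacle is not a technicality to be patched but a structural failure of the argument on this example. The paper's proof is equally silent on why a common $(d-1)$-sphere through $u$ and $v$ should exist, and the same example obstructs that step as well.
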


\begin{proof}
	Let $u$ and $v$ be any two vertices in $G$. Let $S^{d-1}$ be a ${d-1}$-dimensional sphere containing both $u$ and $v$. By Lemma \ref{sphere2}, the $1$-skeleton of $S^{d-1}$ must contain $d$ internally disjoint paths connecting $u$ and $v$. 
	
	Therefore, the $1$-skeleton of $G$ must be $d$-connected. 
	
\end{proof}

\section{Hyper ear decomposition}
\label{section-ED}

In this section, we aim to establish some lemmas of {\it ear decomposition} in higher-dimensional spaces.
Let $K$ be an $\mathbb{R}^d$-hypergraph whose $1$-skeleton is $d$-connected. Note that the $d$-connected closed $\mathbb{R}^d$-hypergraph contains a subgraph $G_0$ which is homeomorphic to $S^{d-1}$. We describe here a simple recursive procedure for generating any such hypergraph starting with an arbitrary $(d-1)$-sphere of the $\mathbb{R}^d$-hypergraph. 

\begin{definition}[{\it hyper ear}]\label{ear}
	Let $F$ be a subgraph of an $\mathbb{R}^d$-hypergraph $G$. A {\it hyper ear} of $F$ in $G$ is a nontrivial $(d-1)$-ball in $G$ whose boundary lies in $F$ but whose internal vertices do not. 
\end{definition}

\begin{definition}[{\it hyper ear decomposition}]\label{hyper-ear-decomposition}
	A nested sequence of a closed $\mathbb{R}^d$-hypergraph is a sequence $(G_0,G_1,\ldots,G_k)$ of $\mathbb{R}^d$-hypergraphs such that $G_i\subseteq G_{i+1}$, $0\leq i\leq k$. A {\it hyper ear decomposition} of a $d$-connected closed $\mathbb{R}^d$-hypergraph $G$ is a nested sequence $(G_0,G_1,\ldots,G_k)$ of $d$-connected subgraphs of $G$ such that: 
	
	\begin{itemize}
		\item $G_0$ is homeomorphic to $S^{d-1}$.
		\item $G_{i+1}= G_i \cup P_i$ where $P_i$ is a hyper ear of $G_i$ in $G$ for $0\leq i\leq k$. 
		\item $G_k=G$.
	\end{itemize}
\end{definition}

\begin{lemma}\label{ear-exist}
	The closed $\mathbb{R}^d$-hypergraph $G$ with $|V(G)|\geq d+1$ has a hyper ear decomposition. 
	
\end{lemma}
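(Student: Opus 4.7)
My plan is to proceed by induction on the number of $(d-1)$-simplexoids of $G$ lying outside the current subgraph. For the base case, I would exhibit a $(d-1)$-sphere $G_0 \subseteq G$: by the discussion following Definition~\ref{special0} we may regard $G$ as embedded in $S^d$, and Lemma~\ref{structure} then guarantees that every connected component of $S^d \setminus G$ is an open $d$-ball. I pick any such ball and take its topological boundary, a $(d-1)$-sphere inside $G$, as $G_0$. Lemma~\ref{sphere2} shows the $1$-skeleton of $G_0$ is $d$-connected, so $G_0$ satisfies the opening requirements of Definition~\ref{hyper-ear-decomposition}.

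For the inductive step, assume $G_i \subsetneq G$ has been built with $d$-connected $1$-skeleton. Applying Lemma~\ref{structure} to $G_i$ (viewed in $S^d$), the complement $S^d \setminus G_i$ decomposes into open $d$-balls; since $G_i \neq G$, at least one such cell $C$ must have interior containing $(d-1)$-simplexoids of $G$. Applying Lemma~\ref{structure} now to $G$ itself, the simplexoids of $G$ inside $\bar C$ further subdivide $C$ into finitely many sub-cells, each again an open $d$-ball.

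To extract the hyper ear, I would choose an outermost sub-cell $C' \subset C$ of $S^d \setminus G$ whose bounding $(d-1)$-sphere $\partial C'$ meets $\partial C$ in a single $(d-1)$-disc $D$. The complementary cap $P_i := \overline{\partial C' \setminus D}$ is then a $(d-1)$-ball whose boundary $(d-2)$-sphere $\partial D$ lies in $\partial C \subseteq G_i$ and whose interior sits in $G \setminus G_i$; this serves as the hyper ear of $G_i$ in $G$ demanded by Definition~\ref{ear}. Setting $G_{i+1} := G_i \cup P_i$, the $1$-skeleton remains $d$-connected: $P_i$ is attached along the $(d-2)$-sphere $\partial D$, whose $1$-skeleton is $(d-1)$-connected by Lemma~\ref{sphere2} in one lower dimension, and any hypothetical $(d-1)$-cut of $G_{i+1}$ either restricts to one of $G_i$ or lies on a $(d-1)$-sphere of $G_{i+1}$, contradicting the inductive hypothesis or Lemma~\ref{sphere3}. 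Finiteness of $G$ ensures the process terminates with $G_k = G$.

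The main obstacle I anticipate is rigorously establishing the existence of an outermost sub-cell $C'$ whose intersection $\partial C' \cap \partial C$ is exactly a single $(d-1)$-disc, rather than some disconnected or topologically exotic subset of $\partial C$. I expect this to require an innermost/outermost choice argument combined with the generalized Schoenflies theorem (Lemma~\ref{Schoenflies}), passing if necessary to a minimal cell $C$ (one minimizing the number of simplexoids of $G$ in its interior) so that the intersection is forced to be locally flat and simply connected. A secondary technical point is the nontriviality clause in Definition~\ref{ear}: in degenerate configurations $P_i$ might have all its vertices on $\partial C$, in which case I would adjust the choice of $C'$ to touch a simplexoid of $G$ carrying a vertex not already in $G_i$, or else absorb such degenerate $P_i$ into a subsequent ear.
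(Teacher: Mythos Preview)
Your approach is essentially the same as the paper's, only far more detailed. The paper's proof is a three-sentence sketch: it invokes Lemma~\ref{sphere3} to get $d$-connectivity of the $1$-skeleton, observes that $|V(G)|\ge d+1$ forces at least one $(d-1)$-sphere in $G$, and then simply asserts that because $G$ is (by Definition~\ref{special0} and Lemma~\ref{structure}) a finite union of $(d-1)$-spheres, the existence of a hyper ear decomposition is ``obvious.'' Your cell-by-cell extraction of ears via outermost sub-cells of $S^d\setminus G_i$, together with your connectivity maintenance argument, is precisely the work the paper hides behind that word. The technical obstacle you flag---that $\partial C'\cap\partial C$ be a single $(d-1)$-disc---is real and is exactly the kind of detail the paper does not address; your suggested remedy via a minimal/outermost choice and Lemma~\ref{Schoenflies} is the natural way to handle it.
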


\begin{proof}
	On the one hand, the $1$-skeleton of $G$ is $d$-connected by Lemma \ref{sphere3}. On the other hand, since $G$ contains at least $(d+1)$ vertices, it must contain at least one $(d-1)$-dimensional sphere $S^{d-1}$.
	
	Since $G$ is homeomorphic to the union of a finite collection of $(d-1)$-spheres by Definition \ref{special0} and Lemma \ref{structure}, it is obvious that $G$ has a hyper ear decomposition. 
\end{proof}

\begin{lemma}\label{E1}
	In a closed $\mathbb{R}^d$-hypergraph $G$ with $|V(G)|\geq d+1$, if the $1$-skeleton of $G$ is $d$-connected, then each maximal connected region or $\mathbb{R}^d\backslash G$ is bounded by a $(d-1)$-sphere. 
	
\end{lemma}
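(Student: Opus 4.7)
The plan is to use the hyper ear decomposition supplied by Lemma \ref{ear-exist} and induct on the length of the decomposition. Fix a decomposition $(G_0, G_1, \ldots, G_k = G)$ with $G_0$ homeomorphic to $S^{d-1}$ and $G_{i+1} = G_i \cup P_i$, and maintain as inductive invariant that every connected component of $\mathbb{R}^d \setminus G_i$ is bounded by a $(d-1)$-sphere. The base case is immediate: $G_0$ is itself a locally flat $(d-1)$-sphere, so by the generalized Schoenflies theorem (Lemma \ref{Schoenflies}), applied in the one-point compactification $S^d$, the complement $\mathbb{R}^d \setminus G_0$ has exactly two components, each bounded by $G_0$.

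For the inductive step, consider a hyper ear $P_i$, a $(d-1)$-ball whose boundary $\partial P_i$ is a $(d-2)$-sphere contained in $G_i$ and whose interior is disjoint from $G_i$. The interior of $P_i$ is connected and meets $\mathbb{R}^d \setminus G_i$, so it sits inside the closure of a single component $R$ of $\mathbb{R}^d \setminus G_i$. Local flatness of $P_i$ along $\partial P_i$, guaranteed by the simplicial CW structure of $G$, forces both transversal sides of $P_i$ near $\partial P_i$ to lie in $R$, so $\partial P_i \subseteq \partial R$ and $P_i$ splits $R$ into exactly two pieces $R_1, R_2$. Since $\partial R$ is a $(d-1)$-sphere by the inductive hypothesis and $\partial P_i$ is a locally flat $(d-2)$-sphere inside it, one more application of Lemma \ref{Schoenflies} one dimension lower cuts $\partial R$ into two $(d-1)$-disks $D_1, D_2$ meeting along $\partial P_i$. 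Then $\partial R_j = D_j \cup P_i$ for $j=1,2$, each being the gluing of two $(d-1)$-disks along their common $(d-2)$-sphere boundary, is itself homeomorphic to $S^{d-1}$. All other regions are unaffected, so the invariant propagates to $G_{i+1}$, and after $k$ steps we recover the conclusion for $G = G_k$.

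The main obstacle I anticipate is the geometric separation step: verifying that $P_i$ actually partitions the host region $R$ into exactly two pieces rather than leaving it connected or producing more than two. This hinges on local flatness of $P_i$ at its boundary together with a Jordan-Brouwer type separation argument, both of which follow from the CW structure but require some care to spell out. Once this is pinned down, the remainder is merely iterated use of Schoenflies in decreasing dimensions, so no further topological subtleties are needed.
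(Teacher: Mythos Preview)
Your proposal is correct and follows essentially the same approach as the paper: induct along the hyper ear decomposition $(G_0,\ldots,G_k)$ from Lemma~\ref{ear-exist}, handle the base case via Schoenflies on $G_0\cong S^{d-1}$, and in the inductive step observe that the ear $P_i$ lies in a single region $f$ of $G_i$ and splits it into two regions each bounded by a $(d-1)$-sphere. Your write-up is in fact more careful than the paper's, which simply asserts that ``it is easy to see that these regions are also bounded by $(d-1)$-spheres'' without invoking Schoenflies in the lower dimension or discussing the separation of $f$ by $P_i$.
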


\begin{proof}
	Note that $G$ has a hyper ear decomposition by Lemma \ref{ear-exist}. 
	Consider an ear decomposition $(G_0,G_1,...,G_k)$ of $G$, where $G_0$ is homeomorphic to $S^{d-1}$, $G_k= G$, and, for $0\leq i \leq k-2$, $G_{i+1}= G_i \cup P_i$ is a $d$-connected subgraph of $G$, where $P_i$ is an ear of $G_i$ in $G$. Since $G_0$ is homeomorphic to $S^{d-1}$, the two maximal connected regions of $G_0$ are clearly bounded by a $(d-1)$-sphere. Assume, inductively, that all maximal connected regions of $G_i$ are bounded by $(d-1)$-spheres, where $i\geq 0$. Because $G_{i+1}$ is a $d$-connected $\mathbb{R}^d$-hypergraph, the ear $P_i$ of $G_i$ is contained in some maximal connected region $f$ of $G_i$. Each region of $G_i$ other than $f$ is a region of $G_{i+1}$ as well, and so, by the  induction hypothesis, is bounded by a $(d-1)$-sphere. On the other hand, the region $f$ of $G_i$ is divided by $P_i$ into two regions of $G_{i+1}$, and it is easy to see that these regions are also bounded by $(d-1)$-spheres.

\end{proof}

\begin{lemma}\label{sphere}
	In a closed $\mathbb{R}^d$-hypergraph $G$, if the $1$-skeleton of $G$ is $(d+1)$-connected, the neighbors of any vertex lie on a common $(d-1)$-sphere. 
\end{lemma}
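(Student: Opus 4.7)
The plan is to analyze the local structure of $G$ around an arbitrary vertex $v$ via its star and link, show that the link is homeomorphic to $S^{d-2}$ under the $(d+1)$-connectivity hypothesis, and then assemble a $(d-1)$-sphere in $G$ by capping the link on both sides.

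First I would fix a vertex $v$ and consider its star $\mathrm{st}(v) := \bigcup\{\sigma \in A_{d-1}(G) : v \in V(\sigma)\}$ together with its link $\mathrm{lk}(v)$, the subcomplex whose $(d-2)$-simplexoids are exactly the faces $\sigma \setminus \{v\}$ for $\sigma \in \mathrm{st}(v)$; the $[0]$-vertex set of $\mathrm{lk}(v)$ is precisely $N_G(v)$. Using the closedness of $G$, I would verify that $\mathrm{lk}(v)$ itself has no pendant sub-simplexoid in dimension $d-2$: every $(d-3)$-face $\rho$ of a $(d-2)$-simplexoid $\tau \in \mathrm{lk}(v)$ corresponds to a $(d-2)$-face $\rho \cup \{v\}$ of $\tau \cup \{v\} \in \mathrm{st}(v)$, which by closedness of $G$ lies in another $(d-1)$-simplexoid $\sigma'$; since $\sigma'$ contains $v$, $\sigma' \setminus \{v\}$ is another $(d-2)$-simplexoid of $\mathrm{lk}(v)$ sharing $\rho$ with $\tau$.

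Next I would promote this combinatorial closure to geometric sphericity. The $(d+1)$-connectivity of the $1$-skeleton $G_0$ of $G$ implies $d$-connectivity of $G_0 - v$, so by Menger's theorem any two neighbors $u, u'$ of $v$ are joined by $d$ internally disjoint paths in $G_0 - v$. By a local rerouting through the simplexoids of $\mathrm{st}(v)$ (each edge incident to $N_G(v)$ can be traded for an edge lying in the $1$-skeleton of $\mathrm{lk}(v)$), these paths transport into $d$ internally disjoint paths inside the $1$-skeleton of $\mathrm{lk}(v)$. Combined with the closedness established above, an inductive analog of Lemma \ref{sphere2} applied one dimension lower then yields $\mathrm{lk}(v) \cong S^{d-2}$. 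Consequently, $\mathrm{st}(v)$, being the cone on $\mathrm{lk}(v)$ with apex $v$, is a $(d-1)$-disk with boundary $\mathrm{lk}(v)$.

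Finally I would pair $\mathrm{st}(v)$ with a matching disk on the far side of $\mathrm{lk}(v)$ to produce the sphere. By Lemmas \ref{structure} and \ref{E1}, each region of $S^d \setminus G$ is an open $d$-ball bounded by a $(d-1)$-sphere contained in $G$; the link $\mathrm{lk}(v) \cong S^{d-2}$ separates a neighborhood of $v$ in $S^d$ into the side occupied by $\mathrm{st}(v)$ and an opposite side, and the closures of the opposite-side regions intersected with $G$ assemble into a disk $D \subseteq G$ with $\partial D = \mathrm{lk}(v)$ and $D \cap \mathrm{st}(v) = \mathrm{lk}(v)$. The generalized Schoenflies theorem (Lemma \ref{Schoenflies}) certifies $D \cong D^{d-1}$, so $S := \mathrm{st}(v) \cup D$ is a subgraph of $G$ homeomorphic to $S^{d-1}$ satisfying $N_G(v) = V(\mathrm{lk}(v)) \subseteq V(S)$, as required. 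The hardest point will be the connectivity upgrade, as $d$ internally disjoint paths in $G_0 - v$ need not lie inside the edge set of $\mathrm{lk}(v)$; a careful local rerouting through the simplexoids of $\mathrm{st}(v)$ will be essential. A secondary subtlety is checking that the far-side regions fuse into a single $(d-1)$-disk rather than several disjoint components, which again hinges on the $(d+1)$-connectivity ruling out local pathologies around $v$.
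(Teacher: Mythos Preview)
Your route through the star and link is considerably more elaborate than the paper's, and its central claim is in fact false under the stated hypotheses.

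The paper's proof is essentially two lines: delete $v$, note that the $1$-skeleton of $G-v$ is still $d$-connected, and invoke Lemma~\ref{E1} so that every complementary region of $G-v$ is bounded by a $(d-1)$-sphere; the region $f$ in which $v$ sat then has $\partial f\cong S^{d-1}$, and every neighbor of $v$ lies on $\partial f$ because each $(d-1)$-simplexoid through $v$ was contained in $\overline f$. No analysis of $\mathrm{lk}(v)$ enters at all.

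Your argument hinges on $\mathrm{lk}(v)\cong S^{d-2}$, but this need not hold. For $d=3$, let $G$ be two octahedral $2$-spheres glued along their common equatorial $4$-cycle $c_1c_2c_3c_4$, with poles $a,a'$ on one sphere and $b,b'$ on the other. This is a simply connected closed $\mathbb{R}^3$-hypergraph whose $1$-skeleton is $4$-connected, yet the link of $c_1$ is the edge set $\{ac_2,ac_4,a'c_2,a'c_4,bc_2,bc_4,b'c_2,b'c_4\}$, i.e.\ the bipartite graph $K_{2,4}$, not a circle. Hence $\mathrm{st}(v)$ is not a single $(d-1)$-disk and the capping construction never begins. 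The same example shows why the rerouting step cannot work as stated: internally disjoint paths in $G_0-v$ between two neighbors of $v$ may leave the star entirely, and there is no mechanism to push them into the $1$-skeleton of $\mathrm{lk}(v)$ without merging. Finally, invoking ``an inductive analog of Lemma~\ref{sphere2} one dimension lower'' is circular, since Lemma~\ref{sphere2} presupposes that the complex in question is already homeomorphic to a sphere, which is exactly what you are trying to establish for $\mathrm{lk}(v)$.

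The lemma only asserts that the neighbors of $v$ lie on \emph{some} $(d-1)$-sphere in $G$; it does not assert that $\mathrm{st}(v)$ is a disk or that $\mathrm{lk}(v)$ is a sphere. The paper obtains that sphere not by assembling it locally around $v$ but globally, as the boundary of a face of $G-v$.
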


\begin{proof}
	Let $v$ be a vertex of $G$, then the $1$-skeleton of $G-v$ is $d$-connected, so each maximal connected region of $G-v$ is bounded by a sphere by Lemma \ref{E1}. If $f$ is the region of $G-v$ in which the vertex $v$ was situated, the neighbors of $v$ lie on its bounding sphere $\partial(f)$. 
	
\end{proof}

\section{Recognizing $\mathbb{R}^d$-hypergraph}\label{recognizing}

We extend the definition of $S$-component for graphs to higher-dimensional spaces before proving the main result. 

\subsection{$S$-component}

\begin{definition}[{\it $S$-component}]
	Let $G$ be a connected $d$-uniform topological hypergraph which is not complete, let $S$ be a vertex cut of $G$, and let $X$ be the vertex set of a component of $G-S$. The subgraph $H$ of $G$ induced by $S\cup X$ is called an $S$-component of $G$. In the case where $G$ is a closed $d$-uniform topological hypergraph, the $1$-skeleton of $G$ is $d$-connected, and $S := \{x_1,x_2, ..., x_d\}$ is a $d$-vertex cut of $G$, we find it convenient to modify each $S$-component by adding a new $(d-1)$-simplex with vertex set $\{x_1,x_2, ..., x_d\}$. We refer to this simplex as a marker simplex and the modified $S$-components as marked $S$-components. The set of marked $S$-components constitutes the marked $S$-decomposition of $G$. $G$ can be recovered from its marked $S$-decomposition by taking the union of its marked $S$-components and deleting the marker hyperedge.

\end{definition}

As shown in Figure \ref{new12}, $S := \{x_1,x_2, x_3\}$ be a 3-cut of an $\mathbb{R}^3$-hypergraph $G$, 
we provide an example of the $S$-decomposition and marked $S$-decomposition of an $\mathbb{R}^3$-hypergraph. The only difference between $S$-decomposition and marked $S$-decomposition is that marked $S$-decomposition must contain a simplex with vertex set $S := \{x_1,x_2, x_3\}$. If this simplex does not exist in the original hypergraph, it needs to be added during the construction. 

\begin{figure}
	\centering     
	\includegraphics[width=1\linewidth]{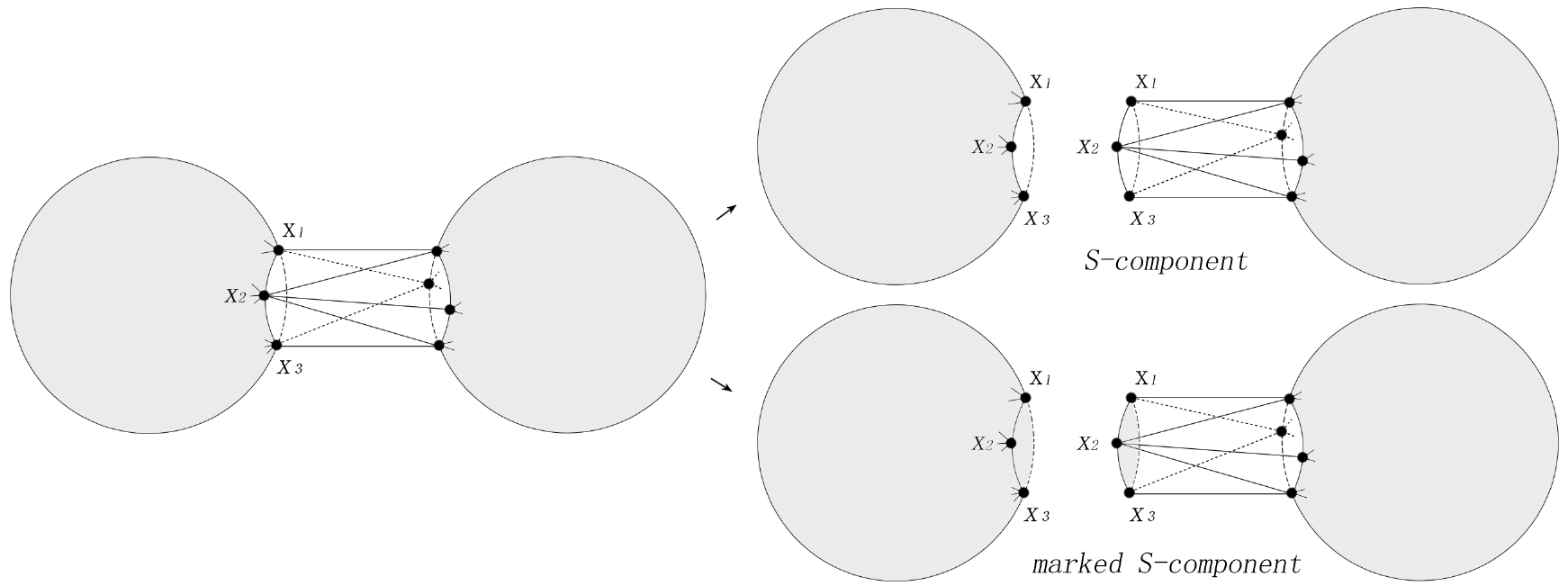}
	\caption{An example of $S$-decomposition and marked $S$-decomposition of $\mathbb{R}^3$-hypergraph.}
	\label{new12}
\end{figure}

We need to establish some lemmas before proving the main results.

\begin{lemma}\label{10.33}
	Let $G$ be a closed $\mathbb{R}^d$-hypergraph with a $d$-vertex cut $\{x_1,x_2, ..., x_d\}$, then each marked $\{x_1,x_2, ..., x_d\}$-component of $G$ is isomorphic to a minor of $G$. 
\end{lemma}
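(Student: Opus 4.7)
The plan is to reduce $G$, by deletions and contractions, to a hypergraph isomorphic to $\hat H$. Let $H$ be the underlying $S$-component, so that $\hat H=H\cup M$ where $M$ is the marker simplexoid on $\{x_1,\ldots,x_d\}$. Because $S$ is a $d$-vertex cut, $G-S$ has at least two connected components, so there is at least one further $S$-component $H^\star\neq H$. I would first delete every vertex and simplexoid of $G$ not in $H\cup H^\star$, leaving $H$ intact (since our operations on the rest must not touch it) and concentrating all remaining work on $H^\star$. It therefore suffices to exhibit a sequence of deletions and contractions inside $H^\star$ producing exactly one $(d-1)$-simplexoid on vertex set $\{x_1,\ldots,x_d\}$, which will serve as the marker $M$.

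To build the marker from $H^\star$, I would pick any $(d-1)$-simplexoid $\tau\subseteq H^\star$ with vertex set $\{y_1,\ldots,y_d\}$ and then find $d$ pairwise internally disjoint $1$-skeletal paths $P_1,\ldots,P_d$ inside $H^\star$ that join $\{y_1,\ldots,y_d\}$ bijectively to $\{x_1,\ldots,x_d\}$, with every internal vertex lying in $V(H^\star)\setminus S$. Once these are available, I would delete every $(d-1)$-simplexoid of $H^\star$ that is not incident to $\tau\cup P_1\cup\cdots\cup P_d$, and then contract the paths $P_i$ one $1$-simplexoid at a time from the $y_i$-end inward, each contraction fusing two adjacent vertices. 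After the last contraction on $P_i$, the vertex $y_i$ is identified with $x_{j(i)}$, so the image of $\tau$ becomes a $(d-1)$-simplexoid with vertex set exactly $\{x_1,\ldots,x_d\}$. Any parallel simplexoids created incidentally are merged as per the definition of simplexoid contraction, yielding precisely the marker $M$ and hence $\hat H$ as a minor of $G$.

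The main obstacle is the Menger-type step producing the $d$ internally disjoint paths inside $H^\star$ with the required bijective attachment to $S$. Lemma \ref{sphere3} guarantees $d$-connectivity of the $1$-skeleton of $G$ globally, but I need a localized statement about $H^\star$. I would argue that, after adjoining the (hypothetical) marker simplexoid $M$, the $1$-skeleton of $H^\star\cup M$ is $d$-connected: any cut of size less than $d$ would, together with $S$, yield a cut of the $1$-skeleton of $G$ of size less than $d$, contradicting Lemma \ref{sphere3}. From this, a fan-lemma or Menger argument applied to the pair $(V(\tau),S)$ in $H^\star\cup M$ furnishes the required paths (and after identifying $V(\tau)$ with $S$ the fictitious simplexoid $M$ does not interfere because it can be routed around). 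A secondary concern is verifying that the contractions never identify two distinct cut vertices $x_i,x_j$ with each other and leave the subgraph $H$ entirely unaltered; both follow because each contracted simplexoid is chosen to lie in $H^\star\setminus H$ and to contain at most one vertex of $S$, so the cut vertices remain pairwise distinct throughout the process.
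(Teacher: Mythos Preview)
Your approach is correct and genuinely different from the paper's. The paper contracts the other $S$-component $H'$ to the marker by invoking the hyper ear decomposition (Lemma~\ref{ear-exist}): it asserts there is a $(d-1)$-ball $B\subseteq H'$ with $B\cup e\cong S^{d-1}$, and then contracts $H'$ down to the single simplexoid $e$ in one stroke. Your argument is purely combinatorial on the $1$-skeleton: you pick an arbitrary $(d-1)$-simplexoid $\tau$ in $H^\star$, use a Menger-type statement to link $V(\tau)$ bijectively to $S$ by $d$ internally disjoint paths, and contract along those paths so that $\tau$ becomes the marker. This avoids the topological ear-decomposition machinery entirely and rests only on Lemma~\ref{sphere3} ($d$-connectivity of the $1$-skeleton of a closed $\mathbb{R}^d$-hypergraph). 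The paper's route is terser but implicitly assumes that the marked component $H'\cup e$ is itself a closed $\mathbb{R}^d$-hypergraph so that Lemma~\ref{ear-exist} applies---a fact only established later in Lemma~\ref{10.34}; your argument sidesteps that circularity.

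One point in your write-up needs tightening. Your justification of the Menger step is garbled: you say a cut of size less than $d$ in $H^\star\cup M$ would, ``together with $S$,'' yield a cut of $G$ of size less than $d$, but adjoining $S$ to a set of size $<d$ produces a set of size up to $2d-1$. The clean argument is direct and does not require the fictitious marker $M$ at all. Choose $\tau$ to contain at least one internal vertex $w$ of $H^\star$ (possible since an $S$-component has nonempty interior). If some $T$ with $|T|<d$ separated $V(\tau)$ from $S$ in the $1$-skeleton of $H^\star$, then since every path in $G$ from $w$ to the other $S$-components must pass through $S$, the same $T$ would separate $w$ from those components in $G$, contradicting Lemma~\ref{sphere3}. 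Equivalently, run Menger in $G$ itself and observe that each of the $d$ resulting $V(\tau)$--$S$ paths stays inside $H^\star$ up to its first contact with $S$.
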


\begin{proof}
	Let $H$ be an $\{x_1,x_2, ..., x_d\}$-component of $G$, with marker simplex $e$. Let $H'$ be another $\{x_1,x_2, ..., x_d\}$-component of $G$, with marker simplex $e$, then there is a $(d-1)$-ball $B$ such that $B\subseteq H'$ and $B\cup e$ is a $(d-1)$-sphere by Lemma \ref{ear-exist}. It is easy to verify that $H$ is isomorphic to a minor of $G$ by contract $H'$ into a single simplex $e$. 
	
\end{proof}

\begin{lemma}\label{Exercise 10.4.1}
	Let $G_1$ and $G_2$ be closed $\mathbb{R}^d$-hypergraphs whose intersection is isomorphic to an $\mathbb{R}^{d-1}$-hypergraph $K_d$ with $V(K_d)= \{x_1,x_2, ..., x_d\}$ and 
	\[
	E(K_d)= \{a_i|a_i \text{ is a }(d-2)\text{-simplex with vertex set }V(K_d)\backslash \{x_i \} \text{,  }(i\in \{1,2,...,d\}) \},
	\] 
	then $G_1\cup G_2$ is a closed $\mathbb{R}^d$-hypergraph. 
\end{lemma}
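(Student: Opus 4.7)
The plan is to verify each defining property of a closed $\mathbb{R}^d$-hypergraph in turn: the sub-simplexoid intersection condition of Definition \ref{general}, vanishing of $\pi_i$ for $1 \le i \le d-2$, embeddability in $\mathbb{R}^d$, and absence of pendant simplexoids. Throughout I will work inside $S^d$ and treat $G_1, G_2$ as subcomplexes meeting precisely along the embedded $K_d \cong S^{d-2}$ (which is the $(d-2)$-skeleton of the simplex on $\{x_1,\ldots,x_d\}$).

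For the combinatorial intersection condition, pairs of top $(d-1)$-simplexoids both lying in $G_1$ or both in $G_2$ are handled by the hypothesis on $G_k$ alone. The remaining case is $T_i \in G_1$ and $T_j \in G_2$, where $T_i \cap T_j \subseteq G_1 \cap G_2 = K_d$. The intersection condition inside $G_1$ (resp.\ $G_2$) applied to $T_i$ (resp.\ $T_j$) paired with each $(d-2)$-simplexoid of $K_d$ forces the meeting with $K_d$ to be along faces, and assembling these local intersections along the simplicial structure of $K_d$ forces $T_i \cap T_j$ to be a common sub-simplexoid of $T_i$ and $T_j$.

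For the homotopy groups, I apply Seifert--van Kampen with the cover $\{G_1, G_2\}$: since $\pi_1(G_1)=\pi_1(G_2)=0$, the pushout formula gives $\pi_1(G_1 \cup G_2) = \pi_1(G_1)\ast_{\pi_1(K_d)}\pi_1(G_2) = 0$, regardless of whether $\pi_1(K_d)$ is trivial ($d \ge 4$) or $\mathbb{Z}$ ($d = 3$). For higher degrees, Hurewicz gives $\widetilde H_j(G_k) = 0$ for $1 \le j \le d-2$, and one has $\widetilde H_j(K_d) = 0$ for $j \le d-3$ with $H_{d-2}(K_d) \cong \mathbb{Z}$. The Mayer--Vietoris sequence then forces $\widetilde H_j(G_1 \cup G_2) = 0$ for all $1 \le j \le d-2$, the only nontrivial input being the copy of $\mathbb{Z}$ in $H_{d-2}(K_d)$, which is absorbed because the next Mayer--Vietoris term $H_{d-2}(G_1)\oplus H_{d-2}(G_2)$ vanishes and $H_{d-3}(K_d) = 0$. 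Iterating Hurewicz on the simply-connected complex $G_1 \cup G_2$ then yields $\pi_j(G_1 \cup G_2) = 0$ in the same range.

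The main obstacle is the embedding step. The plan is to embed $G_1$ in $S^d$; by Lemma \ref{structure}, $S^d \setminus G_1$ is a disjoint union of open $d$-balls, and the embedded $K_d \cong S^{d-2}$ lies in the boundary of at least one such region $R$. By the generalized Schoenflies theorem (Lemma \ref{Schoenflies}), $\overline{R} \cong D^d$. I would then re-embed $G_2$ into $\overline{R}$ with its copy of $K_d$ identified with $K_d \subset \partial R$; feasibility follows from Lemma \ref{structure} applied to $G_2$, since $K_d$ bounds a complementary open $d$-ball region in the $S^d$-embedding of $G_2$, so the remainder of $G_2$ already sits inside a closed $d$-ball that maps homeomorphically into $\overline{R}$. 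The union yields an embedding of $G_1 \cup G_2$ into $S^d$, hence (by deleting a point in an interior region of the complement) into $\mathbb{R}^d$. Finally, any top simplexoid $T \in G_k$ remains non-pendant in $G_1 \cup G_2$, since each of its $(d-2)$-faces is already shared with another $(d-1)$-simplexoid of $G_k \subseteq G_1 \cup G_2$ by the closedness of $G_k$.
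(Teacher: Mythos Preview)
Your approach differs genuinely from the paper's. The paper argues embedding first and homotopy second, and couples them: it places $V(K_d)$ on a hyperplane $H$, asserts (without justification) that $G_1$ and $G_2$ can be embedded into the two closed half-spaces meeting along $H$, and then kills $\pi_i$ by an ad~hoc argument---given a putative essential $S^i$ in $G_1\cup G_2$, it homotopes the $G_1$-portion into $H$ and observes the resulting sphere lies in $G_2$, hence contracts. Your route decouples the two issues: you obtain $\pi_1=0$ by Seifert--van~Kampen and $\pi_i=0$ for $2\le i\le d-2$ by Mayer--Vietoris plus iterated Hurewicz, entirely independent of any embedding; you then treat the embedding separately via Lemma~\ref{structure} and Schoenflies. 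Your homotopy argument is cleaner and more robust than the paper's sphere-pushing, and you also check the simplexoid-intersection and non-pendant conditions, which the paper passes over.

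That said, your embedding step shares the paper's essential gap, only phrased differently. You assert that $K_d\cong S^{d-2}$ lies in $\partial R$ for a \emph{single} complementary region $R$ of $G_1$ in $S^d$; the paper asserts that $G_1$ can be embedded in a half-space with $K_d$ on the bounding hyperplane. Neither claim is justified from the bare hypothesis $G_1\cap G_2\cong K_d$. Concretely, the $(d-2)$-faces of $K_d$ may sit in the $(d-2)$-skeleton of $G_1$ with many $(d-1)$-simplexoids around each of them, and nothing in the hypotheses forces a common region to see all of $K_d$. In the intended application (Lemma~\ref{10.34}) this is harmless, since each $G_k$ actually contains the marker $(d-1)$-simplexoid $e$ with $\partial e=K_d$, and any region of $S^d\setminus G_k$ bounded by $e$ automatically has $K_d\subset\partial R$; you should either add that hypothesis or invoke it explicitly. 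A second, smaller point: once you have $K_d\subset\partial R$ and $K_d\subset\partial(S^d\setminus R')$, you still need a homeomorphism $\overline{S^d\setminus R'}\to\overline R$ matching the two copies of $K_d$ \emph{as simplicial complexes} (i.e.\ respecting the labelling $x_1,\dots,x_d$), not merely as topological $(d-2)$-spheres; Schoenflies gives you an ambient isotopy of the underlying spheres, and you should say a word about why the simplicial structure can be carried along.
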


\begin{proof}
	Let $H$ be a hyperplane, $V(K_d)\subseteq H$. At this point, the hyperplane $H$ divides $\mathbb{R}^d$ into two disconnected regions, denoted by $R_1$ and $R_2$, respectively. We embed $G_1$ into $R_1$ and $G_2$ into $R_2$ in such a way that $G_1$ and $G_2$ intersect only at $V(K_d)$. 
	
	By contradiction, suppose the $i$-th homotopy group of $G_1 \cup G_2$ is nontrivial for some $i \in \{1, 2, \ldots, d-2\}$, then there must exist an $i$-sphere $S^i$ that cannot be continuously contracted to the base point. If $S^i$ belongs to either $G_1$ or $G_2$, then it can be continuously contracted to the base point, which leads to a contradiction. Therefore, $S^i$ must intersect both $G_1$ and $G_2$. Let $S^i\cap G_1= L_1$ and $S^i\cap G_2= L_2$, respectively. We first transform $L_1$ into $L_3$ by homotopy, such that $L_3$ belongs to $H$. It is easy to verify that $L_2$ and $L_3$ belong to $G_2$, thus they can be continuously contracted to the base point. By combining the two homotopy transformations, we obtain that $S^i$ can be continuously contracted to the base point, a contradiction. In conclusion, the assumption is invalid, and the theorem is proven.
	
\end{proof}

\begin{lemma}\label{10.34}
	Let $G$ be an $\mathbb{R}^d$-hypergraph with a $d$-vertex cut $\{x_1,x_2, ..., x_d\}$, then $G$ is a closed $\mathbb{R}^d$-hypergraph if and only if each of its marked $\{x_1,x_2, ..., x_d\}$-components is a closed $\mathbb{R}^d$-hypergraph.
\end{lemma}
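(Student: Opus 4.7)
The proof plan is to establish the two directions separately, tracking how the cut $S=\{x_1,\ldots,x_d\}$ and the marker simplexoid $e$ on $S$ interact with the non-pendant condition.

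For the forward direction, suppose $G$ is closed and let $H=G[S\cup X_i]\cup\{e\}$ be a marked $S$-component. By Lemma \ref{10.33}, $H$ is isomorphic to a minor of $G$, so it inherits both an $\mathbb{R}^d$-embedding and triviality of $\pi_k$ for $k\le d-2$ (the relevant contractions preserve these, as noted before Lemma \ref{special}); hence $H$ is an $\mathbb{R}^d$-hypergraph. To rule out pendant simplexoids, I would treat the two classes of simplexoids in $H$ separately. For a non-marker simplexoid $T\in G[S\cup X_i]$ and any $(d-2)$-face $J$ of $T$, closedness of $G$ yields some $T'\ne T$ in $G$ with $T\cap T'=J$; if $T'\in G[S\cup X_i]$ then $J$ is already shared inside $H$, and otherwise $T'\in G[S\cup X_j]$ for some $j\ne i$, which forces $V(J)\subseteq V(T)\cap V(T')\subseteq(S\cup X_i)\cap(S\cup X_j)=S$, making $J$ a face of the marker $e$ and hence shared with $e$ inside $H$. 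For the marker $e$ itself, I would argue that each face $J_i=S\setminus\{x_i\}$ is also a face of some original simplexoid of $G$ contained in $G[S\cup X_i]$, by combining the $d$-connectivity of the $1$-skeleton (Lemma \ref{sphere3}) with the local geometric structure forced by the closed embedding (Lemmas \ref{structure} and \ref{E1}): the cut simplex must be filled in by simplexoids from each side in order for the region boundaries of $\mathbb{R}^d\setminus G$ to remain $(d-1)$-spheres.

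For the backward direction, suppose all marked $S$-components $H_1,\ldots,H_k$ are closed $\mathbb{R}^d$-hypergraphs; since $S$ is a genuine cut one has $k\ge 2$. I would first show that $G$ itself is an $\mathbb{R}^d$-hypergraph by choosing an $\mathbb{R}^d$-embedding of each $H_i$ in which the marker occupies a common $(d-1)$-simplex inside a shared hyperplane, gluing the embeddings along $e$ in the style of Lemma \ref{Exercise 10.4.1}, and finally removing $e$ if it was not already a simplexoid of $G$. The non-pendant condition for $G$ then follows from the closedness of the $H_i$: given $T\in G$ lying in some $H_i$ and a $(d-2)$-face $J$ of $T$, closedness of $H_i$ produces $T'\in H_i$ with $T\cap T'=J$; if $T'\ne e$ then $T'\in G$ and we are done, while if $T'=e$ then $V(J)\subseteq S$, whereupon the non-pendancy of the marker in any other component $H_j$ ($j\ne i$) supplies a non-marker simplexoid $T''\in G[S\cup X_j]\subseteq G$ that shares $J$ with $T$.

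I expect the hardest part to be the forward direction's claim that each face $J_i$ of $e$ is actually a face of some original simplexoid of $G$ inside $G[S\cup X_i]$: unlike the non-marker case, the closedness of $G$ alone does not immediately supply such a partner. I would tackle it contrapositively, showing that if $J_i$ admitted no filling simplex on the $X_i$ side, the $(d-1)$-sphere bounding the region of $\mathbb{R}^d\setminus G$ that abuts $J_i$ from that side would fail to close up correctly, contradicting Lemma \ref{structure}, or equivalently that the $d$-connectivity of the $1$-skeleton supplied by Lemma \ref{sphere3} would be violated. A secondary difficulty, in the backward direction, is ensuring that removing $e$ from the glued hypergraph preserves triviality of $\pi_k$ for $k\le d-2$; this should follow from the homotopy extension property, since the attachment of $e$ is along its boundary $K_d$, a $(d-2)$-sphere whose inclusion into the glued complex is a cofibration.
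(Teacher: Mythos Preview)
Your plan follows the same skeleton as the paper: the forward direction via Lemma~\ref{10.33} (each marked component is a minor of $G$), and the backward direction via Lemma~\ref{Exercise 10.4.1} together with an induction on the number of components, obtaining first that $G+e$ is a closed $\mathbb{R}^d$-hypergraph and then that $G$ is. The paper compresses the forward direction to the single clause ``minor of $G$, hence closed $\mathbb{R}^d$-hypergraph,'' without the case analysis you perform to verify non-pendancy of marker versus non-marker simplexoids; your identification of the marker faces $J_i$ as the genuinely delicate point is correct, and your proposed fix via Lemmas~\ref{structure} and~\ref{E1} (or via $d$-connectivity from Lemma~\ref{sphere3}) is along the right lines, though the paper leaves this implicit. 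One place where you are doing more work than necessary: your ``secondary difficulty'' in the backward direction, checking $\pi_k(G)=0$ for $k\le d-2$ after deleting $e$, is not needed, since the lemma's hypothesis already assumes $G$ is an $\mathbb{R}^d$-hypergraph; all that remains to verify for $G$ is the non-pendant condition, which your partner-switching argument (replace $e$ by a non-marker simplexoid from some other $H_j$) already handles.
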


\begin{proof}
	Suppose, first, that $G$ is a closed $\mathbb{R}^d$-hypergraph. By Lemma \ref{10.33}, each marked $\{x_1,x_2, ..., x_d\}$-component of $G$ is isomorphic to a minor of $G$, hence is closed $\mathbb{R}^d$-hypergraph. 
	
	Conversely, suppose that $G$ has $k$ marked $\{x_1,x_2, ..., x_d\}$-components each of which is a closed $\mathbb{R}^d$-hypergraph. Let $e$ denote their common marker simplex. Applying Lemma \ref{Exercise 10.4.1} and induction on $k$, it follows that $G + e$ is a closed $\mathbb{R}^d$-hypergraph, hence so is $G$. 
\end{proof}

By Lemma \ref{10.34}, we know that to prove a closed $\mathbb{R}^d$-hypergraph can be embedded in $\mathbb{R}^d$, it is sufficient to show that all of its marked $\{x_1,x_2, ..., x_d\}$-components can be embedded in $\mathbb{R}^d$.

\subsection{Connectivity}

Before proving Theorem \ref{anti-minor}, we need a lemma regarding connectivity. 
\begin{lemma}
	\label{connected}
	Let $G$ be a $(d+1)$-connected graph on at least $(d+2)$ vertices, then $G$ contains an edge $e$ such that $G/e$ is $(d+1)$-connected. 
\end{lemma}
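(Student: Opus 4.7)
The plan is to argue by contradiction, exploiting an extremal choice of minimum separator together with the submodularity of vertex cuts. Assume that for every edge $e = uv$ of $G$, the contracted graph $G/e$ fails to be $(d+1)$-connected, so $G/e$ admits a vertex cut $\bar S$ of size at most $d$. Lifting $\bar S$ to $G$---by replacing the merged vertex by the pair $\{u, v\}$ whenever it belongs to $\bar S$---yields a vertex cut of $G$ of size at most $d + 1$ containing both $u$ and $v$. Since $G$ is $(d+1)$-connected, this lifted cut has size exactly $d + 1$; hence every edge of $G$ sits, as a pair of endpoints, inside some $(d+1)$-cut of $G$.

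Next I would choose extremally: among all pairs $(T, H)$ in which $T$ is a $(d+1)$-cut of $G$ and $H$ is a component of $G - T$, pick one that minimizes $|V(H)|$. Because $T$ is a minimum cut of a $(d+1)$-connected graph, every vertex of $T$ has at least one neighbor in $V(H)$; in particular there exists an edge $uv$ with $u \in T$ and $v \in V(H)$. By the contradiction hypothesis, $\{u, v\}$ lies in some $(d+1)$-cut $T'$ of $G$, with $u \in T \cap T'$ and $v \in V(H) \cap T'$.

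The core step is to combine $T$ and $T'$ into a new $(d+1)$-cut whose small side is a proper subset of $V(H)$, contradicting the extremal choice. Writing $V(G) = T \sqcup V(H) \sqcup V(R)$ and $V(G) = T' \sqcup V(H') \sqcup V(R')$, where $R$ and $R'$ collect the remaining components of $G - T$ and $G - T'$, one examines the mixed cut $T'' := (T \cap T') \cup \bigl((T \setminus T') \cap V(H')\bigr) \cup \bigl((T' \setminus T) \cap V(H)\bigr)$ and applies the standard submodular inequality $|N(A)| + |N(B)| \geq |N(A \cap B)| + |N(A \cup B)|$ for external vertex boundaries to verify $|T''| \leq d + 1$. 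The component of $G - T''$ contained in $V(H) \cap V(H')$ then turns out to be strictly smaller than $V(H)$, delivering the contradiction.

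The main obstacle is the degenerate case $|V(H)| = 1$, in which $H = \{v\}$ and the neighborhood of $v$ equals $T$; here there is no room for a strictly smaller component inside $V(H)$ itself. In this case one would invoke $|V(G)| \geq d + 2$ to guarantee that the other side $V(R)$ of the cut $T$ is nonempty, and then apply the same construction symmetrically to a smallest component on the $V(R)$-side of some $(d+1)$-cut through a neighbor of $v$, ultimately producing a separator of size at most $d$ and directly contradicting the $(d+1)$-connectivity of $G$.
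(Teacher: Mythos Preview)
Your approach differs from the paper's. The paper follows the Thomassen-style argument: it chooses an edge $e=xy$ and a $(d+1)$-cut $\{x,y,z_1,\dots,z_{d-1}\}$ so that $G-\{x,y,z_1,\dots,z_{d-1}\}$ has a component $F$ that is as \emph{large} as possible, then asserts that $H=G[V(F)\cup\{x,y,z_2,\dots,z_{d-1}\}]$ is $d$-connected, picks a neighbour $u$ of $z_1$ in a component other than $F$, and shows that the $(d+1)$-cut through the edge $z_1u$ would leave a component containing all of $H$ minus at most $d-1$ vertices, hence strictly larger than $F$. You instead \emph{minimize} the small side and attempt to uncross two $(d+1)$-cuts via submodularity of vertex boundaries; neither argument uses the other's key device.

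Your handling of the degenerate case is a genuine gap, and the obstruction is broader than you state. The uncrossing step requires one of the corners $V(H)\cap V(H')$ or $V(H)\cap V(R')$ to be nonempty; this fails whenever $V(H)\subseteq T'$, not only when $|V(H)|=1$. In that situation your cut $T''$ separates nothing inside $V(H)$, so no component strictly smaller than $H$ is produced. The proposed fix---``apply the same construction symmetrically on the $V(R)$-side and obtain a separator of size at most $d$''---does not work: since $(T,H)$ already minimizes $|V(H)|$ over \emph{all} $(d+1)$-cuts and components, every component of $G-T'$ on either side has size at least $|V(H)|$, and nothing in your outline forces a cut of size $\le d$ to appear. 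The paper's proof sidesteps this entirely by working on the \emph{large} side and invoking the $d$-connectivity of the $S$-component $H$; that structural ingredient is what your argument is missing.
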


\begin{proof}
	Suppose the theorem is false. Then, for any edge $e = xy$ of $G$, the contraction $G/e$ is not $(d+1)$-connected. Let $w$ be the vertex resulting from the contraction of $e$. By Lemma \ref{not-c}, there exists vertex set $\{z_1, z_2, ..., z_{d-1}, w\}$ such that $\{z_1, z_2, ..., z_{d-1}, w\}$ is a $d$-vertex cut of $G$. 
	
	Choose the edge $e$ and the vertex set $\{z_1, z_2, ..., z_{d-1}, w\}$ in such a way that $G - \{x,y,z_1, z_2, ..., z_{d-1}\}$ has a component $F$ with as many vertices as possible. Consider the graph $G - \{z_1\}$. Because $G$ is $(d+1)$-connected, $G- \{z_1\}$ is $d$-connected. Moreover $G-\{z_1\}$ has the $d$-vertex cut $\{x,y, z_2, ..., z_{d-1}\}$. It follows that the $\{x,y, z_2, ..., z_{d-1}\}$-component $H = G[V (F) \cup \{x,y, z_2, ..., z_{d-1}\}]$ is $d$-connected. 
	
	Let $u$ be a neighbour of $z_1$ in a component of $G - \{x,y,z_1, z_2, ..., z_{d-1}\}$ different from $F$. Since $f = z_1u$ is an edge of $G$, and $G$ is a counterexample to Lemma \ref{connected}, there is a vertex set $\{v_1, v_2, ..., v_{d-1}\}$ such that $\{z_1,u, v_1, v_2, ..., v_{d-1}\}$ is a $(d+1)$-vertex cut of $G$, too. (The vertices $\{v_1, v_2, ..., v_{d-1}\}$ might or might not lie in $H$.) Moreover, because H is $d$-connected, $H-\{v_1, v_2, ..., v_{d-1}\}$ is connected (where, if there exists $v_i\in \{v_1, v_2, ..., v_{d-1}\}$ such that $v_i \in V (H)$, we set $H- v_i = H$), and thus is contained in a component of $G - \{z,u, v_1, v_2, ..., v_{d-1}\}$. But this component has more vertices than $F$ (because $H$ has $d$ more vertices than $F$), contradicting the choice of the edge $e$ and the vertex $v$.
	
\end{proof}

\begin{lemma}\label{not-c}
	Let $G$ be a $(d+1)$-connected graph on at least $(d+2)$ vertices, and let $e = xy$ be an edge of $G$ such that $G/e$ is not $(d+1)$-connected. Then there exist some vertices such that $\{x,y, z_1, z_2, ..., z_{d-1}\}$ is a $(d+1)$-vertex cut of $G$. 
\end{lemma}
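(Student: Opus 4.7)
The plan is to pull back a small vertex cut of $G/e$ to a vertex cut of $G$ containing both endpoints of $e$. Let $w$ denote the vertex of $G/e$ obtained by identifying $x$ and $y$. Since $G/e$ is not $(d+1)$-connected and $|V(G/e)| = |V(G)|-1 \geq d+1$, there must exist a set $S' \subseteq V(G/e)$ with $|S'| \leq d$ whose removal disconnects $G/e$ (or, in the degenerate case $|V(G)|=d+2$, leaves a graph that fails the connectivity definition, which I address at the end).

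First I would argue that $w$ must lie in $S'$. Indeed, if $w \notin S'$, then $V(G) \setminus S'$ and $V(G/e) \setminus S'$ correspond in the obvious way: the subgraph $G - S'$ is obtained from $G/e - S'$ by splitting the vertex $w$ into the adjacent pair $\{x,y\}$, which can only \emph{merge} components, not create new ones. Hence the disconnection of $G/e - S'$ would transfer to a disconnection of $G - S'$, giving a vertex cut of $G$ of size at most $d$ and contradicting the hypothesis that $G$ is $(d+1)$-connected.

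Knowing $w \in S'$, I would set $S := (S' \setminus \{w\}) \cup \{x,y\}$, so that $|S| \leq d+1$. Writing $\{z_1,\ldots,z_{d-1}\} = S' \setminus \{w\}$ (padding or trimming to hit the right size if $|S'| < d$ by noting that any superset of a cut of an incomplete part of $G$ is still a cut), the subgraph $G - S$ equals $G/e - S'$ as a graph (since removing $\{x,y\}$ from $G$ produces exactly the same vertex set and edge set as removing $w$ from $G/e$). Therefore $G - S$ is disconnected, so $S$ is a vertex cut of $G$. Combined with the $(d+1)$-connectivity of $G$, which forces $|S| \geq d+1$, we obtain $|S| = d+1$, yielding the desired cut $\{x,y,z_1,\ldots,z_{d-1}\}$.

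The main obstacle, and the only place where any genuine care is needed, is the degenerate case $|V(G)|=d+2$: then $|V(G/e)| = d+1$, and depending on the convention for "$(d+1)$-connected" on such a small graph, the hypothesis that $G/e$ is not $(d+1)$-connected may be automatic rather than producing an explicit small cut. In this situation $G$ must be $K_{d+2}$ (the only $(d+1)$-connected graph on $d+2$ vertices), so $G/e = K_{d+1}$ admits no proper vertex cut at all, and in fact the conclusion of the lemma cannot literally hold; thus the lemma is implicitly applied under $|V(G)| \geq d+3$, matching exactly how it is invoked inside the proof of Lemma \ref{connected}. Outside this boundary case, the argument above proceeds without any further subtlety.
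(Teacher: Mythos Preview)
Your argument is correct and matches the paper's approach exactly: show that any small cut $S'$ of $G/e$ must contain the contracted vertex $w$ (otherwise it would already be a $d$-cut of $G$, contradicting $(d+1)$-connectivity), then replace $w$ by $\{x,y\}$ to obtain the desired $(d+1)$-cut. One small caveat: the phrase ``can only merge components, not create new ones'' points in the wrong direction for what you need; the clean justification is that $(G-S')/e = G/e - S'$ and contracting an edge preserves the number of connected components, so $G-S'$ is disconnected if and only if $G/e - S'$ is.
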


\begin{proof}
	Let $\{z_1, z_2, ..., z_{d-1}, w\}$ be a $d$-vertex cut of $G/e$. At least $d-1$ of these $d$ vertices, say $\{z_1, z_2, ..., z_{d-1}\}$, is not the vertex resulting from the contraction of $e$. Set $F = G- \{z_1, z_2, ..., z_{d-1}\}$. Because $G$ is $(d+1)$-connected, $F$ is certainly $2$-connected. However $F/e = (G - \{z_1, z_2, ..., z_{d-1}\}) /e = (G/e) - \{z_1, z_2, ..., z_{d-1}\}$ has a cut vertex, namely $w$. 
	
	If $w$ is not the vertex resulting from the contraction of $e$, then $\{z_1, z_2, ..., z_{d-1}, w\}$ must be a $d$-vertex cut of $G$, a contradiction. Hence $w$ must be the vertex resulting from the contraction of $e$. Therefore $G - \{x,y,z_1, z_2, ..., z_{d-1}\} = (G/e) - \{z_1, z_2, ..., z_{d-1},w\}$ is disconnected, in other words, $\{x,y,z_1, z_2, ..., z_{d-1}\}$ is a $(d+1)$-vertex cut of $G$.
\end{proof}

\subsection{Anti-$d$-dimension minor}\label{section9}

The following proof demonstrates that there is a conclusion that holds in $\mathbb{R}^d$ that similar to Wagner's theorem. 
It is necessary to generalize the concepts of complete graphs and complete bipartite graphs to higher-dimensional spaces.

\begin{definition}[{\it complete $i$-uniform-topological hypergraph $K_n^i$}]\label{complete}
	Let $G$ be an $i$-uniform-topological hypergraph, $V$ be the vertex set of $G$ with order $n$, $\mathscr{V}(i)$ be the collection of all subsets of $V$ containing $i$ elements. 
	If for any $V_j\in \mathscr{V}(i)$, there exists a simplices $T$ in $G$ such that $V(T)= V_j$, then we call $G$ a {\it complete $i$-uniform-topological hypergraph}, which is denoted by $K_n^i$. (Figure \ref{new5} is an example of $K_4^3$. )
	
\end{definition}

\begin{figure}[ht]
	\centering
	\begin{minipage}{0.45\linewidth}
		\centering
		\includegraphics[width=0.4\linewidth]{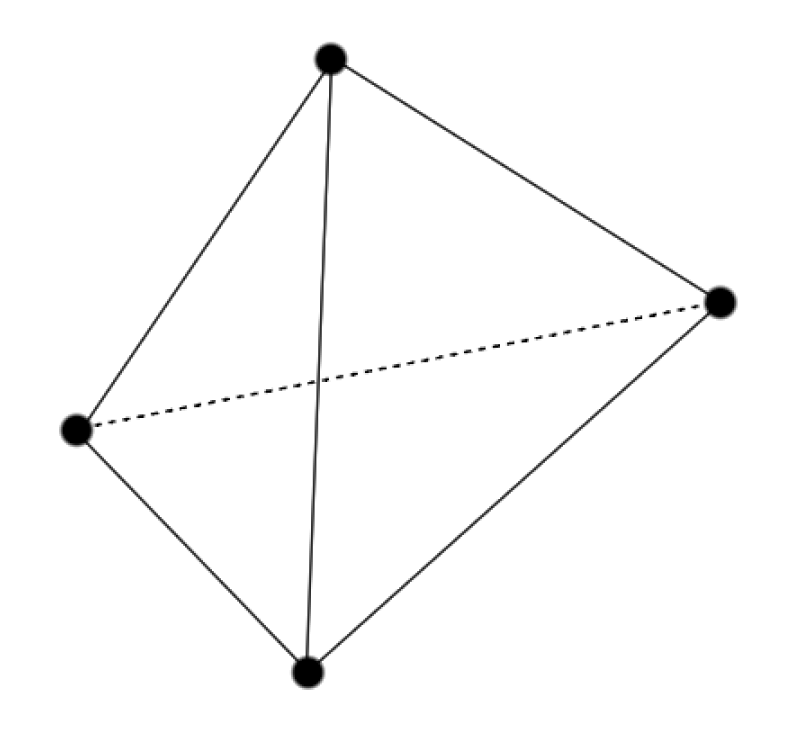}
		\caption{$K_4^3$.}
		\label{new5}
	\end{minipage}
	\hfill
	\begin{minipage}{0.45\linewidth}
		\centering
		\includegraphics[width=0.4\linewidth]{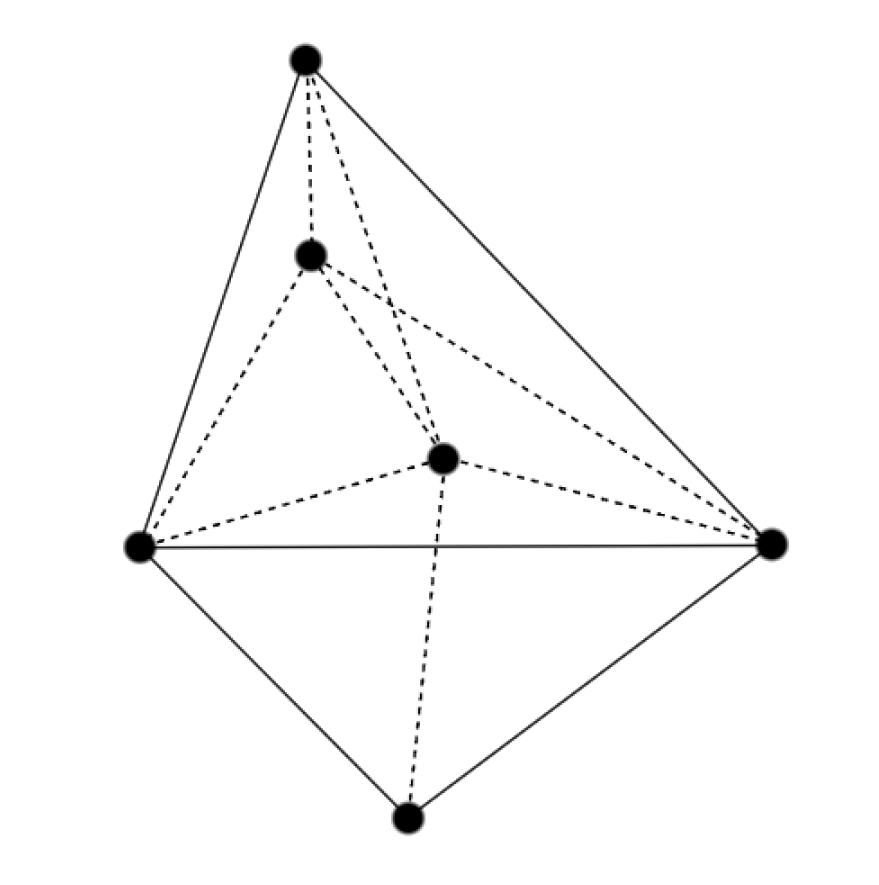}
		\caption{$K_{2,4}^3$.}
		\label{new6}
	\end{minipage}
\end{figure}



\begin{definition}[{\it complete bipartite $i$-uniform-topological hypergraph $K_{p,q}^i$}]\label{bipartite}
	Let $G$ be an $i$-uniform-topological hypergraph, $V$ be the vertex set of $G$,  $V(A:B)$ be a partition of $V$ in which $A=\{a_1, a_2, ..., a_p\}$ and $B=\{b_1, b_2, ..., b_q\}$. If $G$ satisfies the following properties, we call $G$ a {\it complete bipartite $i$-uniform-topological hypergraph}, which is denoted by $K_{p,q}^i$. (An example of $K_{2,4}^3$ is shown in Figure~\ref{new6}.)
	
	\begin{itemize}
		\item $G[B]$ is a complete $(i-1)$-uniform-topological hypergraph $K_q^{i-1}$. 
		
		\item For any $T_j\in A_{i-2}(G[B])$ and $a_k\in A$, there exists a simplex $T$ in $G$ such that $V(T)= V(T_j)\cup \{a_k\}$. 
	\end{itemize}
	
\end{definition}

Next, we use the Jordan-Brouwer Separation Theorem to prove Lemma \ref{comp} and \ref{bipa}. 

\begin{lemma}[Jordan-Brouwer Separation Theorem \cite{hatcher2002algebraic}]\label{jordan-Brouwer}
	Let $X$ be a $d$-dimensional topological sphere in the $(d+1)$-dimensional Euclidean space $\mathbb{R}^{d+1}$ ($d > 0$), i.e. the image of an injective continuous mapping of the $d$-sphere $S^d$ into $\mathbb{R}^{d+1}$, then the complement $Y$ of $X$ in $\mathbb{R}^{d+1}$ consists of exactly two connected components. One of these components is bounded (the interior) and the other is unbounded (the exterior). The set $X$ is their common boundary.
\end{lemma}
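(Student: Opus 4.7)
The plan is to reduce everything to a question about the reduced homology of the complement of a sphere inside a sphere, and then invoke Alexander duality. First I would one-point compactify so as to work inside $S^{d+1} = \mathbb{R}^{d+1}\cup\{\infty\}$ rather than $\mathbb{R}^{d+1}$; since $\infty\notin X$ (as $X$ is the continuous image of the compact space $S^d$, hence bounded in $\mathbb{R}^{d+1}$), we have $\mathbb{R}^{d+1}\setminus X = (S^{d+1}\setminus X)\setminus\{\infty\}$. The number of connected components and the boundary structure transfer between the two settings provided we control what happens at $\infty$.

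Next I would apply Alexander duality, which asserts that for a compact, locally contractible (or at least nice enough) subspace $X\subseteq S^{d+1}$ one has
\[
\tilde{H}_i(S^{d+1}\setminus X;\mathbb{Z}) \;\cong\; \check{H}^{d-i}(X;\mathbb{Z}).
\]
Taking $i=0$ and $X\cong S^d$ gives $\tilde{H}_0(S^{d+1}\setminus X)\cong \check{H}^d(S^d)\cong \mathbb{Z}$, so $S^{d+1}\setminus X$ has exactly two path-components; all higher reduced homology vanishes for $i\geq 1$. To handle the case where $X$ is merely a topological embedding (no smoothness or PL assumption), I would either cite the Čech version of Alexander duality (which needs only that $X$ be compact) or, alternatively, carry out an induction on $d$ using a Mayer-Vietoris sequence applied to the cover of $S^{d+1}\setminus X$ by the complements of the two closed hemispheres of $X$; the base case $d=0$ is immediate and the inductive step uses that the intersection of the two hemispheres is $S^{d-1}$.

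Having obtained two components $U_1$ and $U_2$ of $S^{d+1}\setminus X$, exactly one of them contains $\infty$. That component minus $\infty$ remains connected in $\mathbb{R}^{d+1}$: a punctured neighbourhood of $\infty$ in $S^{d+1}$ is homeomorphic to the complement of a large closed ball in $\mathbb{R}^{d+1}$, which is path-connected for $d\geq 0$, so every point of $U_i\setminus\{\infty\}$ can be joined to points arbitrarily far out, hence to each other. This component is by construction unbounded, while the other component lies inside some large ball and is bounded. Finally, for the common-boundary statement, I would argue that $X$ has empty interior (so $X\subseteq \partial U_1\cap\partial U_2$) and, conversely, that every $x\in X$ is a limit point of each $U_j$: push a small arc transverse to $X$ near $x$ and note that the two ends must lie in different components, for otherwise a small modification would yield a continuous contraction contradicting the counting of components just established.

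The main obstacle, and the step I expect to require the most care, is the reverse inclusion $X\subseteq \partial U_1\cap\partial U_2$, that is, the assertion that \emph{both} sides of $X$ cluster at every point of $X$. The counting from Alexander duality only tells us that the complement falls into two pieces; it does not automatically tell us that $X$ separates them locally at every single point. The clean way to close this gap is to invoke invariance of domain together with a local version of the separation argument applied to a collared neighborhood of $x$ in $X$; in the locally flat setting assumed by the paper (so that Lemma \ref{Schoenflies} applies), such a collar exists and the local picture is the standard $\mathbb{R}^d\subset\mathbb{R}^{d+1}$, which makes the two-sidedness transparent. Thus the locally flat hypothesis, already present in the paper's use of the generalized Schoenflies theorem, is exactly what lets the boundary statement go through without pathology.
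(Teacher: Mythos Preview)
The paper does not prove this lemma at all: it is stated with a citation to Hatcher and used as a black box, so there is no ``paper's own proof'' to compare against. Your proposal is the standard textbook argument (essentially the one in Hatcher, \S 2.B), and it is correct.

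One small remark on the step you flagged as delicate: the common-boundary assertion $X=\partial U_1=\partial U_2$ does \emph{not} actually require local flatness or a collar. If some $x\in X$ failed to lie in $\overline{U_1}$, say, then a small open ball around $x$ would meet only $U_2\cup X$, and one checks that $X\setminus\{x\}$ would still separate $S^{d+1}$. But $X\setminus\{x\}\cong\mathbb{R}^d$, and the same Alexander-duality (or inductive Mayer--Vietoris) computation you already invoked gives $\tilde H_0\bigl(S^{d+1}\setminus\mathbb{R}^d\bigr)=0$, a contradiction. So the boundary clause comes for free from the homological machinery, and you need not appeal to the locally flat hypothesis here.
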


\begin{lemma}\label{comp}
	$K_{d+3}^d$ is a non-$\mathbb{R}^d$-hypergraph. 
\end{lemma}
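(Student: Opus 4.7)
The plan is to extend the Euler-formula proof of $K_5$'s non-planarity to $d$ dimensions. Suppose, for contradiction, that $K_{d+3}^d$ is an $\mathbb{R}^d$-hypergraph and thus embeds into the one-point compactification $S^d$ of $\mathbb{R}^d$. First I would verify that $K_{d+3}^d$ is a \emph{closed} $\mathbb{R}^d$-hypergraph so that Lemma \ref{structure} applies. Since $K_{d+3}^d$ is the $(d-1)$-skeleton of the contractible simplex $\Delta^{d+2}$, cellular approximation yields $\pi_i(K_{d+3}^d)=0$ for $i\leq d-2$; moreover, every $(d-1)$-subset of the $d+3$ vertices extends to a $d$-subset in three ways, so each $(d-2)$-sub-simplexoid is shared with at least one other top simplexoid, and no pendant simplexoid can occur.

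By Lemma \ref{structure}, every connected component of $S^d\setminus K_{d+3}^d$ is an open $d$-ball. Let $N$ be the number of such components. A direct cell count of $K_{d+3}^d$, which has $\binom{d+3}{i+1}$ cells in each dimension $i\leq d-1$, gives
\[
\chi(K_{d+3}^d)=\sum_{i=0}^{d-1}(-1)^i\binom{d+3}{i+1}=1-(-1)^d\binom{d+2}{2}.
\]
Attaching the $N$ open $d$-cells recovers $S^d$, so $\chi(S^d)=1+(-1)^d=\chi(K_{d+3}^d)+(-1)^d N$, whence $N=1+\binom{d+2}{2}$.

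The contradiction will then arise from double-counting $(d-1)$-simplexoid/face incidences. Each $(d-1)$-simplexoid lies on the boundary of exactly two $d$-balls, so the number of incidences equals
\[
2\binom{d+3}{d}=\frac{(d+3)(d+2)(d+1)}{3}.
\]
On the other hand, the boundary of each $d$-ball face is a $(d-1)$-sphere assembled from simplexoids, and any simplicial-like triangulation of $S^{d-1}$ requires at least $d+1$ top simplices (with equality realized by $\partial\Delta^d$). Therefore the total number of incidences is also at least $(d+1)N=(d+1)\bigl(1+\binom{d+2}{2}\bigr)$. Combining the two bounds and simplifying collapses to $d(d-1)\leq 0$, which fails for every $d\geq 2$, giving the desired contradiction.

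I anticipate that the principal technical obstacle lies in rigorously transferring the classical lower bound $d+1$ on the number of top simplices in a triangulation of $S^{d-1}$ from the strictly simplicial setting to the simplexoid setting used here: one must observe that the boundary of a face is a sub-complex of $K_{d+3}^d$ homeomorphic to $S^{d-1}$ whose cells are $(d-1)$-simplexoids glued along sub-simplexoids, so that the combinatorial lower-bound theorem still applies. All other steps follow directly from cellular approximation, Lemma \ref{structure}, and elementary arithmetic.
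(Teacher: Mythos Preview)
Your argument is correct and is genuinely different from the paper's. The paper proves Lemma~\ref{comp} by a direct separation argument: it embeds $K_{d+3}^d\setminus\{v_{d+3}\}$, observes that for each $i$ the induced subhypergraph on $V\setminus\{v_{d+3},v_i\}$ is a copy of $\partial\Delta^d\cong S^{d-1}$, and then invokes the Jordan--Brouwer Separation Theorem (Lemma~\ref{jordan-Brouwer}) to argue that wherever $v_{d+3}$ is placed, some simplexoid joining $v_{d+3}$ to the ``far'' vertex $v_i$ must pierce that $(d-1)$-sphere. No Euler-characteristic computation or face count enters at all. Your route, by contrast, is the high-dimensional analogue of the classical $V-E+F=2$ proof that $K_5$ is non-planar: you use Lemma~\ref{structure} (valid once $K_{d+3}^d$ is assumed to be a closed $\mathbb{R}^d$-hypergraph) to identify the complementary regions as $d$-balls, compute their number $N=1+\binom{d+2}{2}$ via $\chi(S^d)$, and then double-count $(d-1)$-simplexoid/face incidences against the lower bound $d+1$ for a simplicial $(d-1)$-sphere.

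What each approach buys: the paper's argument is shorter, uses only Lemma~\ref{jordan-Brouwer}, and does not require Lemma~\ref{structure} or Lemma~\ref{E1} as input. Your argument is more quantitative and fits the same template that also handles $K_{3,3}$ in the plane, so it may extend more uniformly to $K_{3,d+1}^d$ in Lemma~\ref{bipa}. The technical point you flag is real but routine in this paper's setting: by Lemma~\ref{sphere3} the $1$-skeleton of a closed $\mathbb{R}^d$-hypergraph is $d$-connected, so Lemma~\ref{E1} guarantees each face boundary is a genuine $(d-1)$-sphere; since $K_{d+3}^d$ is simple (Definition~\ref{simple}), that boundary is a simplicial $(d-1)$-sphere, and the elementary pseudomanifold argument (every ridge lies in exactly two facets, the $d$ neighbours of any facet are pairwise distinct) gives the bound $\geq d+1$. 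The same observation also justifies your claim that each $(d-1)$-simplexoid borders exactly two faces.
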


\begin{proof}
	Suppose to the contrary that $K_{d+3}^d$ is an $\mathbb{R}^d$-hypergraph. 
	
	Let $V(K_{d+3}^d)= \{v_1, v_2, ... v_{d+2}, v_{d+3}\}$. Let $\mathscr{V}_{d+2}^{d+1}$ be the collection of all subsets of $V(K_{d+3}^d)\backslash \{v_{d+3}\}$ containing $(d+1)$ elements. 
	
	Note that for any $V_i\in \mathscr{V}_{d+2}^{d+1}$, $K_{d+3}^d[V_i]$ is homeomorphic to $S^{d-1}$.

	\begin{figure}[ht]
		\centering
		\begin{minipage}{0.45\linewidth}
			\centering     
			\includegraphics[width=1\linewidth]{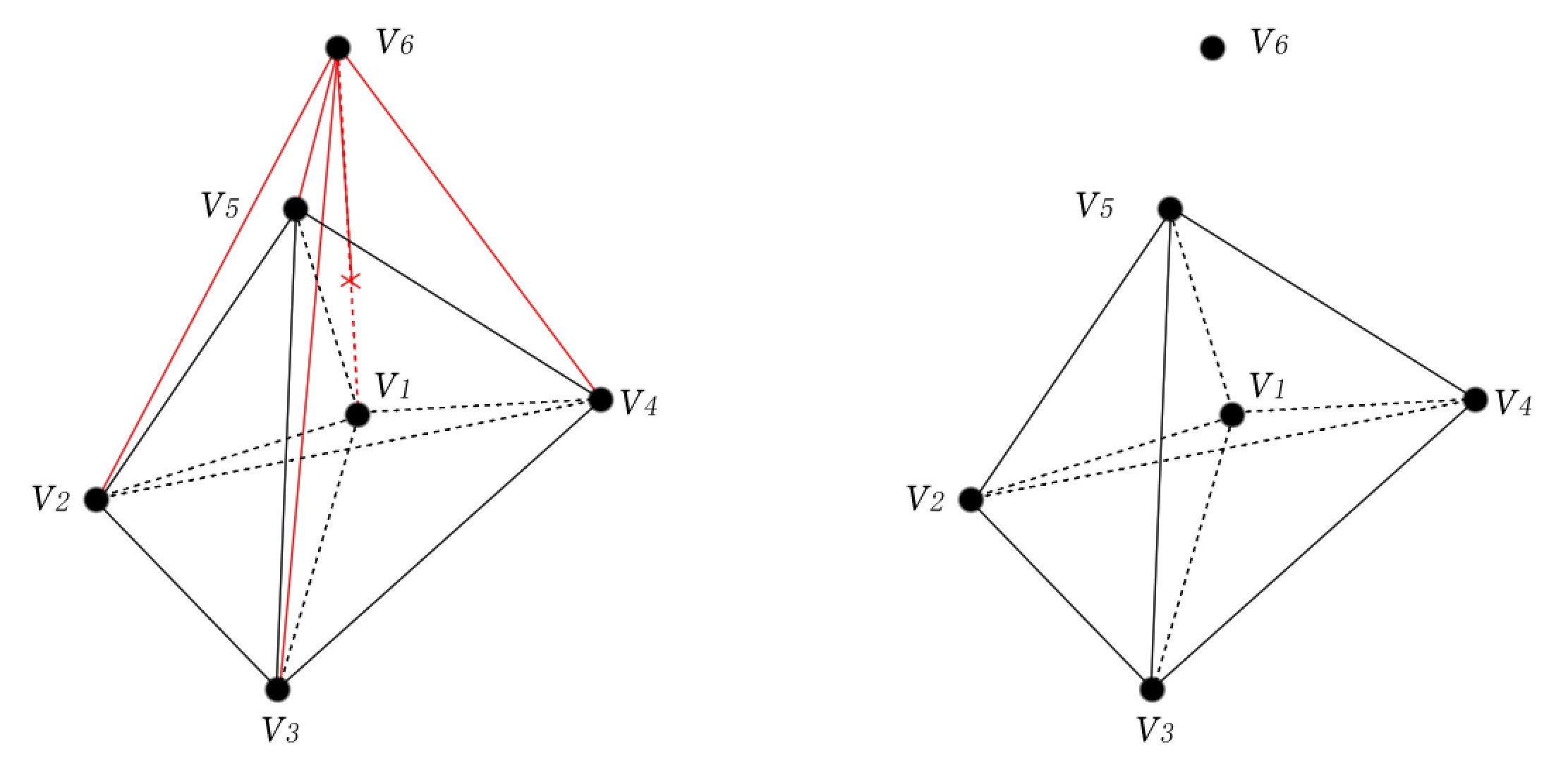}
			\caption{$K_{3,4}^3$ in $\mathbb{R}^d$.}
			\label{new7}
		\end{minipage}
		\hfill
		\begin{minipage}{0.45\linewidth}
			\centering    
			\includegraphics[width=1\linewidth]{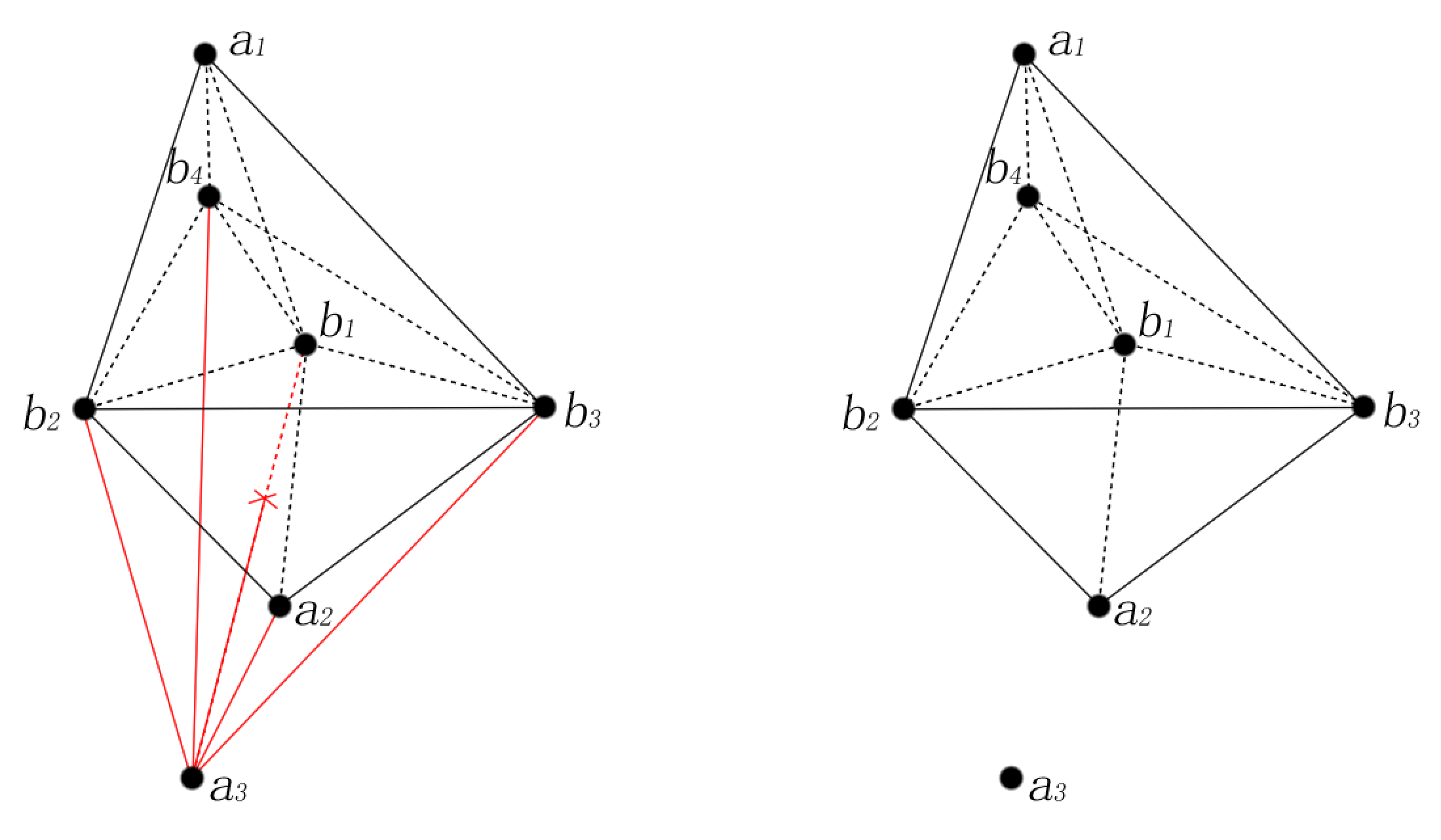}
			\caption{$K_{3,d+1}^d$ in $\mathbb{R}^d$.}
			\label{new8}
		\end{minipage}
	\end{figure}

	
	Let $V_i= V(K_{d+3}^d)\backslash \{v_{d+3}, v_i\}$. We assume that $K_{d+3}^d[V(K_{d+3}^d)\backslash \{v_{d+3}\}]$ has already been embedded in $\mathbb{R}^d$. By Lemma \ref{jordan-Brouwer}, each $K_{d+3}^d[V_i]$ will divide $\mathbb{R}^d$ into two disconnected regions, with one of the regions being empty. We designate the non-empty region as the external region of $K_{d+3}^d[V_i]$ and the empty region as the internal region. Let $In(i)$ be the internal region corresponding to $K_{d+3}^d[V_i]$ and $Ex(i)$ be the external region corresponding to $K_{d+3}^d[V_i]$. (As shown in Figure \ref{new7}, when embedded $K_{d+3}^d$ in $\mathbb{R}^d$, it is obvious that line $v_1v_6$ intersects with $K_{d+3}^d[V_i]$ at least at one point. )
	
	Without loss of generality, we assume that $v_{d+3}$ is in $In(1)$, and it follows that $v_1$ is in $Ex(1)$. Since $K_{d+3}^d$ is a complete $d$-uniform-topological hypergraph, there must exist a simplex whose vertex set includes both $v_1$ and $v_{d+3}$. By Lemma \ref{jordan-Brouwer}, this simplex must intersect with $K_{d+3}^d[V_1]$. 
	Therefore $K_{d+3}^d$ cannot be embedded in $\mathbb{R}^d$.
	
\end{proof}

\begin{lemma}\label{bipa}
	$K_{3,d+1}^d$ is a non-$\mathbb{R}^d$-hypergraph. 
\end{lemma}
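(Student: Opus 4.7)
The plan is to adapt the Jordan--Brouwer strategy of Lemma~\ref{comp}. Write $A=\{a_1,a_2,a_3\}$ and $B=\{b_1,\ldots,b_{d+1}\}$, and set $B'=B\setminus\{b_{d+1}\}$. First I would exhibit a concrete $(d-1)$-sphere sitting inside $K_{3,d+1}^d$ by considering the subhypergraph $H := K_{3,d+1}^d[\{a_1,a_2\}\cup B']$, whose hyperedges are the $(d-1)$-simplexoids $\{a_k\}\cup S$ with $a_k\in\{a_1,a_2\}$ and $S$ a $(d-1)$-subset of $B'$. Since the $(d-1)$-subsets of the $d$-set $B'$ are precisely the facets of the $(d-1)$-simplex on $B'$, their union is the boundary $E\cong S^{d-2}$ of that simplex, and the hyperedges of $H$ form the suspension of $E$ with the two poles $a_1,a_2$. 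Hence $H\cong S^{d-1}$; for $d=2$ this is the 4-cycle $a_1b_1a_2b_2$, for $d=3$ it is the triangular bipyramid depicted in Figure~\ref{new8}, and so on.

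Next, suppose for contradiction that $K_{3,d+1}^d$ embeds into $\mathbb{R}^d$. By Lemma~\ref{jordan-Brouwer}, $H$ separates $\mathbb{R}^d$ into two components, which I will call $In(H)$ and $Ex(H)$. The only two vertices of $K_{3,d+1}^d$ not lying on $H$ are $a_3$ and $b_{d+1}$, so each lies in one of these components. The central claim is that $a_3$ and $b_{d+1}$ must lie on opposite sides of $H$, paralleling the ``$v_{d+3}\in In(1)$ forces $v_1\in Ex(1)$'' step of Lemma~\ref{comp}. Concretely, if both $a_3$ and $b_{d+1}$ lay in $In(H)$, the entire hypergraph would be squeezed into $\overline{In(H)}\cong D^d$, and one could iterate the Jordan--Brouwer argument with an auxiliary bipyramid $H^{\dagger}:=K_{3,d+1}^d[\{a_1,a_3\}\cup((B'\setminus\{b_d\})\cup\{b_{d+1}\})]$ to produce a forced crossing among the hyperedges involving the remaining vertices $a_2$ and $b_d$.

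Granted the opposite-sides placement, the contradiction follows quickly. For any $(d-2)$-subset $S'\subseteq B'$, the set $\{a_3,b_{d+1}\}\cup S'$ is a hyperedge of $K_{3,d+1}^d$, and its 1-sub-simplexoid $\{a_3,b_{d+1}\}$ has endpoints in different components of $\mathbb{R}^d\setminus H$. Hence this simplexoid must meet $H$ at an interior point of the edge $a_3b_{d+1}$. Since no hyperedge of $H$ contains either $a_3$ or $b_{d+1}$, this intersection point does not lie in any sub-simplexoid shared between the hyperedge $\{a_3,b_{d+1}\}\cup S'$ and $H$, which violates the embedding condition in Definition~\ref{special0}.

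The main obstacle, as in Lemma~\ref{comp}, is a rigorous justification of the opposite-sides step. I expect this to be the most technically delicate part of the proof: one must carefully track the relative positions of several vertices against multiple separating spheres, exploiting the solid-bipyramid structure bounded by each sphere together with the constraint that no two distinct simplexoids share any non-face intersection. An alternative and conceptually cleaner route is induction on $d$ using the fact that the vertex link of $b_{d+1}$ in $K_{3,d+1}^d$ is isomorphic to $K_{3,d}^{d-1}$, combined with the observation that a small $(d-1)$-sphere around $b_{d+1}$ in the ambient $\mathbb{R}^d$ would then contain an embedded copy of $K_{3,d}^{d-1}$; puncturing yields an embedding into $\mathbb{R}^{d-1}$, contradicting the inductive hypothesis whose base case $d=2$ is Kuratowski's theorem for $K_{3,3}$.
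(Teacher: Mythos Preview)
Your primary approach---building the bipyramidal $(d-1)$-sphere $H$ on $\{a_1,a_2\}\cup B'$, invoking Jordan--Brouwer, and forcing a crossing through a simplexoid containing both $a_3$ and $b_{d+1}$---is exactly what the paper intends: its proof says only ``similar to Lemma~\ref{comp}'', and the worked $d=3$ example at the end of Section~\ref{last} (Figure~\ref{new13}) uses precisely your sphere $H$ with the two remaining vertices playing the roles of your $a_3$ and $b_{d+1}$. Your flagging of the opposite-sides step as the delicate point is apt and applies equally to the paper's own proof of Lemma~\ref{comp}, which treats the analogous step rather tersely. The link-induction alternative you sketch (the link of $b_{d+1}$ is $K_{3,d}^{d-1}$, so an embedding into $\mathbb{R}^d$ would induce one into a small $S^{d-1}$ and hence into $\mathbb{R}^{d-1}$) is a genuinely different and cleaner route not in the paper; it trades the iterated Jordan--Brouwer bookkeeping for a single inductive reduction with base case $K_{3,3}$.
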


\begin{proof}
	The proof of Lemma \ref{bipa} is similar to that of Lemma \ref{comp}. 
	(Figure \ref{new8} is an example of $K_{3,4}^3$ in $\mathbb{R}^3$. )

	
\end{proof}

\begin{definition}[{\it anti-$d$-dimension minor}]
	If a $d$-uniform-topological hypergraph $G$ has a $K_{d+3}^d$-minor or $K_{3,d+1}^d$-minor, then we call $G$ an {\it anti-$d$-dimension minor}. 
\end{definition}

\subsection{Proof of Theorem \ref{anti-minor}}\label{last}

In view of Lemma \ref{10.33} and Lemma \ref{10.34}, it suffices to prove Theorem \ref{anti-minor} for triangulated $d$-uniform topological hypergraph whose $1$-skeleton is $(d+1)$-connected. 

\begin{proof}
	
	Let $G$ be a triangulated $d$-uniform topological hypergraph. If its minors include either $K_{3,d+1}^d$ or $K_{d+3}^d$, it is obvious that $G$ is a non-$\mathbb{R}^d$-hypergraph by Lemmas \ref{comp}-\ref{bipa}.

	Now we assume that $G$ is a non-$\mathbb{R}^d$-hypergraph, the $1$-skeleton of $G$ (denoted by $G_{sk}^1$) is $(d+1)$-connected, and $G$ is simple. Because all hypergraphs on $(d+2)$ or fewer vertices can be embedded in $\mathbb{R}^d$, we have $|V(G)|\geq d+3$. We proceed by induction on $|V(G)|$. By Lemma \ref{connected}, $G_{sk}^1$ contains a $1$-dimensional simplex $e = xy$ such that the $1$-skeleton of $H= G/e$ is $(d+1)$-connected. If $H$ is a non-$\mathbb{R}^d$-hypergraph, it has an anti-$d$-dimension minor, by induction. Since every minor of $H$ is also a minor of $G$, we deduce that $G$ too has an anti-$d$-dimension minor. So we may assume that $H$ is an $\mathbb{R}^d$-hypergraph. 
	
	Consider an $\mathbb{R}^d$-embedding $H'$ of $H$. Denote by $z$ the vertex of $H$ formed by contracting $e$. Because $H$ is $(d+1)$-connected, by Lemma \ref{sphere} the neighbors of $z$ lie on a $(d-1)$-sphere $S^{d-1}$, the boundary of some polytope $W$ of $H'- z$. Denote by $B_x$ and $B_y$, respectively, the bridges of $W$ in $G \backslash e$ that contain the vertices $x$ and $y$. 
	
	Note that $B_x$ and $B_y$ cannot avoid each other since $G$ is a triangulated $d$-uniform topological hypergraph. It follows that $B_x$ and $B_y$ overlap. By Lemma \ref{bridges}, they are therefore either skew or else equivalent $(d+1)$-bridges. In the latter case, $G$ has a $K_{d+3}^d$-minor; In the former case, $G$ has a $K_{3,d+1}^d$-minor. 
\end{proof}

Note that in higher dimensions, if $B_x$ and $B_y$ in the above proof are skew, the resulting structure becomes less intuitive. 
We present an example in three-dimensional Euclidean space. It is easy to observe that in this case, $B_x$ and $B_y$ together form a $K_{3,4}^3$, which can be analogized to higher-dimensional cases: 

As shown in Figure \ref{new13}(1), simplices $x_1y_1y_2,x_1y_2y_3,x_1y_1y_3$, $x_2y_1y_2,x_2y_2y_3,x_1y_1y_3$ are joined together to form a two-dimensional sphere $S^2$. Vertices $x$ and $y$ are inside $S^2$. 
As shown in Figure \ref{new13}(2), simplices $xy_1y_2,xy_2y_3$ and $xy_1y_3$ form a bridge $B_1$, which effect a partition of $S^2$ into $2$ disjoint segments. 
As shown in Figure \ref{new13}(3), there is another bridge $B_2$ with internal vertex $y$. Its vertics of attachment includes both $x_1$ and $x_2$. By definition, $B_1$ and $B_2$ are skew. On the other hand, since there are no pendant simplex in the hypergraph, bridge $B_2$ must include simplices $\{yx_1y_1, yx_1y_2, yx_1y_3, yx_2y_1, yx_2y_2, yx_2y_3, yy_1y_2, yy_2y_3, yy_1y_3\}$. 
As shown in Figure \ref{new13}(4), let $A=\{x,x_1,x_2\}$ and $B=\{y,y_1,y_2,y_3\}$, At this point, $K_{3,4}^3\subseteq S^2\cup B_1\cup B_2=(A,B)$ is a complete bipartite $3$-uniform-topological hypergraph.

\begin{figure}
	\centering     
	\includegraphics[width=1\linewidth]{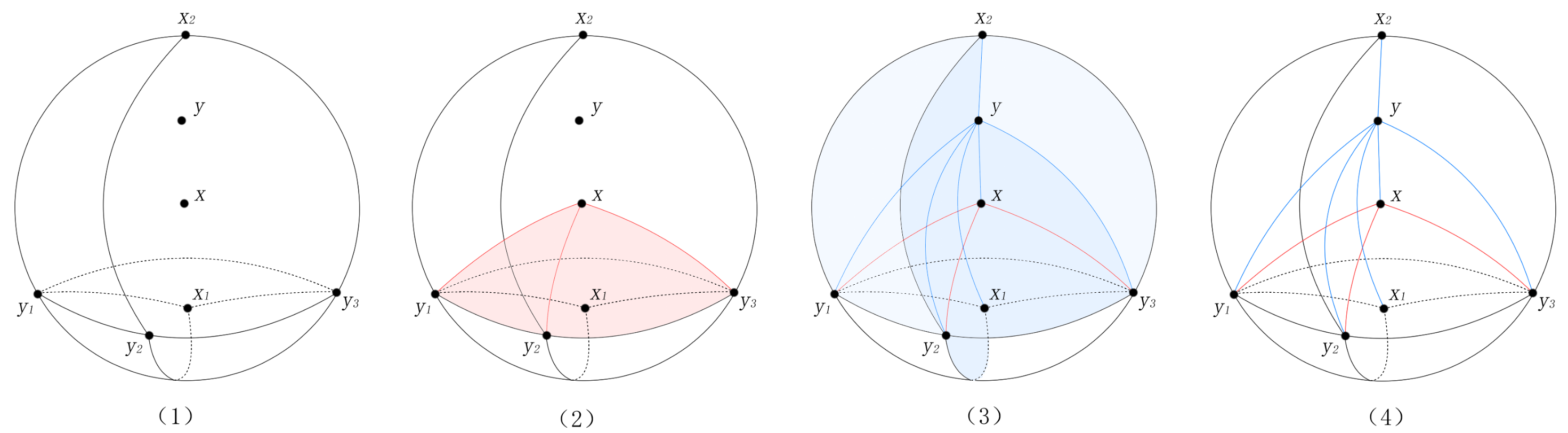}
	\caption{Minimum skew in triangulated $\mathbb{R}^3$-hypergraph ($K_{3,4}^3$-minor).}
	\label{new13}
\end{figure}

\section{Conclusion and outlook}
\label{FW}

The Hadwiger conjecture states that every loopless graph $G$ without a $K_t$ minor satisfies $\chi(G) \leq t-1$. This conjecture has been verified for $1 \leq t \leq 6$. As a generalization of the four color theorem, it remains one of the most significant and challenging open problems in graph theory. Notably, Wagner's theorem and the four color theorem are equivalent to the case $t = 5$ of the Hadwiger Conjecture. 
Given the conjecture's inherent complexity, we attempt to describe the structure of hypergraphs through high-dimensional embeddings, aiming to establish a connection between the chromatic number of a hypergraph and its embeddability in higher-dimensional spaces. 
Crucially, we observe that in Theorem~\ref{anti-minor}, the hypergraph $K_{d+3}^d$ has a $1$-skeleton isomorphic to the complete graph $K_{d+3}$---precisely the structure central to the Hadwiger Conjecture. To leverage high-dimensional embeddings for studying this conjecture, the following objectives must be addressed:
\begin{itemize}
	\item Develop a method for converting graphs into hypergraphs, thereby establishing an embedding theory for graphs in higher-dimensional Euclidean spaces.
	\item Construct a coloring theory for high-dimensional Euclidean spaces, generalizing the four color theorem. Specifically, we aim to prove that if a graph $G$ embeds into $\mathbb{R}^d$, then $\chi(G) \leq d - 1$.
\end{itemize}
Our future research will focus primarily on these directions. Resolving these problems will enable the application of Theorem~\ref{anti-minor} to the Hadwiger Conjecture, offering a fundamentally new framework for its study.

\section*{Acknowledgement}
This work was funded by the National Key R \& D Program of China (No.~2022YFA1005102) and the National Natural Science Foundation of China (Nos.~12325112, 12288101).

\section*{Statements and Declarations}

\begin{itemize}
	\item Competing interests: We hereby declare that there are no conflicts of interest to disclose in relation to this submission. We have no affiliations, relationships, or financial interests that could be perceived as influencing my work.
	\item Data availability: This research does not involve any data generation or analysis. Therefore, no data are available. 
\end{itemize}




\end{document}